\newcommand{\R}{\mathbb{R}}
\newcommand{\C}{\mathbb{C}}
\newcommand{\N}{\mathbb{N}}
\newcommand{\ve}{\varepsilon}
\DeclareMathOperator{\tr}{tr}
\DeclareMathOperator{\maxroot}{maxroot}
\DeclareMathOperator{\minroot}{minroot}
\newcommand{\E}[2]{\mathbb{E}_{#1}\left[ #2 \right] }
\renewcommand{\P}[2]{\mathbb{P}_{#1}\left( #2 \right)}
\newcommand{\var}[1]{\mathbf{Var}\left[ #1 \right] }
\renewcommand{\Im}{\operatorname{Im}}
\newcommand{\bI}{\mathbf I}
\numberwithin{equation}{section}
\theoremstyle{plain} 
\newtheorem{theorem}{Theorem}[section]
\newtheorem*{theorem*}{Theorem}
\newtheorem{corollary}[theorem]{Corollary}
\newtheorem{lemma}[theorem]{Lemma}
\newtheorem{proposition}[theorem]{Proposition}
\theoremstyle{definition}
\newtheorem{definition}[theorem]{Definition}
\theoremstyle{remark}
\newtheorem{example}[theorem]{Example}
\begin{document}

\title{On Akemann-Weaver Conjecture}

\author{Marcin Bownik}

\address{Department of Mathematics, University of Oregon, Eugene, OR 97403--1222, USA}
\email{mbownik@uoregon.edu}

\date{\today}

\keywords{matrix discrepancy, interlacing polynomials, Lyapunov theorem, Kadison-Singer problem, frames, trace class operators}

\subjclass[2000]{Primary: 05A99, 11K38, 42C15, Secondary: 28B05, 46C05, 47B15}

\thanks{The author was supported in part by the NSF grant DMS-1956395. The author thanks Chris Phillips and Nikhil Srivastava for useful discussions on the subject of this paper.}

\begin{abstract}
Akemann and Weaver \cite{AW} showed Lyapunov-type theorem for rank one positive semidefinite matrices which is an extension of Weaver's KS$_2$ conjecture \cite{We} that was proven by Marcus, Spielman, and Srivastava \cite{MSS} in their breakthrough solution of the Kadison-Singer problem \cite{KS}. They conjectured   that a similar result holds for higher rank matrices. We prove the conjecture of Akemann and Weaver  by establishing Lyapunov-type theorem for trace class operators. In the process we prove a matrix discrepancy result for sums of hermitian matrices. This extends rank one result of Kyng, Luh, and Song \cite{KLS} who established an improved bound in Lyapunov-type theorem of Akemann and Weaver. 
\end{abstract}

\maketitle

\section{Introduction}

The solution of the Kadison-Singer problem by Marcus, Spielman, and Srivastava \cite{MSS} has resolved in one stroke several outstanding problems in analysis. This is due to the fact that the Kadison-Singer problem \cite{KS} was known to be equivalent to several well-known problems such as Anderson paving conjecture \cite{AA, And, CE, CEKP}, Bourgain-Tzafriri restricted invertibility conjecture \cite{BT1, BT2, BT3}, Feichtinger's conjecture \cite{BS, CCLV}, and Weaver's conjecture \cite{We}. We refer to the surveys \cite{CFTW, CT} and the papers \cite{MB, CT2} discussing the solution of the Kadison-Singer problem and its various ramifications.

Marcus, Spielman, and Srivastava resolved the Kadison-Singer problem by proving 
Weaver's KS$_2$ conjecture.
For any vectors $u_1,\ldots,u_m \in \C^d$ such that
\begin{equation}\label{i1}
\bigg\| \sum_{i=1}^m u_i u_i^* \bigg\| \le 1 \qquad\text{and}\qquad
\max_i ||u_i||^2 \le \ve
\end{equation}
there exists a partition of $[m]$ into sets $I_1$ and $I_2$ such that
\[
\bigg\| \frac{1}{2} \sum_{i=1}^m u_i u_i^* - \sum_{i\in I_k} u_i u_i^* \bigg\| \le O(\sqrt{\ve}) 
\qquad k=1,2.
\]
Based on this development Akemann and Weaver \cite{AW} have shown an interesting variant of classical Lyapunov's theorem on ranges of non-atomic finitely dimensional vector-valued measures in the setting of discrete frames. A similar result in the setting of continuous non-atomic frames, which was shown by the author \cite{MB2, MB3}, does not rely on the solution of the Kadison-Singer problem. Akemann and Weaver's result is a generalization of KS$_2$ conjecture for vectors satisfying \eqref{i1}.
For any $0 \le t_i \le 1$, $i\in [m]$, there exists a set $I_0 \subset [m]$ such that
\begin{equation}\label{i3}
\bigg\| \sum_{i=1}^m t_i u_i u_i^* - \sum_{i\in I_0} u_i u_i^* \bigg\| \le O(\ve^{1/8}).
\end{equation}
Recently, Kyng, Luh, and Song \cite{KLS} have greatly improved the $\epsilon$ dependence and provided a small explicit constant in \eqref{i3} by showing that $O(\ve^{1/8})$  can be replaced by $2 \sqrt{\ve}$. This result was further refined in \cite{XXZ1} by showing that \eqref{i3} holds with a smaller multiplicative constant $\tfrac 32 \sqrt{\ve}$. 

Akemann and Weaver \cite{AW} have also conjectured a generalization of their Lyapunov-type theorem for higher rank positive matrices. We show the validity of their conjecture not only for matrices in finitely dimensional spaces, but also for trace class operators in a separable Hilbert space. Let $\{T_i\}_{i\in I}$ be any family of positive trace class operators such that
\begin{equation}\label{i4}
\sum_{i\in I} T_i \le \bI \quad\text{and}\quad \tr(T_i)\le \epsilon \qquad\text{for all }i \in I.
\end{equation}
Then for any $0\le t_i \le 1$, $i\in I$, there exists a subset $I_0 \subset I$ such that
\begin{equation}\label{i5}
\bigg\| \sum_{i\in I} t_i T_i  - \sum_{i\in I_0} T_i  \bigg\| \le 2 \sqrt{\epsilon}.
\end{equation}
When all operators $T_i$ have rank $1$, our result recovers Lyapunov-type theorem of Akemann and Weaver \cite{AW} with the explicit bound shown Kyng, Luh, and Song \cite[Corollary 1.8]{KLS}.

To establish our main result we show a matrix discrepancy bound. Weaver's KS$_2$ result can be equivalently stated as a discrepancy statement for sums of hermitian matrices. Under the assumption \eqref{i1}, there exists signs $\ve_i = \pm 1$ such that
\begin{equation}\label{i6}
\bigg\| \sum_{i=1}^m \ve_i u_i u_i^* \bigg\| \le O(\sqrt{\ve}).
\end{equation}
The main result of Kyng, Luh, and Song \cite[Theorem 1.4]{KLS} yields the following extension of \eqref{i6} for biased choice of signs. 

\begin{theorem}\label{klsone}
Let $\xi_1,\ldots, \xi_m$ be any independent scalar random variables with finite support. For $u_1,\ldots,u_m \in \C^m$ define
\[
\sigma^2= \bigg\| \sum_{i=1}^m \var{\xi_i}(u_i u_i^*)^2 \bigg\|.
\]
Then there is an outcome such that
\[
\bigg\| \sum_{i=1}^m \E{}{\xi_i} u_i u_i^* - \sum_{i=1}^m \xi_i u_i u_i^* \bigg\| \le 4 \sigma.
\]
\end{theorem}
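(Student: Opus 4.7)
The plan is to follow the interlacing polynomials framework of Marcus, Spielman, and Srivastava \cite{MSS}, adapted so that the expected characteristic polynomial records the variances rather than the means of the $\xi_i$.

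\smallskip
\noindent\textbf{Step 1 (Centering and doubling).} Set $\eta_i := \xi_i - \EV[\xi_i]$, so the $\eta_i$ are independent, mean zero, finitely supported, with $\EV[\eta_i^2]=v_i:=\var{\xi_i}$. The random Hermitian matrix $B_\xi := \sum_i \eta_i u_iu_i^*$ satisfies $\|B_\xi\|=\maxroot(p_\xi)$, where
\[
p_\xi(x) := \det(xI - B_\xi)\det(xI+B_\xi) = \det(x^2 I - B_\xi^2).
\]
The target becomes: find an outcome $\xi^*$ with $\maxroot(p_{\xi^*})\le 4\sigma$.

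\smallskip
\noindent\textbf{Step 2 (Interlacing family).} Each factor $\det(xI\mp B_\xi)$ is linear in every $\eta_i$ by the matrix determinant lemma, so $p_\xi(x)$ has degree at most two in each $\eta_i$. Organize the outcomes of $\xi$ in a rooted tree by conditioning on $\xi_1,\ldots,\xi_k$ at depth $k$, and associate to each node the corresponding conditional expectation of $p_\xi$. To verify the interlacing property at each level, I would introduce independent dummy variables $z$ and $w$ for the two factors and work with $\det(xI-B_z)\det(xI-B_w)$, which \emph{is} jointly real-stable in $(x,z,w)$; preservation of real-stability under the relevant differential operators and suitable specializations then produces a common interlacer at every level. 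The interlacing-families theorem yields an outcome $\xi^*$ with $\maxroot(p_{\xi^*})\le \maxroot(\EV[p_\xi])$.

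\smallskip
\noindent\textbf{Step 3 (Variance-weighted expectation).} Since $\EV[\eta_i]=0$ and $\EV[\eta_i^2]=v_i$, expanding the two determinants as multilinear polynomials in the $\eta_i$ via Cauchy--Binet and pairing identical monomials gives
\[
\EV[p_\xi(x)] = \sum_{S\subset [m]}(-1)^{|S|}\,x^{2(d-|S|)}\,\Bigl(\prod_{i\in S}v_i\Bigr)\bigl(\det(U_S^* U_S)\bigr)^2,
\]
where $U_S$ is the matrix with columns $\{u_i\}_{i\in S}$. Equivalently, as a shift-operator identity,
\[
\EV[p_\xi(x)] = \prod_{i=1}^m\Bigl(1+\tfrac{v_i}{2}\partial_{z_i}^2\Bigr)\,\det(xI - B_z)\det(xI + B_z)\,\Big|_{z=0},
\]
exhibiting the expected polynomial as variance-weighted second-order shifts applied to a multivariate polynomial whose real-stability, modulo the subtlety of Step 2, is controlled.

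\smallskip
\noindent\textbf{Step 4 (Barrier bound).} Adapt the multivariate barrier method of MSS to bound $\maxroot(\EV[p_\xi])$. The crucial change is that the first-order shifts $1-\partial_{z_i}$ in MSS are replaced by the second-order shifts $1+\tfrac{v_i}{2}\partial_{z_i}^2$, which move the upper barrier in the variable $z_i$ by an amount proportional to $\sqrt{v_i}$ times the local slope. Summing the per-coordinate contributions produces a cumulative shift in the barrier in $x$ that is governed by the operator norm of $\sum_i v_i(u_iu_i^*)^2$, i.e.\ by $\sigma^2$. A careful balancing of the second-order barrier increments against this budget yields the desired $\maxroot(\EV[p_\xi])\le 4\sigma$.

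\smallskip
\noindent\textbf{Main obstacle.} The genuine new ingredient is Step 4: one must prove a second-order barrier shift lemma of the form ``if $p$ is real-stable with upper barrier at $z$ in variable $w$ and barrier value at most $\phi$, then $(1+v\partial_w^2)p$ has upper barrier at $z+\delta(v,\phi)$ with $\delta$ satisfying an explicit quadratic inequality in $v\phi^2$,'' and then iterate it while tracking the barrier in $x$ to obtain the constant $4$. A secondary difficulty is Step 2: the polynomial $\det(xI-B_z)\det(xI+B_z)$ is not jointly real-stable in $(x,z)$ as it stands, so one cannot naively apply the MSS stability lemmas; separating the two factors into independent dummy variables and specializing only at the end is the required workaround.
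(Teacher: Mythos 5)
Your plan follows essentially the same route as the paper (which is the Kyng, Luh, Song argument that the paper extends to higher rank): form the product $\det(xI-B_\xi)\det(xI+B_\xi)$, show it lies in an interlacing family by keeping the two determinant factors in \emph{separate} dummy variables so that the underlying multivariate polynomial is real stable, compute that the expected polynomial is a variance-weighted image of that stable polynomial under second-order differential operators, and finish with a multivariate barrier argument. The one substantive misconception is your claim that the second-order barrier-shift lemma in Step 4 is a ``genuine new ingredient'' that you would need to prove. It is a known result of Anari and Oveis Gharan \cite[Lemma 4.8]{AG}, reproved with a rescaling in \cite[Lemma A.1]{KLS}, and restated as Lemma~\ref{AG} in this paper; once you have it, iterating with $c=1/\sqrt{2}$ (Corollary~\ref{AG10}) and balancing the parameters $\alpha=4$, $t=2$ in the barrier budget produces the constant $4$. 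So that step is available off the shelf rather than an obstruction.

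On Step 3: the identity $\E{}{p_\xi}=\prod_i\bigl(1+\tfrac{v_i}{2}\partial_{z_i}^2\bigr)Q(x,z)\big|_{z=0}$ with $Q(x,z)=\det(xI-B_z)\det(xI+B_z)$ is correct in the rank-one case because the determinants are multilinear in $z$, so the $z_i^2$ coefficient of $Q$ equals $-1$ times the $z_iw_i$ coefficient of $R(x,z,w)=\det(xI+B_z)\det(xI+B_w)$. But, as you note at the end, $Q$ itself is not real stable in $(x,z)$, so the stability that feeds the barrier argument has to come from $R$, on which the stability-preserving operator is $1-v_i\partial_{z_i}\partial_{w_i}$ (Lemma~\ref{spd}, Definition~\ref{dmcp2}). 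The paper therefore keeps the two factors in separate variables throughout; the Helton--Vinnikov comparison in Lemma~\ref{qz} is only needed when the $A_i$ can have rank $\ge 2$, where the $z_i^2$ and $z_iw_i$ coefficients of $R$ no longer agree up to sign. For your rank-one statement that bridge lemma is unnecessary, and your outline, completed along these lines, reproduces the paper's argument for this theorem.
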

We show that the rank one assumption in the above result can be dropped. Moreover, the requirement that matrices are positive semidefinite can be relaxed by hermitian assumption albeit with a slightly worse constant, see Corollary \ref{klsh}.

\begin{theorem}\label{kls}
Suppose that $\xi_1,\ldots, \xi_m$ are jointly independent scalar random variables which take finitely many values.  Let $A_1, \ldots, A_m$ be $d \times d$ positive semidefinite matrices. Define 
\begin{equation}\label{kls1}
\sigma^2= \max \bigg( \max_{i=1,\ldots,m} \var{\xi_i} \tr(A_i)^2, \bigg\| \sum_{i=1}^m \var{\xi_i} \tr(A_i) A_i \bigg\| \bigg).
\end{equation}
Then, there is an outcome such that
\begin{equation}\label{kls2}
\bigg\| \sum_{i=1}^m (\xi_i-\E{}{\xi_i}) A_i \bigg\| \le 4 \sigma.
\end{equation}
\end{theorem}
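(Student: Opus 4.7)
My plan is to follow the method of interlacing polynomials of Marcus--Spielman--Srivastava, as refined by Kyng, Luh, and Song in the rank-one case, and to extend each step so that it accommodates arbitrary positive semidefinite summands. The key observation will be that both halves of the MSS argument---real-stability and barrier control---extend naturally, provided one systematically replaces scalar quantities of the form $u^\ast M u$ by traces of the form $\tr(M A_i)$.

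After centering so that $\E{}{\xi_i}=0$, it is enough to bound $\maxroot$ (and symmetrically $-\minroot$) of $X = \sum_i \xi_i A_i$. I would form the conditional expected characteristic polynomials
\[
q_{b_1,\ldots,b_k}(x) = \E{}{\det(xI - X) \mid \xi_1 = b_1, \ldots, \xi_k = b_k}
\]
indexed by the tree of partial assignments of the $\xi_i$. First I would verify that these form an interlacing family: at each internal node, a convex combination of its children (indexed by the support of the single variable $\xi_{k+1}$) is real-rooted. This will follow from real-stability of $\det(xI + \sum_i z_i A_i)$ in $(x,z_1,\ldots,z_m)$, which holds for arbitrary positive semidefinite $A_i$, together with the fact that averaging a polynomial in one $z_i$ over the finite support of $\xi_i$ preserves real-stability in that variable. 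Consequently some outcome realizes a $\maxroot$ no larger than that of the fully averaged polynomial $\E{}{\det(xI - X)}$.

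Next I would bound $\maxroot\bigl(\E{}{\det(xI - X)}\bigr) \le 4\sigma$ by adapting the multivariate barrier-function machinery. Starting from $x^d$ at $z=0$ and incorporating the $\xi_i$ one at a time, I maintain a barrier $\Phi(x)$ above the running maxroot and quantify its shift when one random summand is absorbed. The rank-one identity $\det(xI-B-uu^\ast)=\det(xI-B)(1-u^\ast(xI-B)^{-1}u)$ used in the MSS/KLS arguments must be replaced by Sylvester's determinant identity $\det(xI-B-A_i)=\det(xI-B)\det(I - (xI-B)^{-1} A_i)$, and I would then expand $\tr\log(I-(xI-B)^{-1} A_i)$ to second order. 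The two resulting types of second-order contributions, namely $\tr((xI-B)^{-1}A_i)^2$ and $\tr((xI-B)^{-2}A_i^2)$, are precisely what produces the pair of quantities $\max_i \var{\xi_i}\tr(A_i)^2$ and $\|\sum_i \var{\xi_i}\tr(A_i) A_i\|$ appearing in $\sigma^2$, after weighting by $\var{\xi_i}$ and aggregating over $i$.

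The main technical obstacle I anticipate is this second step: controlling both trace contributions simultaneously by a single shift of the barrier parameter, and ensuring that, after aggregation over independent $\xi_i$, the resulting bound assembles to the sharp constant $4$ rather than a larger one. This will require a higher-rank analogue of the scalar inequality that Kyng--Luh--Song use to balance a barrier drop against the sum of its squares, which I would carry out by a convexity-based optimization of the shift parameter at each stage. Applying the combined argument to both $X$ and $-X$ and invoking the interlacing conclusion then yields the desired inequality \eqref{kls2}.
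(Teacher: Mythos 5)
Your proposal rests on forming the conditional expected characteristic polynomials $q_{b_1,\ldots,b_k}(x) = \E{}{\det(xI-X)\mid \xi_1=b_1,\ldots,\xi_k=b_k}$ and verifying that they form an interlacing family via real-stability of $\det(xI+\sum_i z_i A_i)$. This breaks for rank greater than one. For rank-one $A_i$, the determinant $\det(xI-\sum_i\xi_i A_i)$ is \emph{affine} in each $\xi_i$, so averaging over the support of $\xi_i$ is the same as restricting to the single real value $\E{}{\xi_i}$, which preserves real-rootedness. For higher rank the determinant has degree $\rank(A_i)$ in $\xi_i$, and convex combinations of real-rooted polynomials are not real-rooted in general. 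Concretely, take $d=2$, $m=1$, $A_1=I_2$, and $\xi_1=\pm 1$ with probability $\tfrac12$ each (already centered); then $\E{}{\det(xI-\xi_1 A_1)}=\tfrac12(x-1)^2+\tfrac12(x+1)^2=x^2+1$, which is not real-rooted, so $\maxroot\E{}{\det(xI-X)}$ is not even defined. The paper avoids this by never averaging the raw characteristic polynomial: it works with the \emph{mixed} characteristic polynomial $\mu[\ve_1 A_1,\ldots,\ve_m A_m]$, which by Lemma~\ref{mas} is multiaffine in its arguments, so that averaging over the $\xi_i$ is exact substitution; real-rootedness then comes from Lemma~\ref{crs}, and $\mu$ dominates the true maxroot via Lemma~\ref{norm} rather than equaling it.

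A second, independent gap is your plan to apply the interlacing conclusion ``to both $X$ and $-X$'' to control both the largest and smallest eigenvalues. These are two separate applications, each of which produces an outcome with positive probability; nothing guarantees a \emph{single} outcome achieving both bounds, which is what \eqref{kls2} needs. The paper handles this with the product polynomial $f_{\ve}(x)=\mu[\ve A](x)\mu[-\ve A](x)$: Theorem~\ref{qint} shows this product forms an interlacing family (using the stability-preserving operator $1-\partial_{z_i}\partial_{w_i}$, not two separate first-order operators), and Lemma~\ref{norm} shows its maxroot dominates $\|\sum_i\ve_i A_i\|$ directly, so one interlacing step controls both tails simultaneously. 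Your barrier sketch (second-order expansion of $\tr\log(I-(xI-B)^{-1}A_i)$) is also detached from any real-rooted polynomial object once the first gap is in place; the paper's barrier estimate instead runs through the quadratic mixed characteristic polynomial $\mu_2$, the Anari--Oveis Gharan operator $1-\tfrac12\partial_z^2$, and the Helton--Vinnikov comparison in Lemma~\ref{qz}.
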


To see that Theorem \ref{kls} implies Theorem \ref{klsone}, suppose that all matrices $A_i$ have rank $1$. Then we have $\tr(A_i)=||A_i||$, $\tr(A_i)A_i =A_i^2$, and the formula \eqref{kls1} reduces to
\[
\sigma^2= \ \bigg\| \sum_{i=1}^m \var{\xi_i} A_i^2 \bigg\|.
\]
Thus, Theorem \ref{kls} is a generalization of rank one result Theorem \ref{klsone}. 

As observed in \cite[Remark 1.5]{KLS}, the full strength of Theorem \ref{kls} is achieved for random variables $\xi_i$ taking only two values. Indeed, one can replace random variables $\xi_i$ by alternative random variables $\xi_i'$ that have identical means and take only two values. By shifting the probability mass, we can arrange that $\xi_i'$ takes the nearest value of $\xi_i$ above and below the mean $\E{}{\xi_i}=\E{}{\xi'_i}$, while reducing the variance. This also reduces the corresponding value $\sigma^2$ in \eqref{kls1} yielding a better estimate \eqref{kls2}.

Theorem \ref{kls} can be contrasted with the well-known bounds for sums of hermitian matrices. Let $\xi_i$, $i\in [m]$ are independent Rademacher random variables (taking values $\pm 1$ with probability $\tfrac 12$). Let $A_1, \ldots, A_m$ be $d \times d$ hermitian matrices. Theorem due to Oliveira \cite[Theorem 1.2]{Tro} states that for $t>0$,
\[
\P{}{\bigg\| \sum_{i=1}^m \xi_i A_i \bigg\| \ge t \sigma } \le 2d e^{-t^2/2},
\qquad\text{where }  \sigma^2=\bigg\| \sum_{i=1}^m  A_i^2 \bigg\|.
\]
Hence, with high probability
\[
\bigg\| \sum_{i=1}^m \xi_i A_i \bigg\| = O(\sigma\sqrt{\log d}) .
\]
This implies a bound of $O(\sigma\sqrt{\log d})$ on the matrix discrepancy. In general, one can not remove the dimensional factor $\sqrt{\log d}$, see \cite[Proposition 1.7]{XXZ1}. In fact, even if matrices $A_1,\ldots, A_m$ are diagonal with non-negative entries 
$\sum_{i=1}^m A_i = \mathbf I$ and their norms are small $\max_i ||A_i|| \le \ve$, then the matrix discrepancy could be larger than $ O(\sigma)$, where
\[
\sigma^2 = \bigg\| \sum_{i=1}^m  A_i^2 \bigg\|  \le \bigg\| \sum_{i=1}^m  ||A_i||A_i \bigg\|  \le\ve.
\]
This is due to the result of Akemann and Anderson \cite{AA2} on the optimality of continuous Beck-Fiala theorem. Instead, Akemann and Weaver \cite{AW} conjectured that the discrepancy is at most $O(\sqrt{\ve})$ for positive matrices with small traces $\max_i \tr(A_i) \le \ve$. This is a consequence of Theorem \ref{kls}, which imbeds traces of matrices in the definition of $\sigma$, thus getting rid of dimensional dependence on $d$.

The proof of our main result requires developing some new properties of mixed characteristic polynomials and barrier function estimates which were originally introduced in \cite{MSS}. In Section \ref{S2} we show that a mixed characteristic polynomial $\mu[A_1,\ldots, A_m]$ is real-rooted if matrices $A_i$ are either positive or negative semidefinite despite that real-rootedness usually fails for hermitian matrices. We also show the monotonicity of maximum root of $\mu[A_1,\ldots, A_m]$ with respect to partial order $\le$ on hermitian matrices. As a consequence we establish the key bound on sums of positive definite matrices with signs $\ve_i=\pm1$.
\begin{equation}\label{i10}
\bigg\| \sum_{i=1}^m \ve_i A_i \bigg\| \le \maxroot( \mu[\ve_1 A_1,\ldots, \ve_m A_m] \cdot  \mu[-\ve_1 A_1,\ldots, -\ve_m A_m]).
\end{equation}
This enables us to simultaneously control both the the largest and the smallest eigenvalues of the sum  $\sum_{i=1}^m \ve_i A_i$ by a single polynomial. More importantly, using stability preserving results of Borcea and Br\"and\'en \cite{BB}, we show that product polynomials in \eqref{i10} form an interlacing family of polynomials \cite{MSS0, MSS}. Consequently, the proof of Theorem \ref{kls} reduces to the bound on the largest root of the expected polynomial
\[
\E{}{\mu[\xi_1 A_1,\ldots, \xi_m A_m] \cdot  \mu[-\xi_1 A_1,\ldots, -\xi_m A_m]}.
\]
In turn, this involves a quadratic variant of a mixed characteristic polynomial. In Section \ref{S3} we establish the bound on the largest root of a quadratic mixed characteristic polynomial which is analogous to \cite[Theorem 5.1]{MSS}. This requires multivariate barrier argument that is adapted to the stability preserving second order differential operator $1-\partial_z\partial_w$ rather than first order operator $1-\partial_z$ employed in \cite{MSS}. We use an indirect approach using the barrier method for the second order operator $1-\tfrac 12 \partial^2_z$, which was originally introduced by Anari and Oveis Gharan \cite{AG} and employed by Kyng, Luh, and Song \cite{KLS}. Finally, in Section \ref{S4} we show Lyapunov-type theorem for positive trace class operators, thus confirming the conjecture of Akemann and Weaver.

\section{Properties of mixed characteristic polynomials}\label{S2}

The main goal of this section is to show several results involving a mixed characteristic polynomial, which was introduced by Marcus, Spielman, and Srivastava \cite{MSS} in their solution of the Kadison-Singer problem. We also
show a strengthening of their main probabilistic result on random positive semidefinite matrices. The only difference between Theorem \ref{thmp} and \cite[Theorem 1.4]{MSS} is that we drop the rank one assumption. This result was independently announced by Michael Cohen \cite{Cohen} at the Banff workshop on ``Algebraic and Spectral Graph Theory", who prematurely passed away in 2017. An extension of Theorem \ref{thmp} in the realm of hyperbolic polynomials was shown by Br\"and\'en \cite[Theorem 6.1]{Bra} with more precise bounds depending on the expected ranks of random matrices $X_i$. Our auxiliary goal is to present a simple proof of Theorem \ref{thmp}, which requires a minimal number of modifications to the original proof in \cite{MSS}.

\begin{theorem}\label{thmp}
Let $\epsilon > 0$. Suppose that $X_1, \dots, X_m$ are jointly independent $d\times d$ positive semidefinite random matrices, which take finitely many values and satisfy  
\begin{equation}\label{thmp1}
\sum_{i=1}^m \E{}{X_{i} } \le \bI
\qquad\text{and}\qquad
\E{}{ \tr X_i } \leq \epsilon \quad\text{for all } i.
\end{equation}
Then,
\begin{equation}\label{thmp2}
\P{}{\bigg\| \sum_{i=1}^m X_i \bigg\| \leq (1 + \sqrt{\epsilon})^2} > 0.
\end{equation}
\end{theorem}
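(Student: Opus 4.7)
\medskip
\noindent\textbf{Proof plan.} The proof mirrors the three-step strategy of Marcus--Spielman--Srivastava, with the higher-rank modifications concentrated in the polynomial identity and the real-rootedness results to be developed in Section~\ref{S2}. Let $X_i$ take values $A_i^{(k)}$ with probabilities $p_i^{(k)}$, and set $A_i := \EV X_i$.

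The first step is to set up an interlacing family whose leaves are indexed by outcomes $\omega = (k_1, \dots, k_m)$. Rather than use the characteristic polynomial $\chi_\omega(x) = \det(xI - \sum_i A_i^{(k_i)})$ directly (whose expectation, unlike in the rank-one case, need not be real-rooted), I take the leaves to be
\[
 \tilde p_\omega(x) := \mu\bigl[A_1^{(k_1)}, \dots, A_m^{(k_m)}\bigr](x).
\]
Because $\prod_i(1-\partial_{z_i})|_{z=0}$ extracts only the constant and linear-in-$z_i$ coefficients of $\det(xI + \sum_j z_j A_j)$, and the latter is linear in $A_i$, the polynomial $\mu$ is \emph{multi-affine} in its matrix arguments. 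By joint independence of the $X_i$,
\[
 \EV \tilde p_\omega = \mu[A_1,\dots,A_m], \qquad \EV\bigl[\tilde p_\omega \mid X_1,\dots,X_j\bigr] = \mu\bigl[A_1^{(k_1)},\dots,A_j^{(k_j)}, A_{j+1},\dots,A_m\bigr].
\]
All arguments of $\mu$ here are PSD (convex combinations of PSD matrices), so by Section~\ref{S2} each conditional expectation is real-rooted, as is any convex combination of sibling polynomials. This equips $\{\tilde p_\omega\}$ with an interlacing family structure whose root polynomial is $\mu[A_1,\dots,A_m]$, and the interlacing family theorem of \cite{MSS0, MSS} yields an outcome $\omega^*$ of positive probability with $\maxroot \tilde p_{\omega^*} \le \maxroot \mu[A_1,\dots,A_m]$.

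Two further ingredients complete the argument. First, the monotonicity in Section~\ref{S2} relating $\chi$ and $\mu$ for PSD inputs gives
\[
 \Big\| \sum_i B_i \Big\| = \maxroot \chi\Big[\sum_i B_i\Big] \le \maxroot \mu[B_1, \dots, B_m] \quad \text{for PSD } B_i;
\]
applied at $B_i = A_i^{(k_i^*)}$, this yields $\|\sum_i X_i(\omega^*)\| \le \maxroot \tilde p_{\omega^*}$. Second, the multivariate barrier method of \cite{MSS} bounds $\maxroot \mu[A_1,\dots,A_m] \le (1+\sqrt\epsilon)^2$: starting from the barrier point $((1+\sqrt\epsilon)^2, z_*, \dots, z_*)$ with $z_* = \sqrt\epsilon/(1+\sqrt\epsilon)$, each application of $(1-\partial_{z_i})$ preserves the above-root property under the hypotheses $\sum_i A_i \le \bI$ and $\tr A_i \le \epsilon$. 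Because these barrier inequalities depend only on traces and on $\sum_i A_i$, not on the ranks of individual $A_i$, the MSS computation transfers without change. Combining everything produces $\|\sum X_i(\omega^*)\| \le (1+\sqrt\epsilon)^2$, establishing \eqref{thmp2}.

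The main obstacle is not the barrier bound itself, which is essentially MSS's rank-one computation, but rather the foundational properties of $\mu$ for higher-rank PSD matrices: real-rootedness, multi-affinity, and the comparison $\|\sum B_i\| \le \maxroot \mu[B_1,\dots,B_m]$. These are precisely the ``minimal modifications'' to the MSS proof promised in the introduction, and their proofs are the content of Section~\ref{S2}.
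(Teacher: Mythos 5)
Your proposal follows the paper's proof structure essentially step for step: multi-affinity of $\mu$ gives $\EV[\mu[X_1,\dots,X_m]] = \mu[\EV X_1,\dots,\EV X_m]$, real-rootedness for PSD arguments makes this an interlacing family, the norm bound $\|\sum_i B_i\| \le \maxroot \mu[B_1,\dots,B_m]$ converts the maxroot bound into an operator norm bound, and the MSS barrier computation (which indeed depends only on traces and on $\sum_i A_i$) supplies $\maxroot \mu[A_1,\dots,A_m]\le(1+\sqrt\epsilon)^2$. Two tiny inaccuracies do not affect the argument: your stated barrier starting point $z_* = \sqrt\epsilon/(1+\sqrt\epsilon)$ is not the one MSS use, and the comparison $\|\sum_i B_i\| \le \maxroot\mu[B_1,\dots,B_m]$ is established in the paper not by a monotonicity statement relating $\chi$ and $\mu$ but by an above-the-roots argument (Lemma~\ref{con} and Lemma~\ref{abp} feeding into Lemma~\ref{norm}).
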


\subsection{Mixed characteristic polynomial} As in \cite{MSS}, the proof of Theorem \ref{thmp} involves the concept of a mixed characteristic polynomial.

\begin{definition} \label{dmcp}
Let $A_1, \ldots, A_m$ be $d\times d$ matrices. The mixed characteristic polynomial is defined for $z\in \C$ by
\[
\mu[A_1,\ldots,A_m](z) =  \bigg(\prod_{i=1}^m (1 - \partial_{z_i}) \bigg) \det \bigg( z \mathbf I + \sum_{i=1}^m z_i A_i \bigg)\bigg|_{z_1=\ldots=z_m=0}.
\]
\end{definition}

By determinant expansion one can show that $\det \bigg( z \mathbf I + \sum_{i=1}^m z_i A_i \bigg)$ is  a polynomial in $\C[z,z_1,\ldots,z_m]$ of degree $\le d$ in each variable. Hence, $\mu[A_1,\ldots,A_m](z)$, like characteristic polynomial, is a monic polynomial in $\C[z]$ of degree $d$. We need to employ a few basic properties of $\mu$. The following lemma is implicitly stated in Tao's exposition \cite{Tao} and it is shown explicitly in the author's survey on the Kadison-Singer problem \cite[Lemma 3.2]{MB}.

\begin{lemma}\label{mas} For a fixed $z\in \C$, the mixed characteristic polynomial mapping 
\[
\mu: M_{d\times d}(\C) \times \ldots\times  M_{d \times d}(\C) \to \C
\]
 is multi-affine and symmetric. That is, $\mu$ affine in each variable and its value is the same  for any permutation of its arguments $A_1,\ldots,A_m$.
\end{lemma}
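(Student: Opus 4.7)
The plan is to expand $\det(z\mathbf I + \sum_i z_i A_i)$ as a polynomial in $z_1,\ldots,z_m$, identify the precise joint dependence of each coefficient on the matrices $A_i$, and then observe how the operator $\prod_i(1-\partial_{z_i})$ followed by evaluation at $z_1=\cdots=z_m=0$ sifts out only the portions that are at most linear in each $A_i$.

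First I would write
\[
\det\bigg(z\mathbf I+\sum_{i=1}^m z_i A_i\bigg)=\sum_{(k_1,\ldots,k_m)} c_{k_1,\ldots,k_m}(z)\,z_1^{k_1}\cdots z_m^{k_m},
\]
where the sum ranges over multi-indices with $k_i\le d$, obtained by the Leibniz expansion of the determinant. The key structural observation, coming from the fact that the variable $A_i$ enters only through the product $z_i A_i$, is that each coefficient $c_{k_1,\ldots,k_m}(z)$ is a polynomial in the entries of the matrices $A_i$ which is homogeneous of degree exactly $k_i$ in the entries of $A_i$, for every $i$.

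Next I would compute the effect of the differential operator. Since $(1-\partial_{z_i})[z_i^{k_i}]\big|_{z_i=0}$ equals $1$ when $k_i=0$, equals $-1$ when $k_i=1$, and equals $0$ otherwise, applying $\prod_i(1-\partial_{z_i})$ and setting all $z_i=0$ gives
\[
\mu[A_1,\ldots,A_m](z)=\sum_{(k_1,\ldots,k_m)\in\{0,1\}^m}(-1)^{k_1+\cdots+k_m}\,c_{k_1,\ldots,k_m}(z).
\]
Because in this sum each $k_i\in\{0,1\}$, the dependence on any fixed $A_i$ (with the other $A_j$'s held fixed) is by construction at most affine: the $k_i=0$ terms contribute a constant (in $A_i$) and the $k_i=1$ terms contribute a linear expression in the entries of $A_i$. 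This is precisely the multi-affine property. Symmetry then follows because $\det(z\mathbf I+\sum z_iA_i)$ is invariant under any permutation $\pi$ applied simultaneously to the pairs $(z_i,A_i)$, while the differential operator $\prod_i(1-\partial_{z_i})$ and the evaluation $z_1=\cdots=z_m=0$ are manifestly invariant under relabeling of indices.

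There is no serious obstacle here; the only point that requires care is the homogeneity claim for $c_{k_1,\ldots,k_m}$, which I would verify by noting that under the scaling $A_i\mapsto\lambda A_i$ the variable $z_i$ is effectively rescaled to $\lambda z_i$, so the coefficient of $z_i^{k_i}$ scales by $\lambda^{k_i}$; this pins down both the degree in $A_i$ and (by the multilinearity of the determinant in its columns) its polynomial nature. Once this is in place, the multi-affinity and the symmetry are immediate from the display above.
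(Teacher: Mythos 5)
Your proposal is correct, and it takes a genuinely different route from the paper. The paper proves multi-affinity by Jacobi's formula: for $B$ invertible, $(1-\partial_{z_1})\det(B+z_1A_1)\big|_{z_1=0}=\det(B)\big(1-\tr(B^{-1}A_1)\big)$, which is visibly affine in $A_1$, and then the general (non-invertible $B$) case follows by density and continuity; the remaining operators $(1-\partial_{z_i})$ for $i\ge 2$ are linear and hence preserve affinity in $A_1$. You instead expand the determinant as $\sum_k c_{k_1,\ldots,k_m}(z)\,z_1^{k_1}\cdots z_m^{k_m}$, use the elementary observation that $(1-\partial_{z_i})z_i^{k_i}\big|_{z_i=0}$ is $1,-1,0$ according as $k_i=0,1,\ge 2$, and invoke the homogeneity of $c_{k_1,\ldots,k_m}$ (degree $k_i$ in the entries of $A_i$) to conclude that the surviving sum over $k\in\{0,1\}^m$ is affine in each $A_i$. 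Your argument avoids Jacobi's formula and the density-of-invertibles continuity step, at the small cost of the bookkeeping needed to justify the homogeneity claim (which your scaling remark handles correctly). The paper's version, in exchange, produces the explicit affine formula $\det(B)\big(1-\tr(B^{-1}A_1)\big)$, a by-product that is often useful in this circle of ideas. Both proofs of the symmetry part are essentially identical.
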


\begin{proof}
First we shall show that for any $d\times d$ matrix $B$, a function 
\[
f: M_{d\times d}(\C) \to \C, \qquad f(A_1) = (1 - \partial_{z_1}) \det( B+ z_1 A_1)|_{z_1=0} \qquad\text{for } A_1 \in M_{d\times d}(\C)
\]
is affine. Indeed, if $B$ is invertible, then by Jacobi's formula
\[
f(A_1)= \det(B)  - \det( B) \partial_{z_1}\det(\mathbf I + z_1 B^{-1}A_1)|_{z_1=0} = \det(B)(1 - \tr(B^{-1}A_1)).
\]
Since invertible matrices are dense in the set of all matrices, by continuity we deduce the general case. Therefore, for any choice of matrices $A_2, \ldots, A_m$, a mapping
\[
(M_{d\times d}(\C),\C^{m-1}) \ni (A_1, z_2,\ldots,z_m) \mapsto  (1 - \partial_{z_1})\det \bigg( z \mathbf I + \sum_{i=1}^m z_i A_i \bigg) \bigg|_{z_1=0}
\]
is affine in the $A_1$ variable and a polynomial of degree $\le d$ in $z_2,\ldots,z_m$ variables. Applying linear operators, such as partial differential operators with constant coefficients $(1-\partial_{z_i})$, $i=2,\ldots,m$, preserves this property. Consequently, the mapping $A_1 \mapsto \mu[A_1,\ldots,A_m](z)$ is affine. Since $\mu$ is symmetric with respect to any permutation of its arguments $A_1, \ldots, A_m$, $\mu$ is multi-affine. 
\end{proof}

Lemma \ref{mas} implies the following random multilinearization formula \cite[Lemma 3.4]{MB}. Note that the rank one assumption, for example in \cite[Corollary 4]{Tao}, is not needed.  

\begin{lemma}\label{mux}
Let $X_1,\ldots ,X_m$ be $d\times d$ jointly independent random matrices, which take finitely many values. Then,
\begin{equation}\label{mux1}
\E{}{\mu[X_1,\ldots, X_m](z)} = \mu[\E{}{X_1}, \ldots, \E{}{X_m}](z) \qquad z\in\C.
\end{equation}
\end{lemma}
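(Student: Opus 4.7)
The assertion is a probabilistic multi-linearization: the random polynomial $\mu[X_1,\ldots,X_m](z)$ has expectation equal to $\mu$ evaluated at the expected matrices. The enabling input is Lemma \ref{mas}, which says that $\mu$ is affine in each matrix argument separately. Combined with joint independence of the $X_i$, the standard principle that expectation commutes with an affine function should let me pull the expectation inside one coordinate at a time.

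\textbf{Plan.} First, observe that $\mu[A_1,\ldots,A_m](z)$ is a polynomial in $z$ of degree $d$ whose coefficients are, for each fixed $i$ and fixed values of the other arguments, affine functions of the entries of $A_i$. Hence it suffices to prove \eqref{mux1} coefficient-by-coefficient in $z$, i.e.\ to prove the identity for any scalar-valued multi-affine map in place of $\mu$.

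Next, I would argue by induction on $m$. For the inductive step, condition on $X_2,\ldots,X_m$ and use joint independence together with the tower property to write
\[
\E{}{\mu[X_1,\ldots,X_m](z)} = \E{X_2,\ldots,X_m}{\E{X_1}{\mu[X_1,X_2,\ldots,X_m](z)\mid X_2,\ldots,X_m}}.
\]
Since $X_1$ is independent of $X_2,\ldots,X_m$, the inner conditional expectation is computed as an ordinary expectation over $X_1$ with the remaining arguments frozen. Because $\mu$ is affine in its first argument by Lemma \ref{mas} and $X_1$ takes only finitely many values, this inner expectation equals $\mu[\E{}{X_1},X_2,\ldots,X_m](z)$. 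Applying symmetry of $\mu$ (again Lemma \ref{mas}), the function $(A_2,\ldots,A_m)\mapsto \mu[\E{}{X_1},A_2,\ldots,A_m](z)$ remains multi-affine in the remaining $m-1$ arguments, so the inductive hypothesis applied to the independent collection $X_2,\ldots,X_m$ gives $\mu[\E{}{X_1},\E{}{X_2},\ldots,\E{}{X_m}](z)$, as required.

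\textbf{Main obstacle.} There is essentially none; the only subtlety is the familiar one of verifying that the affine-expectation exchange is applied to each coefficient of the polynomial in $z$ separately, which is immediate because $\mu[\,\cdot\,](z)$ is a polynomial of fixed degree whose coefficients inherit the multi-affinity from Lemma \ref{mas}. The finite-support hypothesis on the $X_i$ ensures all expectations are well-defined and that conditional expectations can be computed by a finite weighted sum, where affinity is plainly preserved.
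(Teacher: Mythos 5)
Your proof is correct and is exactly the standard argument: the paper itself does not include a proof of this lemma but instead cites the author's survey [MB, Lemma 3.4], where the proof proceeds the same way you describe, pulling the expectation through one argument at a time using multi-affinity (Lemma \ref{mas}) and joint independence. Your observation that the identity can be checked coefficient-by-coefficient in $z$, together with the conditioning/tower argument, is a clean and complete way to carry this out.
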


To formulate more properties of mixed characteristic polynomial we need the concept of a real stable polynomial.

\begin{definition}
Let $\C_+ = \{ z \in \C : \Im(z) > 0 \}$ be the upper half plane. 
We say that a polynomial $p \in \C[z_1, \ldots, z_m]$ is {\em stable} if $p(z_1,\ldots,z_m) \neq 0$ for every $(z_1,\ldots,z_m) \in \C_+^m$.
A polynomial is called {\em real stable} if it is stable and all of its coefficients are real.
\end{definition}

Note that a univariate polynomial is real stable if and only if it is real-rooted, i.e., all its roots are real. It is easy to show that the following transformations on $\C[z_1, \ldots, z_m]$ preserve stability:
\begin{enumerate}
\item
(permutation) 
$p(z_1,\ldots,z_m) \mapsto p(z_{\sigma(1)},\ldots,z_{\sigma(m)})$ for some $\sigma \in S_m$,
\item 
(scaling) 
$p(z_1,\ldots,z_m) \mapsto p(t z_1,\ldots,z_m)$ for some $t>0$,
\item
(restriction)
$p(z_1,\ldots,z_m) \mapsto p(t,\ldots,z_m)$ for some $t\in \R$, 
\item
(diagonalization)
$p(z_1,z_2,\ldots,z_m) \mapsto p(z_2,z_2,z_3,\ldots,z_m)$.
\end{enumerate}

Borcea and Br\"and\'en \cite[Theorem 2.1]{BB09} have characterized linear transformations that preserve stability.

\begin{theorem}\label{bb09}
For  $k \in \N$, let $\C_k[z_1,\ldots,z_m]$ denote the space space of complex polynomials of degree at most $k$ in each variable $z_i$, $i=1,\ldots,m$. Let 
\[
T: \C_k[z_1,\ldots,z_m] \to \C[z_1, \ldots, z_m]
\]
be a linear operator such that $\dim(\operatorname{range}(T))\ge 2$. Define the symbol of $T$ as a polynomial $G_T \in \C[z_1,\ldots,z_m,w_1,\ldots,w_m]$ given by 
\[
G_T(z_1,\ldots,z_m,w_1,\ldots,w_m)= T((z_1+w_1)^k \ldots (z_m+w_m)^k),
\]
where the action of $T$ is extended to variables $w_1,\ldots,w_m$ by treating them as constants.

Then, the operator $T$ preserves stability, i.e., $T(p)$ is a stable polynomial for any stable $p\in \C_k[z_1, \ldots, z_m]$, if and only, the symbol $G_T$ is a stable polynomial in $2m$ variables.
\end{theorem}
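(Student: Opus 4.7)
The plan splits the proof into the two implications. The forward implication --- that stability-preservation by $T$ forces its symbol $G_T$ to be stable --- admits a direct probe argument. For each $w=(w_1,\ldots,w_m) \in \C_+^m$, the test polynomial
\[
p_w(z_1,\ldots,z_m) = \prod_{i=1}^m (z_i+w_i)^k
\]
lies in $\C_k[z_1,\ldots,z_m]$ and is stable in $z$, since its only zeros occur at $z_i = -w_i$, which have negative imaginary part. By hypothesis, $T(p_w)$ is then a stable polynomial in $z$; but $T(p_w)(z) = G_T(z,w)$ by definition of the symbol. Consequently $G_T(z,w)\ne 0$ for every $(z,w)\in \C_+^{2m}$, provided $G_T$ does not vanish identically in $z$ for some such $w$. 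This nondegeneracy is where the hypothesis $\dim(\range(T))\ge 2$ enters: since the family $\{p_w : w\in \C^m\}$ spans $\C_k[z_1,\ldots,z_m]$ by a Vandermonde-type argument (the powers $(z_i+w_i)^k$ span the polynomials of degree $\le k$ in $z_i$ as $w_i$ varies), a vanishing $G_T\equiv 0$ would force $T\equiv 0$, contradicting the rank assumption.

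The reverse implication --- stability of $G_T$ implies that $T$ preserves stability --- is the main obstacle. My plan is to reduce to the multi-affine case via polarization. Given a stable $p\in \C_k[z_1,\ldots,z_m]$, form its polarization $P$ in $mk$ variables $z_{i,j}$, $1\le i\le m$, $1\le j\le k$, so that $P$ is multi-affine, symmetric in each block $\{z_{i,1},\ldots,z_{i,k}\}$, and satisfies $P|_{z_{i,1}=\cdots=z_{i,k}=z_i}=p$. The Grace--Walsh--Szego coincidence theorem ensures that $P$ is stable in $mk$ variables if and only if $p$ is stable in $m$ variables. Correspondingly I would lift $T$ to an operator $\widetilde T$ on multi-affine polynomials, whose symbol is obtained from $G_T$ by the same polarization, so that stability of $G_T$ transfers to stability of the lifted symbol. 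Since restriction and permutation already preserve stability, it then suffices to establish the theorem in the multi-affine setting.

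The hardest ingredient is this multi-affine case, where one must extract an explicit reproducing formula expressing $\widetilde T(P)$ in terms of the symbol --- essentially writing $\widetilde T(P)$ as a polarized contraction against $G_{\widetilde T}$ --- and then verify that this contraction is a limit of stability-preserving operations built from the elementary transformations (permutation, scaling, restriction, diagonalization) listed in the excerpt. Hurwitz's theorem underwrites the limiting step, while direct computation on rank-one factors certifies stability of the building blocks. I expect this final bookkeeping --- identifying the right sequence of stability-preserving reductions and controlling the limits so that no zeros drift into $\C_+^m$ --- to consume the bulk of the proof, as in the original argument of Borcea and Br\"and\'en.
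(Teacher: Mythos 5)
The paper does not prove Theorem~\ref{bb09} at all --- it is quoted verbatim as an external result, namely \cite[Theorem 2.1]{BB09}, and used as a black box. So there is no ``paper's own proof'' to compare against; what you have produced is a sketch of the Borcea--Br\"and\'en argument itself. Your high-level strategy matches theirs: probe with the test polynomials $p_w(z)=\prod_i(z_i+w_i)^k$ for the forward implication, and reduce to the multi-affine setting via polarization and the Grace--Walsh--Szeg\H{o} coincidence theorem for the converse.

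There is, however, a real gap in your forward direction. You correctly note that for each fixed $w\in\C_+^m$ the polynomial $p_w$ is stable, so $T(p_w)=G_T(\cdot,w)$ is stable, and you then worry only about whether $G_T$ might vanish identically in all $2m$ variables. But the dangerous case is intermediate: $T(p_{w_0})\equiv 0$ for one particular $w_0\in\C_+^m$ while $T(p_w)\not\equiv 0$ for nearby $w$. In the Borcea--Br\"and\'en convention the zero polynomial counts as stable, so a stability-preserving $T$ is permitted to send the stable polynomial $p_{w_0}$ to zero; yet if that happens, $G_T(z_0,w_0)=0$ for \emph{every} $z_0\in\C_+^m$, so $G_T$ fails to be stable. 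Your Vandermonde observation only shows that $\{w: T(p_w)\equiv 0\}$ is a proper algebraic subset (else $T\equiv 0$); it does not show that this subset avoids $\C_+^m$, and a single relation $\sum_\beta T(\text{monomial}_\beta)\,w_0^\beta=0$ is perfectly compatible with $\dim(\range T)\ge 2$. Ruling this out is part of the actual content of \cite[Theorem 2.1]{BB09}, and requires the dichotomy between stable symbol and the degenerate rank-one form $T(p)=\alpha(p)\,q$, established via a Hurwitz limiting argument together with an analysis of the zero locus of $G_T$.

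For the converse direction your plan --- polarize to the multi-affine setting, transfer stability of $G_T$ to the lifted symbol, and then express $T(p)$ as a contraction against the symbol --- is the right shape, and you correctly flag the final reproducing-formula step as where the real work lives. As a plan it is sound; as a proof it is incomplete in exactly the places you identify. For the purposes of the paper under review, none of this needs to be redone: the paper deliberately outsources Theorem~\ref{bb09} to \cite{BB09} and only invokes it (through Lemma~\ref{spd} and Theorem~\ref{bb10}) to certify that specific low-degree differential operators preserve stability.
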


Borcea and Br\"and\'en \cite[Theorem 1.2]{BB} have also shown a similar characterization for the special case of partial differential operators with polynomial coefficients.

\begin{theorem}\label{bb10}
Let $T: \C[z_1,\ldots,z_m] \to \C[z_1, \ldots, z_m]$
be a linear operator of the form
\[
T = \sum_{\alpha, \beta \in \N_0^m} a_{\alpha,\beta} z^\alpha \partial^\beta,
\]
where $z=(z_1,\ldots,z_m)$ and coefficients $a_{\alpha,\beta}\in \C$ are nonzero only for a finite number of pairs of multi-indices $(\alpha,\beta)$. Define the symbol of $T$ as a polynomial $F_T \in \C[z_1,\ldots,z_m,w_1,\ldots,w_m]$ given by 
\[
F_T(z_1,\ldots,z_m,w_1,\ldots,w_m)= \sum_{\alpha, \beta \in \N_0^m} a_{\alpha,\beta} z^\alpha (-w)^\beta
\]
where $w=(w_1,\ldots,w_m)$. 

Then, the operator $T$ preserves stability, i.e., $T(p)$ is a stable polynomial for any stable $p\in \C[z_1, \ldots, z_m]$, if and only, the symbol $F_T$ is a stable polynomial in $2m$ variables.
\end{theorem}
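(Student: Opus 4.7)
My plan is to deduce Theorem \ref{bb10} from Theorem \ref{bb09} via a truncation-and-limit strategy. First I would observe that $T$ preserves stability on $\C[z_1,\ldots,z_m]$ if and only if, for every $k \ge k_0 := \max\{|\alpha|_\infty,|\beta|_\infty : a_{\alpha,\beta}\neq 0\}$, the restriction $T_k := T|_{\C_k[z_1,\ldots,z_m]}$ preserves stability on $\C_k[z_1,\ldots,z_m]$, since any polynomial lies in some such $\C_k$ and $T(p)=T_k(p)$ there. Theorem \ref{bb09} then translates the question into stability of the symbol
\[
G_{T_k}(z,w) = T_k\Bigl(\prod_{j=1}^m (z_j+w_j)^k\Bigr) = \sum_{\alpha,\beta} a_{\alpha,\beta}\, z^\alpha \prod_{j} \tfrac{k!}{(k-\beta_j)!}(z_j+w_j)^{k-\beta_j}.
\]

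The bridge between $F_T$ and $G_{T_k}$ comes from the exponential identity $T(e^{-\sum_j w_j z_j}) = e^{-\sum_j w_j z_j}\, F_T(z,w)$, a direct consequence of $\partial_z^\beta e^{-\sum_j w_j z_j} = (-w)^\beta e^{-\sum_j w_j z_j}$. To realize this at the polynomial level I would use the approximating family
\[
p_N(z,w) := \prod_{j=1}^m \Bigl(1 - \tfrac{w_j z_j}{N}\Bigr)^N,
\]
which is itself stable in $2m$ variables because $w_j z_j = N$ with $z_j, w_j \in \C_+$ would force $w_j = N/z_j \in \C_-$, a contradiction. A direct differentiation gives $T(p_N) = p_N \cdot Q_N$ with $Q_N \to F_T$ uniformly on compact subsets of $\C_+^{2m}$. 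For the ``only if'' direction, stability preservation by $T$ makes $T(p_N)$ stable; dividing by the nonvanishing $p_N$ shows $Q_N$ has no zeros on $\C_+^{2m}$, and Hurwitz's theorem then forces the uniform limit $F_T$ to be either identically zero or nowhere zero on $\C_+^{2m}$, hence stable.

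For the converse I would combine Theorem \ref{bb09} with the Cauchy representation
\[
G_{T_k}(z,w) = \frac{(k!)^m}{(2\pi i)^m}\oint\!\cdots\!\oint \frac{e^{\sum_j \lambda_j(z_j+w_j)}\,F_T(z,-\lambda)}{\lambda_1^{k+1}\cdots\lambda_m^{k+1}}\, d\lambda_1\cdots d\lambda_m,
\]
derived from $T(e^{\sum_j \lambda_j z_j})\,e^{\sum_j \lambda_j w_j} = e^{\sum_j \lambda_j(z_j+w_j)}\, F_T(z,-\lambda)$ together with the monomial contour formula for $(z_j+w_j)^k$. A deformation of each $\lambda_j$-contour into the lower half-plane should let the assumed stability of $F_T(z,-\lambda)$ govern nonvanishing of $G_{T_k}$ on $\C_+^{2m}$. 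The hard part is making this contour argument rigorous: the deformation must simultaneously respect the sign flip in $F_T(z,-\lambda)$, the $2m$ half-plane constraints on $(z,w)$, and the growth of the integrand at infinity. A cleaner route, likely closer to the argument of \cite{BB}, is to bypass integration by identifying $T$ as a suitable limit of composed symbol-based preservers whose stability reduces to stability of $F_T$ through Theorem \ref{bb09}.
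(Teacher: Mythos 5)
The paper does not prove Theorem~\ref{bb10} at all --- it is quoted verbatim from Borcea and Br\"and\'en \cite[Theorem 1.2]{BB}, so there is no in-paper argument to compare against. Judged on its own terms, your sketch of the ``only if'' direction is the standard exponential-approximant argument and is essentially the route Borcea--Br\"and\'en take: the family $p_N(z,w)=\prod_j(1-w_jz_j/N)^N$ is indeed stable, a short computation gives $T(p_N)/p_N \to F_T$ locally uniformly on $\C_+^{2m}$, and a Hurwitz-type limit finishes. Two details still need to be supplied: (a) you must justify that applying $T$ (which you only know to preserve stability of polynomials in $z_1,\dots,z_m$) to the $2m$-variable stable polynomial $p_N$, with the $w$-variables frozen, yields a polynomial that is stable \emph{in all $2m$ variables} or identically zero --- this is true, but it requires a short Hurwitz/algebraic-subset argument, it is not automatic; and (b) the Hurwitz limit gives ``$F_T$ nowhere zero on $\C_+^{2m}$ or $F_T\equiv 0$,'' so the degenerate case must be folded into the stability convention.

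The genuine gap is the ``if'' direction, and you flag it yourself. The Cauchy integral representation of $G_{T_k}$ in terms of $F_T(z,-\lambda)$ is algebraically correct, but it does not lead anywhere: nonvanishing of the integrand over a contour never implies nonvanishing of the integral, the proposed deformation of each $\lambda_j$-contour into the lower half-plane is blocked by the pole at $\lambda_j=0$, and the exponential factor $e^{\lambda_j(z_j+w_j)}$ in fact \emph{grows} as $\operatorname{Im}\lambda_j\to-\infty$ when $z_j+w_j\in\C_+$. Moreover, the reduction to Theorem~\ref{bb09} requires showing stability of $G_{T_k}$ for \emph{every} $k\ge k_0$ from stability of the single polynomial $F_T$, and the $k$-dependence of $G_{T_k}$ makes this nontrivial. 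Your closing remark --- that one should instead realize $T$ as a limit of operators whose $\C_k$-symbols converge to $F_T$ and invoke Theorem~\ref{bb09} together with a Hurwitz argument in the reverse direction --- is in fact closer to the correct mechanism, but as written the converse is an unproven plan, not a proof. Since the paper itself cites this theorem, the appropriate thing to do here is the same: cite \cite{BB} rather than attempt a self-contained derivation.
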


As an immediate corollary of Theorem \ref{bb10} we show stability preservation of the following two differential operators, which will be used subsequently.

\begin{lemma} \label{spd}
The following partial differentiation operators on $\C[z_1, \ldots, z_m]$ preserve stability:
\begin{enumerate}
\item {\rm (differentiation operator of order 1)}
$1+t\partial_{z_1}$, where $t\in \R$,
\item {\rm (mixed differentiation operator)}
$1+t\partial_{z_1}-t\partial_{z_2}-s^2\partial_{z_1}\partial_{z_2}$, where $t,s\in \R$ and $t^2\le s^2$.
\end{enumerate}
\end{lemma}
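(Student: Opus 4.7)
The plan is to apply Theorem~\ref{bb10} to each operator in turn. Both operators have constant coefficients and involve only $\partial_{z_1}$ and $\partial_{z_2}$, so by the formula for $F_T$ the symbol will be a polynomial in $w_1$ (and possibly $w_2$) alone, independent of $z_1,\ldots,z_m$ and the remaining $w_j$. Stability on $\C_+^{2m}$ therefore reduces to verifying that a simple polynomial in one or two variables has no zeros in $\C_+$ or $\C_+^2$.

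For part (i), I would simply substitute into the definition of $F_T$ to obtain $F_T(z,w) = 1 - tw_1$. If $t=0$ this is a nonzero constant; otherwise the unique zero is the real number $1/t$, which does not lie in $\C_+$. This is a one-line verification.

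For part (ii), the symbol works out to
\[
F_T(z,w) = 1 - tw_1 + tw_2 - s^2 w_1 w_2.
\]
The key observation is that solving $F_T(w_1,w_2) = 0$ for $w_2$ yields a real M\"obius transformation
\[
w_2 = \frac{tw_1 - 1}{-s^2 w_1 + t},
\]
whose $2\times 2$ coefficient matrix has determinant $t^2 - s^2$, which is non-positive by hypothesis. Using the standard identity $\Im(w_2) = (t^2-s^2)\Im(w_1)/|-s^2 w_1 + t|^2$ valid for real M\"obius maps, one sees that $\Im(w_2) \le 0$ whenever $w_1 \in \C_+$. Hence no zero of $F_T$ lies in $\C_+^2$, which shows that $F_T$ is stable and, by Theorem~\ref{bb10}, that the operator preserves stability.

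I expect the main obstacle to be handling the boundary case $t^2 = s^2$ carefully, where the M\"obius map degenerates and where one must also check that the denominator $-s^2 w_1 + t$ does not vanish on $\C_+$ (it does not, since its only root is the real number $t/s^2$ when $s \ne 0$, while $s=0$ forces $t=0$ and trivializes the symbol). Two equivalent remedies cover the degenerate case: the displayed identity still gives $\Im(w_2) = 0 \notin \C_+$; alternatively, one checks directly that the symbol factors as $(1-sw_1)(1+sw_2)$ when $t=s$ (and analogously for $t=-s$), each factor being an affine real-coefficient polynomial with a single real root, hence stable, so the product is stable. With these edge cases dispatched, the two applications of Theorem~\ref{bb10} finish the proof.
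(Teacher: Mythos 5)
Your proposal is correct and follows essentially the same route as the paper: apply Theorem~\ref{bb10}, observe that the symbol for (i) is $1-tw_1$ and for (ii) is $1-tw_1+tw_2-s^2w_1w_2$, and in (ii) solve the equation for $w_2$ to show $\Im(w_2)\le 0$ whenever $w_1\in\C_+$. Your M\"obius-map packaging and explicit treatment of the boundary cases $t^2=s^2$ and $s=0$ are in fact a bit more careful than the paper, which asserts $\Im(w_2)<0$ although equality occurs when $t^2=s^2$; the conclusion $w_2\notin\C_+$ holds either way.
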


\begin{proof}
Part (i) can be proven by a direct argument, see \cite[Corollary 3.8]{MSS} or \cite[Lemma 3.6]{MB}. 
Alternatively, it follows immediately from Theorem \ref{bb10}. The symbol of the operator $1+t\partial_{z_1}$ is a polynomial $1-tw_1$, which is real stable. 

Likewise, to prove (ii) it suffices to show that the symbol $F_T$ of the operator $T=1+t\partial_{z_1}-t\partial_{z_2}-s^2\partial_{z_1}\partial_{z_2}$ is real stable. That is, we need to show that the polynomial $F_T(w_1,w_2)=1-tw_1+tw_2-s^2w_1w_2 \in \R[w_1,w_2]$ is stable. This follows from the characterization of real stable multi-affine polynomials due to Branden \cite[Theorem 5.6]{Bra07}. Alternatively, it can be shown by the following elementary argument. 

Suppose that $1-tw_1+tw_2-s^2w_1w_2=0$ for some $w_1 \in \C_+$. Solving for $w_2$ yields
\[
w_2=\frac{tw_1-1}{t-s^2 w_1}= \frac{t^2 w_1+s^2 \overline{w_1} -t-ts^2 |w_1|^2}{|t-s^2w_1|^2}.
\]
Since $t^2 \le s^2$, we deduce that $\Im(w_2)<0$. Hence, the polynomial $F_T$ is stable. By Theorem \ref{bb10}, the operator $T$ preserves stability.
\end{proof}

A non-example of stability preserving operator is $1+\partial_{z_1}\partial_{z_2}$, whose symbol $1+w_1w_2$ is not a stable polynomial.
Next we show a useful extension of a result of Marcus, Spielman, and Srivastava, see \cite[Corollary 4.4]{MSS} on real-rootedness of mixed characteristic polynomial.

\begin{lemma}\label{crs}
If $\ve_1,\ldots,\ve_m \in \R$ and $A_1,\ldots,A_m$ are positive semidefinite hermitian $d\times d$ matrices, then the mixed characteristic polynomial $\mu[\ve_1 A_1,\ldots,\ve_m A_m]$ is real-rooted and monic of degree $d$.
\end{lemma}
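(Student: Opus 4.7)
The plan is to reduce Lemma \ref{crs} to the MSS real-rootedness result for positive semidefinite matrices by absorbing the signs $\ve_i$ into a change of variables. The key observation is that the operator $1-\partial_{z_i}$ becomes $1+\partial_{y_i}$ after substituting $y_i=-z_i$, and both forms preserve real stability by Lemma \ref{spd}(i).

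Concretely, I would partition $[m] = I^+ \cup I^-$ according to whether $\ve_i \ge 0$ or $\ve_i < 0$, set $B_i := |\ve_i| A_i$ (each positive semidefinite), and introduce new variables $y_i=z_i$ for $i \in I^+$ and $y_i=-z_i$ for $i \in I^-$. A short check gives $\ve_i z_i A_i = y_i B_i$ in both cases, so
\[
\det\biggl(zI + \sum_{i=1}^m z_i \ve_i A_i\biggr) = \det\biggl(zI + \sum_{i=1}^m y_i B_i\biggr) =: P(z,y_1,\ldots,y_m).
\]
First I would verify that $P$ is real stable in $(z,y_1,\ldots,y_m)$: for $(z,y_1,\ldots,y_m)\in\C_+^{m+1}$, the matrix $zI + \sum_i y_i B_i$ has imaginary part $\Im(z)I + \sum_i \Im(y_i) B_i$, which is positive definite; any matrix with positive definite imaginary part is invertible, so $P$ cannot vanish there.

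By the chain rule, $1-\partial_{z_i} = 1-\partial_{y_i}$ for $i\in I^+$ and $1-\partial_{z_i} = 1+\partial_{y_i}$ for $i\in I^-$; each form preserves real stability by Lemma \ref{spd}(i) (with $t=-1$ or $t=1$). Applying all such operators to $P$ and then restricting to $y_1=\cdots=y_m=0$, which preserves stability by rule (iii) in the list following Lemma \ref{spd}, leaves a univariate real stable, hence real-rooted, polynomial in $z$; this is exactly $\mu[\ve_1 A_1, \ldots, \ve_m A_m](z)$.

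Monicness of degree $d$ is automatic: viewing $P$ as a polynomial in $z$ with coefficients in $\C[y_1,\ldots,y_m]$, the coefficient of $z^d$ is the constant $1$ coming from $\det(zI)=z^d$, and each $1-\partial_{y_i}$ or $1+\partial_{y_i}$ acts trivially on constants, so the leading coefficient remains $1$ after evaluating $y_i=0$. The only step requiring care is the real stability of $P$, but this is the standard positive-imaginary-part argument (also implicit in the proof of \cite[Corollary 4.4]{MSS}); there is no deeper obstacle beyond bookkeeping of the sign flip.
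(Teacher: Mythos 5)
Your proof is correct and takes essentially the same route as the paper: the paper absorbs the scalars $\ve_i$ directly into the differential operators, rewriting $\mu[\ve_1 A_1,\ldots,\ve_m A_m]$ as $\prod_i(1-\ve_i\partial_{z_i})\det(x\bI+\sum_i z_i A_i)\big|_{z=0}$ and then invoking Lemma~\ref{spd}(i) plus restriction, whereas you achieve the same effect by splitting off the signs via the substitution $y_i=\pm z_i$ and folding $|\ve_i|$ into $B_i$, but the underlying mechanism (real stability of the determinantal polynomial, stability of $1+t\partial$, restriction to $0$) and the bookkeeping of monicness are identical.
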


\begin{proof}
Note that for any polynomial $p \in \C[z_1,z_2,\ldots,z_m]$ we have
\[
\partial_{z_1} p(\epsilon_1 z_1,z_2,\ldots z_m)\bigg|_{z_1=0} = \epsilon_1 \partial_{z_1} p( z_1,z_2,\ldots z_m)\bigg|_{z_1=0} \in \C[z_2,\ldots,z_m].
\]
Hence, we observe that 
\begin{equation}\label{crs2}
\begin{aligned}
\mu[\ve_1 A_1,\ldots,\ve_m A_m](z) & =  \bigg(\prod_{i=1}^m (1 - \partial_{z_i}) \bigg) \det \bigg( x \mathbf I + \sum_{i=1}^m z_i \ve_i A_i \bigg)\bigg|_{z_1=\ldots=z_m=0}
\\
& =  \bigg(\prod_{i=1}^m (1 -\ve_i \partial_{z_i}) \bigg) \det \bigg( x \mathbf I + \sum_{i=1}^m z_i A_i \bigg)\bigg|_{z_1=\ldots=z_m=0}.
\end{aligned}
\end{equation}
By the result of Borcea and Br\"and\'en \cite[Proposition 1.12]{BB} the polynomial
\[
\det \bigg( x \mathbf I + \sum_{i=1}^m z_i A_i \bigg) \in \R[x,z_1,\ldots,z_m]
\]
is real stable. The operators $(1 -\ve_i \partial_{z_i})$ preserve real stability of polynomials by Lemma \ref{spd}. Setting all variables $z_i$ to zero also preserves the real stability. The resulting polynomial is univariate, and hence it is real-rooted. 
\end{proof}

It is well known that the (usual) characteristic polynomial of a hermitian matrix is real-rooted. In light of Lemma \ref{crs}, one might think that a mixed characteristic polynomial $\mu[A_1,\ldots, A_m]$ is also real-rooted for hermitian matrices $A_1,\ldots,A_m$. However, the following simple example shows that this is not true in general.

\begin{example}\label{hermit}
 Let $A_1= \begin{bmatrix} 1 & 0 \\ 0 & -1 \end{bmatrix}$ and $A_2= \begin{bmatrix} -1 & 0 \\ 0 & 1 \end{bmatrix}$. Then the determinantal polynomial $Q(z,z_1,z_2) = \det ( z \mathbf I + z_1 A_1 + z_2 A_2) = z^2 - (z_1-z_2)^2$. Hence, the mixed characteristic polynomial
\[
\mu[A_1,A_2](z) = (1-\partial_{z_1})(1-\partial_{z_2}) Q |_{z_1=z_2=0} = z^2+2
\]
is not real-rooted.
\end{example}

\subsection{Interlacing family of polynomials}
A key contribution of Marcus, Spielman, and Srivastava is the concept of interlacing family of polynomials \cite{MSS0, MSS}. This method relies on the following well-known lemma about real-rooted polynomials. For the proof see \cite[Lemma 3.9]{MB}.

\begin{lemma}\label{ell}
Let $p_1,\ldots, p_n \in \R[x]$ be real-rooted monic polynomials of the same degree. Suppose that every convex combination \[
\sum_{i=1}^n t_i p_i,\qquad{where } \ \sum_{i=1}^n t_i =1,\ t_i\ge 0
\]
is  a real-rooted polynomial. Then, for any such convex combination there exist $1\le i_0, j_0 \le n$ such that
\begin{equation}\label{ell1}
\maxroot(p_{i_0}) \le \maxroot\bigg(\sum_{i=1}^n t_i p_i \bigg)
\le \maxroot(p_{j_0}).
\end{equation}
\end{lemma}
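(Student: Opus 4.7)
The upper bound $\maxroot(q) \le \max_j \maxroot(p_j)$ for $q = \sum_i t_i p_i$ is almost immediate by positivity: setting $\beta = \max_j \maxroot(p_j)$, each monic $p_j$ satisfies $p_j(x) > 0$ for $x > \beta$, so $q(x) > 0$ on $(\beta,\infty)$, and therefore $\maxroot(q) \le \beta$.

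For the lower bound $\maxroot(q) \ge \min_j \maxroot(p_j) =: \alpha$, the plan is to induct on $n$ and reduce to a two-polynomial base case. Fix $i_0$ with $\maxroot(p_{i_0}) = \alpha$; if $t_{i_0} = 1$ the bound is trivial, so assume $t_{i_0} < 1$ and write $q = t_{i_0} p_{i_0} + (1 - t_{i_0}) p'$, where $p' = (1 - t_{i_0})^{-1} \sum_{i \ne i_0} t_i p_i$. The subfamily $\{p_i : i \ne i_0\}$ inherits the hypothesis that every convex combination is real-rooted, so by the inductive hypothesis $\maxroot(p') \ge \min_{i \ne i_0} \maxroot(p_i) \ge \alpha$. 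Every convex combination of $p_{i_0}$ and $p'$ is also a convex combination of the original $p_i$'s and hence real-rooted, so the problem reduces to the two-polynomial case applied to $p_{i_0}$ and $p'$, which yields $\maxroot(q) \ge \min(\maxroot(p_{i_0}), \maxroot(p')) = \alpha$.

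For the base case $n = 2$, the natural tool is Obreschkoff's theorem, which furnishes a common interlacer $h$: a monic polynomial of degree $d-1$ with roots $\lambda_1 \le \ldots \le \lambda_{d-1}$ that interlace the roots $r_{i,1} \le \ldots \le r_{i,d}$ of each $p_i$. One checks via the alternating signs of $p_i(\lambda_k)$ and the intermediate value theorem that $h$ also interlaces any convex combination $q = t p_1 + (1-t) p_2$, so $\maxroot(q) \ge \lambda_{d-1}$. For $x \ge \lambda_{d-1}$ the product $\prod_{k=1}^{d-1}(x - r_{i,k})$ is nonnegative (since $r_{i,k} \le \lambda_{d-1}$ for $k \le d-1$), so $\operatorname{sign}(p_i(x)) = \operatorname{sign}(x - \maxroot(p_i))$. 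Evaluating $0 = q(\maxroot(q)) = t p_1(\maxroot(q)) + (1-t) p_2(\maxroot(q))$ with $t \in [0,1]$ forces $p_i(\maxroot(q)) \ge 0$ for some $i$, whence $\maxroot(q) \ge \maxroot(p_i)$, as required. The main obstacle is precisely this base case --- in particular the appeal to Obreschkoff on common interlacers, which is itself a nontrivial structural theorem about real-rooted polynomials; the degenerate configuration $\maxroot(q) = \lambda_{d-1}$ is a minor technical nuisance best dispatched by a small perturbation of the coefficients $t_i$ and passage to the limit.
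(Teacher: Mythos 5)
The paper does not reprove Lemma~\ref{ell} here; it cites the author's survey \cite[Lemma~3.9]{MB}, and the argument there is the standard one via common interlacing, which is the same route you take. Your three main ingredients --- the cheap positivity argument for the upper bound, the reduction to a two--polynomial base case, and the use of a common interlacer $h$ for the base case --- are all correct, and the inductive bookkeeping (checking that $p'$ is monic of degree $d$ and that every convex combination of $p_{i_0}$ and $p'$ is again a convex combination of the original family) is sound.

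Two remarks on sharpness and attribution. First, the classical Hermite--Kakeya--Obreschkoff theorem characterizes when \emph{all} real linear combinations $ap+bq$ are real-rooted; what you actually need is the weaker statement that if every \emph{convex} combination $tp+(1-t)q$, $t\in[0,1]$, of two monic real-rooted polynomials of the same degree is real-rooted, then $p$ and $q$ have a common interlacing. That is a genuine theorem (used by Chudnovsky--Seymour and stated explicitly in \cite{MSS}), but it is not literally Obreschkoff; the invocation should be adjusted. Second, the induction is unnecessary: once one has a common interlacer $h$ for $p_1,\dots,p_n$ (which exists under the lemma's hypothesis) and checks by the same sign/IVT argument that $h$ also interlaces $q=\sum t_i p_i$, the conclusion follows in one stroke. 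If $\lambda=\maxroot(h)$, each $p_i$ has exactly one root in $[\lambda,\infty)$, namely $\maxroot(p_i)$, and so does $q$. If $\maxroot(q)<\min_i\maxroot(p_i)$, then for every $x$ in the nonempty open interval $(\maxroot(q),\min_i\maxroot(p_i))$ one has $x>\lambda\ge r_{i,k}$ for all $k<d$, so $p_i(x)=(x-\maxroot(p_i))\prod_{k<d}(x-r_{i,k})<0$ strictly, whence $q(x)<0$; but $q(x)>0$ for $x>\maxroot(q)$ since $q$ is monic, a contradiction. Arguing on an open interval this way also dispatches the degenerate configuration $\maxroot(q)=\lambda_{d-1}$ that you propose to handle by perturbation, so that patch, while it does work, is avoidable.
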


A stronger variant of Lemma \ref{ell} asserts the existence of a common interlacing polynomial $q$ for the family $p_1,\ldots, p_n $. That is, $q$ a real rooted polynomial of one degree less than the degree of $p_i$'s such that roots of $q$ interlace roots of $p_i$ for every $i=1,\ldots,n$. This fact  justifies the name for interlacing family of polynomials although we will not employ roots other the largest root given by Lemma \ref{ell}.
We shall adopt the following simplified definition for interlacing family of polynomials that we find more convenient to use. It was formulated by Br\"and\'en \cite[Definition 2.1]{Bra}, but it is essentially equivalent to \cite[Definition 3.3]{MSS}. 

\begin{definition}\label{ifp} Let $S_1,\ldots,S_m$ be finite sets and $d\in \N$. For each choice $(s_1,\ldots,s_m) \in S_1 \times \ldots \times S_m$, let $f_{s_1,\ldots,s_m} \in  \R[x]$ be a monic real rooted polynomial of degree $d$. We say that $\{f_{s_1,\ldots,s_m} \}$ is an interlacing family of polynomials if for every choice of independent random variables $\xi_1,\ldots, \xi_m$ taking values in sets $S_1,\ldots,S_m$, respectively, the polynomial $\E{}{f_{\xi_1,\ldots,\xi_m}}$ is real-rooted.
\end{definition}

Once random variables $\xi_1,\ldots, \xi_m$ taking values in sets $S_1,\ldots,S_m$ are fixed, we can define a rooted tree of interlacing polynomials. The root of the tree is $f_\emptyset = \E{}{f_{\xi_1,\ldots,\xi_m}}$. For a partial choice $(s_1,\ldots, s_k) \in S_1 \times \ldots \times S_k$ with $k<m$, we define
\[
f_{s_1,\ldots,s_k} = \E{}{f_{s_1,\ldots, s_k, \xi_{k+1},\ldots,\xi_m}}.
\]
The definition of interlacing family in \cite{MSS} requires merely that $\E{}{f_{s_1,\ldots,s_k,\xi}}$ is real-rooted for any partial choice $s_1,\ldots, s_k$ with $k<m$ and any random variable $\xi$ taking values in $S_{k+1}$. Hence, the definition in \cite{MSS} is formally more general than Definition \ref{ifp}, although this is inconsequential for our considerations. Finally, the hypothesis that all polynomials are monic is not essential since it can be replaced by the assumption that they all have a positive leading coefficient. Then, we have the following result \cite[Theorem 2.3]{Bra}, which is a convenient variant of \cite[Theorem 3.6]{MSS}. We include the proof for convenience.

\begin{theorem}\label{maxint} Let $\{f_{s_1,\ldots,s_m} \}_{(s_1,\ldots,s_m) \in S_1 \times \ldots \times S_m}$ be an interlacing family of polynomials. Let $\xi_1,\ldots, \xi_m$ be independent random variables  taking values in sets $S_1,\ldots,S_m$, respectively. Then, with positive probability
\begin{equation}\label{maxint1}
\maxroot f_{\xi_1,\ldots,\xi_m} \le \maxroot \E{}{f_{\xi_1,\ldots,\xi_m}}.
\end{equation}
In other words, there exists an outcome $(s_1,\ldots,s_m) \in S_1 \times \ldots \times S_m$ such that $\P{}{\xi_i=s_i}>0$ for all $i=1,\ldots,m$, and
\[
\maxroot f_{s_1,\ldots,s_m} \le \maxroot \E{}{f_{\xi_1,\ldots,\xi_m}}.
\]
Likewise, with positive probability
\begin{equation}\label{maxint2}
\maxroot f_{\xi_1,\ldots,\xi_m} \ge \maxroot \E{}{f_{\xi_1,\ldots,\xi_m}}.
\end{equation}
\end{theorem}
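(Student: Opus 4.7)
The plan is to walk down the rooted tree of conditional-expectation polynomials $\{f_{s_1,\ldots,s_k}\}$ introduced just before the statement, at each level selecting a child whose largest root does not exceed that of its parent, and arguing by induction that after $m$ steps we reach a leaf $f_{s_1,\ldots,s_m}$ with $\maxroot f_{s_1,\ldots,s_m} \le \maxroot f_\emptyset$. First I would record that every $f_{s_1,\ldots,s_m}$ is monic of degree $d$ by hypothesis, and convex combinations of monic polynomials of degree $d$ are again monic of degree $d$, so each interior node $f_{s_1,\ldots,s_k}$ is monic of degree $d$ as well. This puts all nodes of the tree on equal footing for the application of Lemma~\ref{ell}.

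The crucial point is to verify that Lemma~\ref{ell} is actually applicable at each interior node, i.e.\ that \emph{every} convex combination of the children of $f_{s_1,\ldots,s_k}$ is real-rooted. Fix any partial choice $(s_1,\ldots,s_k) \in S_1\times\cdots\times S_k$ and any probability vector $(p_s)_{s\in S_{k+1}}$. Define new independent random variables $\xi'_1,\ldots,\xi'_m$ by taking $\xi'_i$ to be the point mass at $s_i$ for $i\le k$, letting $\xi'_{k+1}$ take the value $s$ with probability $p_s$, and setting $\xi'_i = \xi_i$ for $i > k+1$. Then
\[
\sum_{s\in S_{k+1}} p_s\, f_{s_1,\ldots,s_k,s}(x) \;=\; \E{}{f_{\xi'_1,\ldots,\xi'_m}(x)},
\]
which is real-rooted by Definition~\ref{ifp} applied to the random variables $\xi'_1,\ldots,\xi'_m$. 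Taking $p_s = \P{}{\xi_{k+1}=s}$ shows that $f_{s_1,\ldots,s_k}$ itself is real-rooted and equals the convex combination of its children, so the hypotheses of Lemma~\ref{ell} are met at every interior node.

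Now I would induct from the root. At $f_\emptyset$, Lemma~\ref{ell} (lower bound) furnishes an index $s_1\in S_1$ with $\maxroot f_{s_1} \le \maxroot f_\emptyset$; since indices of weight zero contribute nothing to the convex combination, we may insist that $\P{}{\xi_1 = s_1} > 0$. Assuming $(s_1,\ldots,s_k)$ have been chosen with each $\P{}{\xi_i = s_i} > 0$ and $\maxroot f_{s_1,\ldots,s_k} \le \maxroot f_\emptyset$, another application of Lemma~\ref{ell} at the node $f_{s_1,\ldots,s_k}$ produces $s_{k+1}\in S_{k+1}$ with $\P{}{\xi_{k+1}=s_{k+1}}>0$ and $\maxroot f_{s_1,\ldots,s_{k+1}} \le \maxroot f_{s_1,\ldots,s_k}$. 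After $m$ such steps the resulting leaf satisfies
\[
\maxroot f_{s_1,\ldots,s_m} \;\le\; \maxroot f_\emptyset \;=\; \maxroot \E{}{f_{\xi_1,\ldots,\xi_m}},
\]
and by independence $\P{}{(\xi_1,\ldots,\xi_m)=(s_1,\ldots,s_m)} = \prod_i \P{}{\xi_i=s_i} > 0$, proving \eqref{maxint1}. Replacing the lower bound in Lemma~\ref{ell} with the upper bound throughout yields \eqref{maxint2} by the identical argument.

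The only point requiring any care is the verification in the middle paragraph that the interlacing-family property descends to every subtree via the point-mass substitution; once that is in hand, the two conclusions are a direct induction.
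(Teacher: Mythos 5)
Your proof is correct and uses the same essential ingredients as the paper: Lemma~\ref{ell} combined with the point-mass substitution into Definition~\ref{ifp} to verify that every convex combination of conditional-expectation polynomials is real-rooted. The only difference is organizational --- you walk the tree top-down, fixing $s_1, s_2, \ldots, s_m$ in order, whereas the paper runs an induction on $m$ that fixes the last coordinate $s_m$ first and recurses on the $(m-1)$-coordinate family $\{f_{s_1,\ldots,s_{m-1},c_i}\}$; by the symmetry of Definition~\ref{ifp} these traversal orders are equivalent.
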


\begin{proof} We proceed by induction over $m$. The case when $m=1$ follows from Lemma \ref{ell}. Next assume that $m>1$. Let $\xi$ be any random variable, which is independent of $\xi_1,\ldots,\xi_{m-1}$, taking values in the set $S_m=\{c_1,\ldots,c_n\}$ with probabilities $t_1,\ldots,t_n$, respectively. By Definition \ref{ifp}, the polynomial
\[
\E{}{f_{\xi_1,\ldots,\xi_{m-1},\xi}} = \sum_{i=1}^n t_i \E{}{f_{\xi_1,\ldots,\xi_{m-1}, c_i}}
\]
is real-rooted. Hence, by Lemma \ref{ell} there exists $1 \le i \le n$ such that 
\[
\maxroot (\E{}{f_{\xi_1,\ldots,\xi_{m-1}, c_i}} )
\le
\maxroot(  \E{}{f_{\xi_1,\ldots,\xi_{m-1},\xi_m}} ).
\]
Applying the inductive hypothesis to the interlacing family of polynomials 
\[
\{f_{s_1,\ldots,s_{m-1},c_i} \}_{(s_1,\ldots,s_{m-1}) \in S_1 \times \ldots \times S_{m-1}}
\]
completes the induction step. This proves \eqref{maxint1}. \eqref{maxint2} is shown in the same way.
\end{proof}

There are two basic examples of interlacing families of polynomials which can be defined using a mixed characteristic polynomial.

\begin{proposition}\label{ifpp}
The following are two examples of interlacing families of polynomials.
\begin{enumerate}
\item
Fix $\ve_1, \ldots, \ve_m \in \R$.
 For a finite subsets $S_1,\ldots, S_m$ of $d\times d$ positive semidefinite matrices consider the family
\[
f_{A_1,\ldots,A_m}(x) = \mu[\ve_1 A_1,\ldots,\ve_m A_m](x),
\qquad \text{where } (A_1,\ldots, A_m) \in S_1 \times \ldots \times S_m.
\]
\item Fix positive semidefinite $d\times d$ matrices $A_1,\ldots,A_m$. For finite subsets $S'_1,\ldots, S'_m \subset \R$, consider the family
\[
g_{\ve_1,\ldots,\ve_m}(x) = \mu[\ve_1 A_1,\ldots,\ve_m A_m](x),
\qquad \text{where } (\ve_1,\ldots, \ve_m) \in S'_1 \times \ldots \times S'_m.
\]
\end{enumerate}
\end{proposition}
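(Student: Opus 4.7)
The plan is to verify both conditions in Definition \ref{ifp} (monic real-rootedness of each member, and real-rootedness of arbitrary mean polynomials) by combining just two ingredients: the real-rootedness statement for signed sums of semidefinite matrices (Lemma \ref{crs}) and the random multilinearization identity (Lemma \ref{mux}).

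For part (i), fix $(A_1,\ldots,A_m)\in S_1\times\ldots\times S_m$. Since each $A_i$ is positive semidefinite and $\ve_i\in\R$, Lemma \ref{crs} immediately implies that $f_{A_1,\ldots,A_m}=\mu[\ve_1 A_1,\ldots,\ve_m A_m]$ is monic and real-rooted of degree $d$. For the mixing condition, let $X_1,\ldots,X_m$ be jointly independent random PSD matrices with $X_i$ taking values in $S_i$, and set $Y_i=\ve_i X_i$. The $Y_i$ are jointly independent finitely-valued random matrices, so Lemma \ref{mux} gives
\[
\E{}{\mu[\ve_1 X_1,\ldots,\ve_m X_m]} = \mu\bigl[\ve_1 \E{}{X_1},\ldots,\ve_m \E{}{X_m}\bigr].
\]
Since each $\E{}{X_i}$ is a convex combination of PSD matrices and is therefore itself PSD, the right-hand side is again covered by Lemma \ref{crs}, hence real-rooted.

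For part (ii), fix $(\ve_1,\ldots,\ve_m)\in S'_1\times\ldots\times S'_m$. By the same invocation of Lemma \ref{crs}, $g_{\ve_1,\ldots,\ve_m}=\mu[\ve_1 A_1,\ldots,\ve_m A_m]$ is monic real-rooted of degree $d$. For the mixing condition, let $\xi_1,\ldots,\xi_m$ be independent scalar random variables with $\xi_i$ supported on $S'_i$, and regard $Y_i=\xi_i A_i$ as jointly independent random matrices. Lemma \ref{mux} yields
\[
\E{}{\mu[\xi_1 A_1,\ldots,\xi_m A_m]} = \mu\bigl[\E{}{\xi_1}\, A_1,\ldots,\E{}{\xi_m}\, A_m\bigr],
\]
because $A_i$ is deterministic and $\mu$ is multi-affine. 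The right-hand side is $\mu[\ve'_1 A_1,\ldots,\ve'_m A_m]$ with $\ve'_i=\E{}{\xi_i}\in\R$, so Lemma \ref{crs} once more gives real-rootedness.

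I do not anticipate a genuine obstacle: the entire argument is an application of two preceding lemmas, and the only bookkeeping point is to group the scalar factor with the matrix factor so that Lemma \ref{mux} is applied to the random matrix arguments $\ve_i X_i$ (resp.\ $\xi_i A_i$), after which the output lands precisely in the hypothesis class of Lemma \ref{crs}.
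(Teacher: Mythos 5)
Your proposal is correct and matches the paper's proof essentially line for line: both verify monic real-rootedness via Lemma \ref{crs} and then use the multilinearization formula of Lemma \ref{mux} to pass the expectation through $\mu$, landing back in the hypothesis class of Lemma \ref{crs}. The only difference is that you spell out explicitly why $\E{}{X_i}$ is PSD and $\E{}{\xi_i}\in\R$, which the paper leaves implicit.
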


\begin{proof}
Let $X_1,\ldots ,X_m$ be independent positive semidefinite $d\times d$ random matrices, which take finitely many values.
By Lemma \ref{mux} 
\[
\E{}{\mu[\ve_1 X_1,\ldots, \ve_m X_m](x)} = \mu[\ve_1 \E{}{X_1}, \ldots, \ve_m \E{}{X_m}](x).
\]
Hence, by Lemma \ref{crs}
the family of monic polynomials $\{f_{A_1,\ldots,A_m}\}$ is interlacing. Likewise, let $\xi_1,\ldots,\xi_m$ be independent real random variables, which take finitely many values. 
By Lemma \ref{mux} 
\[
\E{}{\mu[\xi_1 A_1,\ldots, \xi_m A_m](x)} = \mu[ \E{}{\xi_1}A_1, \ldots,  \E{}{\xi_m}A_m ](x).
\]
Hence, by Lemma \ref{crs} the family $\{g_{\ve_1,\ldots,\ve_m}\}$ is also interlacing. 
\end{proof}

In light of Proposition \ref{ifpp} it might be tempting to consider a more general family of polynomials $\mu[A_1,\ldots,A_m](x)$, where matrices $A_1,\ldots,A_m$ are either positive semidefinite or negative semidefinite. However, Example \ref{hermit} shows that this would not be an interlacing family. Nevertheless, the following theorem gives another example of an interlacing family of polynomials, which will show its importance in Section \ref{S3}.

\begin{theorem}\label{qint} Let $A_1,\ldots,A_m$ be positive semidefinite $d\times d$ matrices. Let $S_1,\ldots,S_m$ be finite subsets of $\R$. For $(\ve_1,\ldots, \ve_m) \in S_1 \times \ldots \times S_m$ define a polynomial
\begin{equation}\label{qint0}
f_{\ve_1,\ldots,\ve_m}(x) = \mu[\ve_1 A_1,\ldots,\ve_m A_m](x)\mu[-\ve_1 A_1,\ldots,-\ve_m A_m](x) \in \R[x].
\end{equation}
Then, $\{f_{\ve_1,\ldots,\ve_m} \}_{(\ve_1,\ldots,\ve_m) \in S_1 \times \ldots \times S_m}$ forms an interlacing family of polynomials.
\end{theorem}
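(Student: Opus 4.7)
The plan is to verify Definition \ref{ifp} directly. Fix independent real-valued random variables $\xi_1,\ldots,\xi_m$ with finite support in $S_1,\ldots,S_m$ respectively. By Lemma \ref{crs}, for every realization $(\ve_1,\ldots,\ve_m)$ each of the two factors of $f_{\ve_1,\ldots,\ve_m}$ is monic and real-rooted of degree $d$, so $f_{\ve_1,\ldots,\ve_m}$ is a monic polynomial in $\R[x]$ of degree $2d$. It therefore suffices to show that $\E{}{f_{\xi_1,\ldots,\xi_m}}$ is real-rooted.

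Using identity \eqref{crs2} from the proof of Lemma \ref{crs} for each of the two mixed characteristic polynomials, I would rewrite
\[
f_{\ve_1,\ldots,\ve_m}(x) = \prod_{i=1}^m (1 - \ve_i \partial_{z_i})(1 + \ve_i \partial_{w_i}) \, D(x,z,w) \bigg|_{z=w=0},
\]
where
\[
D(x,z,w) = \det\bigg(x\bI + \sum_{i=1}^m z_i A_i\bigg) \det\bigg(x\bI + \sum_{i=1}^m w_i A_i\bigg).
\]
Expanding $(1-\ve_i \partial_{z_i})(1+\ve_i \partial_{w_i}) = 1 - \ve_i \partial_{z_i} + \ve_i \partial_{w_i} - \ve_i^2 \partial_{z_i}\partial_{w_i}$ and invoking the independence of the $\xi_i$ (the $i$-th factor depending only on the variables $z_i,w_i$ and on the random variable $\xi_i$), I would conclude
\[
\E{}{f_{\xi_1,\ldots,\xi_m}(x)} = \prod_{i=1}^m T_i \, D(x,z,w) \bigg|_{z=w=0},
\]
where $T_i = 1 - t_i \partial_{z_i} + t_i \partial_{w_i} - s_i^2 \partial_{z_i}\partial_{w_i}$ with $t_i = \E{}{\xi_i}$ and $s_i^2 = \E{}{\xi_i^2}$.

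The punchline comes from Lemma \ref{spd}(ii) applied with the roles of $z_1,z_2$ played by $w_i,z_i$: since Jensen's inequality yields $t_i^2 \le s_i^2$, each operator $T_i$ preserves real stability on $\R[x,z_1,w_1,\ldots,z_m,w_m]$. The base polynomial $D$ is real stable because each of its two determinantal factors is real stable by \cite[Proposition 1.12]{BB} (as cited in the proof of Lemma \ref{crs}), and the product of real stable polynomials is real stable. Applying the operators $T_1,\ldots,T_m$ in turn and then performing the restriction $z=w=0$ all preserve real stability. The resulting univariate polynomial in $\R[x]$ is thus real stable, hence real-rooted, completing the verification.

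The main obstacle is the bookkeeping in the expectation step: one must verify that independence genuinely allows the expectation to be pushed inside the product of operators, and then recognize that the resulting $T_i$ is precisely the second-order operator handled by Lemma \ref{spd}(ii), with the condition $t_i^2 \le s_i^2$ supplied by Jensen. The preparatory lemmas are designed to fit this computation, so once the right differential-operator identity is set up the remaining verification is routine.
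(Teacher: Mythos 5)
Your proof is correct and follows essentially the same route as the paper: you rewrite $f$ via \eqref{crs2} as a product of first-order operators applied to the stable determinantal product, push the expectation inside by independence to obtain the operators $T_i = 1 - \E{}{\xi_i}\partial_{z_i} + \E{}{\xi_i}\partial_{w_i} - \E{}{\xi_i^2}\partial_{z_i}\partial_{w_i}$, and then apply Lemma \ref{spd}(ii) using $\E{}{\xi_i}^2 \le \E{}{\xi_i^2}$. This matches the paper's argument in \eqref{qint1}--\eqref{qint2} step for step.
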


\begin{proof}
Using Lemma \ref{crs} and the formula \eqref{crs2} we deduce that
\begin{equation}\label{qint1}
f_{\ve_1,\ldots,\ve_m}(x) =  
\bigg(\prod_{i=1}^m (1 - \ve_i\partial_{z_i}) (1 + \ve_i \partial_{w_i}) \bigg)
 \det \bigg( x \mathbf I + \sum_{i=1}^m z_i  A_i\bigg)
 \det \bigg( x \mathbf I + \sum_{i=1}^m w_i  A_i \bigg)
  \bigg|_{\genfrac{}{}{0pt}{2}{z_1=\ldots=z_m=0}{w_1=\ldots=w_m=0}}.
\end{equation}
is a real-rooted and monic polynomial of degree $2d$. 
Let $\xi_1,\ldots, \xi_m$ be independent random variables taking values in sets $S_1,\ldots,S_m$, respectively.
Then, we compute the expected value of a random partial differential operator
\begin{equation}\label{qint2}
\begin{aligned}
\E{}{\prod_{i=1}^m (1 - \xi_i \partial_{z_i}) (1 + \xi_i \partial_{w_i}) }
& = \prod_{i=1}^m \E{}{ 1 - \xi_i \partial_{z_i} + \xi_i \partial_{w_i}- \xi_i^2\partial_{z_i}\partial_{w_i} }
\\
&=  \prod_{i=1}^m (1 - \E{}{\xi_i}\partial_{z_i} + \E{}{\xi_i} \partial_{w_i}- \E{}{\xi_i^2}\partial_{z_i}\partial_{w_i}).
\end{aligned}
\end{equation}
Since $\E{}{\xi_i}^2 \le \E{}{\xi_i^2}$, by Lemma \ref{spd} we deduce that the above operator preserves stability. Recall that the polynomial
\[
\det \bigg( x \mathbf I + \sum_{i=1}^m z_i  A_i\bigg)
 \det \bigg( x \mathbf I + \sum_{i=1}^m w_i  A_i \bigg) \in \R[x,z_1,\ldots,z_m,w_1,\ldots,w_m],
 \]
is stable. Hence, $\E{}{f_{\xi_1,\ldots,\xi_m}}$ is a real-rooted polynomial as a restriction of a multivariate polynomial to $z_1=\ldots=z_m=w_1=\ldots=w_m=0$.
\end{proof}

Next we record two lemmas which are consequences of Theorem \ref{maxint} and Proposition \ref{ifpp}(i). A novel contribution here is the elimination of rank one assumption, see \cite[Lemmas 3.10 and 3.11]{MB} or \cite[Corollary 15]{Tao}, as well as the introduction of signs inside the mixed characteristic polynomial.

\begin{lemma}\label{ind}
Let $X$ be a random positive semidefinite $d\times d$ matrix. Let $A_2,\ldots, A_m$ be $d\times d$ deterministic positive semidefinite matrices. Let $\ve_i=\pm 1$, $i=1,\ldots,m$. Then, with positive probability we have
\begin{equation}\label{ind1}
\maxroot(\mu[\ve_1 X,\ve_2 A_2,\ldots,\ve_m A_m]) \le \maxroot(\mu[\ve_1 \E{}{X},\ve_2 A_2,\ldots,\ve_m A_m]).
\end{equation}
Likewise, with positive probability we have
\begin{equation}\label{ind2}
 \maxroot(\mu[\ve_1 \E{}{X},\ve_2 A_2,\ldots,\ve_m A_m]) \le 
 \maxroot(\mu[\ve_1 X,\ve_2 A_2,\ldots,\ve_m A_m]).
\end{equation}
\end{lemma}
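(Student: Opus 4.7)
The plan is to derive Lemma \ref{ind} as a direct application of the interlacing family machinery assembled in this section. I implicitly assume that $X$ takes finitely many values (which is needed to apply Theorem \ref{maxint}, and which matches the standing hypothesis of the random-matrix results in this paper); let $S_1$ denote its finite support, a finite set of $d \times d$ positive semidefinite matrices. For $i = 2, \ldots, m$, I set $S_i = \{A_i\}$, a singleton family of positive semidefinite matrices.

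With the signs $\ve_1, \ldots, \ve_m$ fixed, Proposition \ref{ifpp}(i) tells us that the family
\[
f_{B_1, \ldots, B_m}(x) := \mu[\ve_1 B_1, \ldots, \ve_m B_m](x), \qquad (B_1, \ldots, B_m) \in S_1 \times \ldots \times S_m,
\]
is an interlacing family of monic real-rooted polynomials of degree $d$. I then take $\xi_1 := X$ and, for $i \ge 2$, let $\xi_i$ be the constant random variable equal to $A_i$. These are jointly independent random variables taking values in $S_1, \ldots, S_m$, and Theorem \ref{maxint} (the upper-bound conclusion \eqref{maxint1}) yields an outcome of positive probability for which
\[
\maxroot(\mu[\ve_1 X, \ve_2 A_2, \ldots, \ve_m A_m]) \le \maxroot(\E{}{\mu[\ve_1 X, \ve_2 A_2, \ldots, \ve_m A_m]}).
\]
Lemma \ref{mux}, applied to the jointly independent random matrices $\ve_1 X, \ve_2 A_2, \ldots, \ve_m A_m$ (note that Lemma \ref{mux} needs only finite support and independence, not positivity, so the signs cause no trouble), identifies the expected polynomial on the right-hand side as $\mu[\ve_1 \E{}{X}, \ve_2 A_2, \ldots, \ve_m A_m](x)$. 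This yields \eqref{ind1}. The reverse inequality \eqref{ind2} follows in exactly the same way, using the lower-bound conclusion \eqref{maxint2} of Theorem \ref{maxint} in place of \eqref{maxint1}.

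There is no serious obstacle. The only matters to check are that Proposition \ref{ifpp}(i) still delivers an interlacing family when some of the factors $S_i$ are singletons (which is immediate from the definition, since constant random variables trivially satisfy the independence and finite-support requirements), and that the sign preservation of the multi-affine formula for $\mu$ makes Lemma \ref{mux} applicable to the signed matrices $\ve_i X$, $\ve_i A_i$. Both are built into the statements already proved above, so the argument is essentially a one-shot application of Proposition \ref{ifpp}(i), Theorem \ref{maxint}, and Lemma \ref{mux}.
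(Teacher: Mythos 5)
Your proof is correct and is exactly the argument the paper intends: the paper states Lemma~\ref{ind} without a written-out proof, remarking only that it (together with Lemma~\ref{max}) is a "consequence of Theorem~\ref{maxint} and Proposition~\ref{ifpp}(i)", and you have supplied precisely those details (interlacing family from Proposition~\ref{ifpp}(i) with $S_2,\ldots,S_m$ singletons, the max-root bound from Theorem~\ref{maxint}, and the identification of the expected polynomial via Lemma~\ref{mux}). Your observation that $X$ must implicitly take finitely many values is also correct and consistent with the paper's standing conventions.
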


\begin{lemma}\label{max}
Suppose that $X_1, \dots, X_m$ are jointly independent random positive semidefinite $d\times d$ matrices which take finitely many values.  Let $\ve_i=\pm 1$, $i=1,\ldots,m$. Then with positive probability
\begin{equation}\label{max1}
\maxroot(\mu[\ve_1 X_1,\ldots \ve_m X_m]) \le  \maxroot(\mu[\ve_1 \E{}{X_1}, \ldots, \ve_m \E{}{X_m}]).
\end{equation}
\end{lemma}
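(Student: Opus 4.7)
\textbf{Proof plan for Lemma \ref{max}.} The plan is to apply the interlacing family machinery (Theorem \ref{maxint}) directly to the family introduced in Proposition \ref{ifpp}(i). Since each $X_i$ takes finitely many values, let $S_i$ denote the (finite) support of $X_i$; this is a finite subset of the set of positive semidefinite $d\times d$ matrices. For each $(A_1,\ldots,A_m) \in S_1 \times \cdots \times S_m$ set
\[
f_{A_1,\ldots,A_m}(x) = \mu[\ve_1 A_1,\ldots, \ve_m A_m](x).
\]
By Lemma \ref{crs}, each $f_{A_1,\ldots,A_m}$ is monic real-rooted of degree $d$, and by Proposition \ref{ifpp}(i) applied with the fixed signs $\ve_1,\ldots,\ve_m$, the collection $\{f_{A_1,\ldots,A_m}\}$ is an interlacing family of polynomials.

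Next I would feed this interlacing family into Theorem \ref{maxint}, using the independent random variables $X_1,\ldots, X_m$ in the role of $\xi_1,\ldots,\xi_m$. The conclusion \eqref{maxint1} then yields, with positive probability,
\[
\maxroot(f_{X_1,\ldots,X_m}) \le \maxroot(\E{}{f_{X_1,\ldots,X_m}}).
\]
It remains only to identify the right-hand side. By the multilinearization formula of Lemma \ref{mux} applied to the independent random matrices $\ve_1 X_1,\ldots, \ve_m X_m$ (which also take finitely many values),
\[
\E{}{f_{X_1,\ldots,X_m}(x)} = \E{}{\mu[\ve_1 X_1,\ldots,\ve_m X_m](x)} = \mu[\ve_1 \E{}{X_1},\ldots, \ve_m \E{}{X_m}](x),
\]
and plugging this into the previous inequality gives \eqref{max1}.

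There is no substantive obstacle: the entire argument is a straightforward synthesis of tools already proved earlier in the section. The only points that need light verification are that Proposition \ref{ifpp}(i) really does apply with arbitrary real signs $\ve_i$ (including $\ve_i = -1$) and with matrices drawn from the supports of the $X_i$, both of which are immediate from the statement of Proposition \ref{ifpp}(i) and Lemma \ref{crs}. One could alternatively carry out an inductive proof, stripping off one variable at a time via Lemma \ref{ind}, but the interlacing-family approach above is cleaner and avoids having to chain together $m$ separate positive-probability events by hand.
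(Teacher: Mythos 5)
Your proposal is correct and follows exactly the route the paper intends: the paper states Lemmas \ref{ind} and \ref{max} without written proof, prefacing them with the remark that they are ``consequences of Theorem \ref{maxint} and Proposition \ref{ifpp}(i),'' which is precisely the combination you use (plus Lemma \ref{mux} to identify the expected polynomial, as also done in the proof of Proposition \ref{ifpp}). Nothing is missing; the verification that Proposition \ref{ifpp}(i) allows arbitrary real $\ve_i$ and that $\E{}{\ve_i X_i}=\ve_i\E{}{X_i}$ is as immediate as you say.
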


The following lemma shows the monotonicity of the maximal root of a mixed characteristic polynomial. In contrast with Lemmas \ref{ind} and \ref{max}, this property has not been observed before even in the case when all signs $\ve_i=1$.

\begin{lemma}\label{mono}
Let $A_1,\ldots,A_m$ and $B_1$ be positive semidefinite hermitian $d\times d$ matrices such that $A_1 \le B_1$. Let $\ve_i=\pm 1$, $i=2,\ldots,m$.
Then,
\begin{equation}\label{norm2}
\maxroot({\mu[ A_1,\ve_2 A_2,\ldots,\ve_m A_m]}) \le  \maxroot(\mu[ B_1,\ve_2 A_2,\ldots,\ve_m A_m]).
\end{equation}
Moreover,
\begin{equation}\label{norm2b}
\maxroot({\mu[ -A_1,\ve_2 A_2,\ldots,\ve_m A_m]}) \ge  \maxroot(\mu[ -B_1,\ve_2 A_2,\ldots,\ve_m A_m]).
\end{equation}
\end{lemma}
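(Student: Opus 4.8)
The plan is to deduce both inequalities from the real‑rootedness statement of Lemma~\ref{crs}, the multi‑affinity of $\mu$ (Lemma~\ref{mas}), and the one‑sided sandwiching of the largest root along real‑rooted convex combinations (Lemma~\ref{ell}). Write $R=(\ve_2A_2,\ldots,\ve_mA_m)$ and $C=B_1-A_1\ge 0$; if $C=0$ both claims are trivial, so assume $\tr C>0$. The conceptual point is that one should \emph{not} try to compare $\mu[A_1,R]$ and $\mu[B_1,R]$ directly (Lemma~\ref{ell} only gives a two‑sided bound), but instead view $\mu[B_1,R]$ as an interior point of a segment in matrix space whose far endpoint has an arbitrarily large largest root.

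For \eqref{norm2}, for $T\ge 0$ put $p_T=\mu[A_1+TC,R]$. Since $A_1+TC$ is positive semidefinite, each $p_T$ is a monic real‑rooted polynomial of degree $d$ by Lemma~\ref{crs}. By Lemma~\ref{mas}, $\mu$ is affine in its first argument, so for $0\le t\le T$ one has $\mu[A_1+tC,R]=(1-t/T)p_0+(t/T)p_T$, and every convex combination $\theta p_0+(1-\theta)p_T$ equals $\mu[A_1+(1-\theta)TC,R]$, which is real‑rooted by Lemma~\ref{crs} again. Hence Lemma~\ref{ell} applies to the pair $\{p_0,p_T\}$; taking the convex combination corresponding to $t=1$ (with $T\ge 1$) gives
\[
\maxroot(\mu[B_1,R])=\maxroot(p_1)\ \ge\ \min\bigl(\maxroot(p_0),\,\maxroot(p_T)\bigr).
\]
It remains to observe that $\maxroot(p_T)\to\infty$. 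A routine determinant expansion — the first–order term of $\det(xI+\sum z_iA_i)$ — shows that the coefficient of $x^{d-1}$ in $\mu[D_1,\ldots,D_k]$ is $-\sum_j\tr(D_j)$; applied to $p_T$ this says the $d$ real roots of $p_T$ sum to $\tr(A_1)+\sum_{i\ge 2}\ve_i\tr(A_i)+T\tr(C)$, whence $\maxroot(p_T)\ge\frac1d\bigl(\tr(A_1)+\sum_{i\ge 2}\ve_i\tr(A_i)+T\tr(C)\bigr)\to\infty$ since $\tr C>0$. Choosing $T$ large enough that $\maxroot(p_T)>\maxroot(p_0)$ gives $\maxroot(\mu[B_1,R])\ge\maxroot(\mu[A_1,R])$, which is \eqref{norm2}.

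For \eqref{norm2b} I would first record the reflection identity
\[
\mu[D_1,\ldots,D_m](x)=(-1)^d\,\mu[-D_1,\ldots,-D_m](-x),
\]
immediate from Definition~\ref{dmcp} by the substitution $z_i\mapsto -z_i$ (which turns each $1-\partial_{z_i}$ into $1+\partial_{z_i}$ and, after substituting back, reproduces the mixed characteristic polynomial of the negated matrices). Thus $\maxroot(\mu[-A_1,R])=-\minroot(\mu[A_1,-R])$ with $-R=(-\ve_2A_2,\ldots,-\ve_mA_m)$, so \eqref{norm2b} is equivalent to the assertion that $\minroot(\mu[\,\cdot\,,-R])$ is non‑decreasing along $0\le A_1\le B_1$. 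This is then proved by the mirror image of the argument above: put $q_T=\mu[A_1+TC,-R]$, which is real‑rooted by Lemma~\ref{crs}; by Lemmas~\ref{mas} and~\ref{ell} — the latter applied to $\{q_0,q_T\}$ and to the reflected polynomials, so that it is the \emph{smallest} root that is sandwiched — one gets $\minroot(\mu[B_1,-R])=\minroot(q_1)\ge\min(\minroot(q_0),\minroot(q_T))$, and the same coefficient‑of‑$x^{d-1}$ computation again drives an extreme root of $q_T$ off to infinity as $T\to\infty$. The step needing the most care is precisely this control of the extreme root of the far endpoint $p_T$ (resp.\ $q_T$) as $T\to\infty$ and the verification that it forces the relevant root of $q_1$ to the correct side; for \eqref{norm2} it is the elementary trace identity stated above, and the expectation is that for \eqref{norm2b} the symmetric computation, combined with the reflection identity, plays the same role.
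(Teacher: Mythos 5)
Your proof of \eqref{norm2} is correct, and at bottom it runs on the same mechanism as the paper's, just packaged more cleanly. The paper realizes $\mu[B_1,\ve_2A_2,\ldots,\ve_mA_m]$ as the expected polynomial of a two-valued random first slot (taking the values $\tfrac1p A_1$ and $\tfrac1{1-p}(B_1-A_1)$), invokes Lemma \ref{ind}, and pushes the second outcome's largest root to $+\infty$ via the scaling identity \eqref{norm5} together with $\maxroot\mu[B_1-A_1]=\tr(B_1-A_1)>0$, finally letting $p\to1$. You instead apply Lemma \ref{ell} directly to the segment $\mu[A_1+tC,R]$, justify real-rootedness of all its convex combinations through Lemma \ref{mas} and Lemma \ref{crs}, and blow up the far endpoint $p_T$ through the coefficient-of-$x^{d-1}$ identity; each ingredient (affinity in the first slot, real-rootedness of $\mu[A_1+sC,\ve_2A_2,\ldots]$, the value $-\sum_j\tr D_j$ of that coefficient, hence $\maxroot p_T\ge \tfrac1d(\tr A_1+T\tr C+\sum_{i\ge2}\ve_i\tr A_i)\to\infty$) checks out, and your version even avoids the probabilistic wrapper and the limit $p\to1$.

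The second half, however, has a genuine gap, and you effectively flag it yourself (``the expectation is that \ldots the symmetric computation \ldots plays the same role''). After the (correct) reflection $\maxroot\mu[-A_1,R]=-\minroot\mu[A_1,-R]$, your scheme delivers only $\minroot\mu[B_1,-R]\ge\min(\minroot q_0,\minroot q_T)$, so you must exhibit some $T\ge1$ with $\minroot q_T\ge\minroot q_0$. The trace computation cannot do this: it says the roots of $q_T$ \emph{sum} to $\tr A_1+T\tr C-\sum_{i\ge2}\ve_i\tr A_i$, which bounds the \emph{largest} root of $q_T$ from below but imposes no lower bound at all on its smallest root; it is perfectly consistent with that identity that $\minroot q_T\to-\infty$ (one root near $T\tr C$, another near $-\sqrt T$, say). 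Undoing the reflection, the missing statement is exactly $\maxroot\mu[-A_1-TC,\ve_2A_2,\ldots,\ve_mA_m]\le\maxroot\mu[-A_1,\ve_2A_2,\ldots,\ve_mA_m]$, i.e.\ an instance of the very monotonicity \eqref{norm2b} you are proving (with $B_1=A_1+TC$), so the ``mirror image'' argument is circular at its crucial step. Inequality \eqref{norm2b} needs a different input: in the paper it comes from the other half of \eqref{ell1}, through the reverse inequality \eqref{ind2} of Lemma \ref{ind} applied to $-X_p$, where the far outcome is the large \emph{negative}-semidefinite matrix $\tfrac1{1-p}(A_1-B_1)$, and the substantive work is an upper bound on $\maxroot\mu[-sC,\ve_2A_2,\ldots,\ve_mA_m]$ for large $s$ (argued there via \eqref{norm5} and \eqref{norm6}) — an estimate from above on a maximal root, which traces and sums of roots alone cannot supply, and which your sketch nowhere provides.
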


\begin{proof}
For any $d\times d$ matrices $B_1,\ldots,B_m$ and $t\ne 0$, a simple calculation shows that
\[
\mu[t B_1, \ldots, t B_m](z) = t^d \mu[B_1,\ldots,B_m](z/t) \qquad\text{for any }z\in \C.
\]
Hence, for any $t>0$ we have
\begin{equation}\label{norm5}
\maxroot (\mu[t B_1, \ldots, t B_m]) = 
t \maxroot ( \mu[B_1, \ldots, B_m]) .
\end{equation}
Moreover, for any matrix $B$, by Jacobi's formula we have for $z\ne 0$,
\[
\mu[B](z) =z^d (1-\partial_{z_1}) \det (\mathbf I + z_1 z^{-1} B)|_{z_1=0} = z^d (1- \tr(z^{-1} B)) = z^{d-1}(z- \tr B).
\]
Consequently, for any hermitian matrix $B$ we have
\begin{equation}\label{norm6}
\maxroot \mu[B]= \tr B.
\end{equation}

Without loss of generality we can assume that $A_1 \ne B_1$ in addition to $A_1 \le B_1$. For any $0<p<1$, consider a random matrix $X_p$ which takes value $\frac{1}{p} A_1$ with probability $p$ and value $\frac{1}{p-1}(B_1-A_1)$ with probability $1-p$. Since
$\E{}{X_p} = B_1$, Lemma \ref{ind} yields that with positive probability
\begin{equation}\label{norm7}
\maxroot \mu[ X_p,\ve_2 A_2,\ldots,\ve_m A_m]  \le \maxroot \mu[ B_1,\ve_2 A_2,\ldots,\ve_m A_m] .
\end{equation}
By \eqref{norm5} and \eqref{norm6},
\[
\begin{aligned}
(1-p) & \maxroot  \mu\bigg[ \frac1{1-p}(B_1-A_1), \ve_2 A_2, \ldots, \ve_m A_m\bigg]  \\
 = & \maxroot \mu [B_1-A_1,(1-p)\ve_2 A_2, \ldots, (1-p) \ve_m A_m] \\
 \to & \maxroot \mu[B_1-A_1,\mathbf 0, \ldots,\mathbf 0] = \maxroot \mu[B_1-A_1] = \tr(B_1-A_1)>0 \qquad\text{as }p \to 1.
\end{aligned}
\]
Hence,  
\[
\maxroot  \mu\bigg[ \frac1{1-p}(B_1-A_1), \ve_2 A_2, \ldots, \ve_m A_m\bigg]  \to \infty \qquad \text{as }p \to 1.
\]
Combining this with \eqref{norm7} shows that for $p$ close to 1,
\[
\maxroot \mu\bigg [\frac{1}{p} A_1,\ve_2 A_2,\ldots,\ve_m A_m \bigg] \le \maxroot \mu[B_1,\ve_2 A_2,\ldots, \ve_m A_m].
\]
Letting $p\to 1$ yields \eqref{norm2}.

In order to prove \eqref{norm2b}, observe that 
Lemma \ref{ind} applied for the random variable $-X_p$ yields that with positive probability
\begin{equation}\label{norm7b}
\maxroot \mu[ -X_p,\ve_2 A_2,\ldots,\ve_m A_m]  \ge \maxroot \mu[ -B_1,\ve_2 A_2,\ldots,\ve_m A_m] .
\end{equation}
By a similar argument as above we have
\[
\maxroot  \mu\bigg[ \frac1{1-p}(A_1-B_1), \ve_2 A_2, \ldots, \ve_m A_m\bigg]  \to -\infty \qquad \text{as }p \to 1.
\]
Combining this with \eqref{norm7b} shows that for $p$ close to 1,
\[
\maxroot \mu\bigg [\frac{-1}{p} A_1,\ve_2 A_2,\ldots,\ve_m A_m \bigg] \ge \maxroot \mu[-B_1,\ve_2 A_2,\ldots, \ve_m A_m].
\]
Letting $p\to 1$ yields \eqref{norm2b}.
\end{proof}

\subsection{Multivariate barrier function}

We need a definition of a barrier function which plays a key role in the Marcus-Spielman-Srivastava proof \cite{MSS}.

\begin{definition}
Let $Q \in \R[z_1,\ldots,z_m]$ be a real stable polynomial. 
We say that a point $z\in\R^m$ is {\em above the roots} of $Q$ if 
\[
Q(z+t)>0\qquad\textrm{for all }t=(t_1,\ldots,t_m)\in\R^m, t_i\ge 0.
\]
For $i=1,\ldots,m$, we define the {\em barrier function of $Q$} in direction of variable $z_i$ at $z$ as
\[
\Phi^i_Q(z) = \partial_{z_i} \log Q(z)= \frac{ \partial_{z_i} Q(z)}{Q(z)}.
\]
\end{definition}

For our considerations, we need to recall two fundamental results from \cite{MSS}. The first result describes the analytic properties of the barrier function \cite[Lemma 5.8]{MSS}, see also \cite[Lemma 3.14]{MB}.

\begin{lemma}\label{bar}
Let $Q \in \R[z_1,\ldots,z_m]$ be a real stable polynomials. Let $z\in\R^m$ be above the roots of $Q$. Then, for any $1\le j \le m$, the barrier function $t\mapsto \Phi_Q^i(x+t\mathbf e_j)$ is a non-negative, non-increasing, and convex function of $t\ge 0$.
\end{lemma}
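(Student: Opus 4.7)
The plan is to reduce to a bivariate problem by restriction, handle the diagonal case $i=j$ by direct factorization, and for $i\ne j$ work with a parametrization of the roots in the $z_i$-direction. Since fixing a variable at a real value preserves real stability, I would first fix all coordinates $z_k$ with $k\notin\{i,j\}$ at the values given by $z$, so that $Q$ is effectively a bivariate real stable polynomial in $(z_i,z_j)$. The three conclusions concern only the line through $z$ in direction $\mathbf e_j$, so no information is lost in this reduction.

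For the easy case $i=j$, the univariate restriction $s\mapsto Q(z+s\mathbf e_i)$ is real-rooted by stability, and the above-the-roots hypothesis gives a factorization $c\prod_k(s-\mu_k)$ with $\mu_k<z_i$. Then
\[
\Phi^i_Q(z+t\mathbf e_i) = \sum_k \frac{1}{(z_i+t)-\mu_k}
\]
is a finite sum of positive shifts of $1/t$; each summand is positive, strictly decreasing, and strictly convex on $[0,\infty)$, and all three properties pass to the sum.

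For $i\ne j$, I would parametrize the roots of $s\mapsto Q(z+s\mathbf e_i+t\mathbf e_j)$ as real-valued functions $\alpha_1(t),\ldots,\alpha_d(t)$ (real-valued by stability of the univariate restriction in $s$), yielding
\[
\Phi^i_Q(z+t\mathbf e_j) = \sum_k \frac{1}{z_i-\alpha_k(t)}.
\]
Positivity is immediate from $\alpha_k(t)<z_i$. For monotonicity I would run a standard perturbation argument: at a simple root the implicit function theorem gives an analytic continuation $\alpha_k(t_0+i\varepsilon)=\alpha_k(t_0)+i\varepsilon\alpha_k'(t_0)+O(\varepsilon^2)$, and real stability of $Q$ prevents this perturbed root from entering $\C_+$, forcing $\alpha_k'(t_0)\le 0$; coalescing roots are handled by a continuity argument. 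Hence each summand is non-increasing in $t$ and so is the sum.

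The hard part will be convexity in direction $\mathbf e_j$ when $i\ne j$. This is genuinely a second-order statement, and the individual curves $\alpha_k$ need not be convex, so a termwise argument is not available. I would address this by invoking a stronger consequence of real stability --- in particular, the stability-preservation of the mixed second-order operator in Lemma \ref{spd}(ii) --- to extract an inequality linking $\partial_t^2\partial_s\log Q$ to lower-order derivatives above the roots, and thereby conclude $\partial_t^2\Phi^i_Q\ge 0$. This is the analytic core of the lemma and corresponds to the strategy pursued in \cite[Lemma 5.8]{MSS}.
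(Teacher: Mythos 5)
The reduction to the bivariate case, the $i=j$ computation, and the perturbation argument giving $\alpha_k'(t)\le 0$ (hence non-negativity and monotonicity for $i\ne j$) are all correct and match the standard treatment. The gap is in convexity, and you have correctly sensed where it lies, but the proposed fix does not close it.

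Your parametrization writes $\Phi^i_Q(z+t\mathbf e_j)=\sum_k \frac{1}{z_i-\alpha_k(t)}$, and you rightly observe the individual summands need not be convex: indeed a summand $\frac{1}{z_i-\alpha_k(t)}$ has second derivative with the sign of $\alpha_k''\,(z_i-\alpha_k)+2(\alpha_k')^2$, and a root trajectory $\alpha_k$ of a bivariate real stable polynomial can be strictly concave (e.g.\ for $p(z_1,z_2)=z_1^2+3z_1z_2+2z_2^2-1$ the lower root $\beta(z_2)=\tfrac12(-3z_2-\sqrt{z_2^2+4})$ has $\beta''<0$), so for $z_i$ large enough the summand is locally concave. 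The difficulty is real. However, the appeal to Lemma~\ref{spd}(ii) does not resolve it: that lemma certifies that $1+t\partial_{z_1}-t\partial_{z_2}-s^2\partial_{z_1}\partial_{z_2}$ maps real stable polynomials to real stable polynomials, which is a statement about zero locations, not about the sign of the third-order logarithmic derivative $\partial_{z_j}^2\partial_{z_i}\log Q$ above the roots. No concrete chain of inequalities is offered, and I do not see how to extract one from stability preservation of that operator alone.

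The missing idea in the reference proof (\cite[Lemma~5.8]{MSS}, and the version in \cite[Lemma~3.14]{MB}; see also Tao's exposition \cite{Tao}) is to use a \emph{different} termwise decomposition supplied by the Helton--Vinnikov theorem, which is Theorem~\ref{HV} in this paper. After restricting to the two relevant variables and translating $z$ to the origin, the bivariate restriction $q(y,w)$ is real stable with $(0,0)$ above its roots, so $q(y,w)=\det(C)\det\bigl(\mathbf I+y\tilde A+w\tilde B\bigr)$ with $\tilde A,\tilde B$ positive semidefinite. Then
\[
\Phi^i_Q(z+t\mathbf e_j)\;=\;\tr\!\Bigl((\mathbf I+t\tilde B)^{-1}\tilde A\Bigr)\;=\;\sum_{k=1}^d\frac{c_k}{1+\lambda_k t},
\]
where $\lambda_k\ge 0$ are the eigenvalues of $\tilde B$ and $c_k\ge 0$ are the diagonal entries of $\tilde A$ in the eigenbasis of $\tilde B$. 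Each summand $c_k/(1+\lambda_k t)$ is manifestly non-negative, non-increasing, and convex for $t\ge 0$, and these properties pass to the sum. This is precisely the termwise argument you were hoping for; it just requires the Helton--Vinnikov change of coordinates rather than the root-trajectory coordinates, because the latter mix positivity and curvature in a way that destroys termwise convexity.
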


Using the multivariate barrier argument, Marcus, Spielman, and Srivastava \cite[Theorem 5.1]{MSS} have shown the following upper bound on the roots of the mixed characteristic polynomial. Note the assumption in \cite{MSS} that $\sum_{i=1}^m A_i = \bI$ can be relaxed by the inequality $\sum_{i=1}^m A_i \le \bI$ due to Lemma \ref{mono}.

\begin{theorem}\label{mixed}
Let $\epsilon>0$. Suppose $A_1, \ldots, A_m$ are $d \times d$ positive semidefinite matrices satisfying
\begin{equation}\label{mixed1}
\sum_{i=1}^m A_i \le \bI 
\qquad\text{and}\qquad
\tr(A_i) \le \epsilon \quad\text{for all i}.
\end{equation}
Then, all roots of the mixed characteristic polynomial $\mu[A_1,\ldots, A_m]$ are real and the largest root
is at most $(1 + \sqrt{\epsilon})^2$.
\end{theorem}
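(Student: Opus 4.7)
My plan is as follows. Real-rootedness of $\mu[A_1,\ldots,A_m]$ is an immediate special case of Lemma \ref{crs} applied with all signs $\ve_i=1$, so the content is the bound on the maximum root. The first step is to upgrade the hypothesis $\sum_{i=1}^m A_i \le \bI$ to equality. Write $M = \bI - \sum_{i=1}^m A_i$, which is positive semidefinite, and split $M = M_{m+1}+\ldots+M_{m+k}$ into finitely many positive semidefinite pieces, each with trace at most $\epsilon$ (for instance by grouping eigen-directions of $M$). Unfolding Definition \ref{dmcp} shows that adjoining a zero matrix leaves the mixed characteristic polynomial unchanged, since $(1-\partial_{z_{m+j}})$ acts as the identity on a polynomial that does not depend on $z_{m+j}$. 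Applying Lemma \ref{mono} iteratively (with $\mathbf 0 \le M_{m+j}$) in each new coordinate gives
\[
\maxroot \mu[A_1,\ldots,A_m] = \maxroot \mu[A_1,\ldots,A_m,\mathbf 0,\ldots,\mathbf 0] \le \maxroot \mu[A_1,\ldots,A_m,M_{m+1},\ldots,M_{m+k}],
\]
so it suffices to prove the bound for a family satisfying $\sum_{i=1}^n A_i=\bI$ and $\tr(A_i)\le \epsilon$.

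Next I would run the multivariate barrier argument of Marcus--Spielman--Srivastava on the real stable polynomial $P(z,z_1,\ldots,z_n) := \det(z\bI + \sum_i z_i A_i)$, whose real stability is the Borcea--Br\"and\'en fact already cited in the proof of Lemma \ref{crs}. By definition, $\mu[A_1,\ldots,A_n](z) = \prod_{i=1}^n(1-\partial_{z_i}) P(z,z_1,\ldots,z_n)|_{z_1=\ldots=z_n=0}$. The strategy is to exhibit a point $(x,t,\ldots,t)\in\R^{n+1}$ that lies above the roots of $P$, and to show by induction that after applying each operator $(1-\partial_{z_i})$ and shifting along the $z_i$-axis by an appropriate amount, the resulting polynomial still has a point of the form $(x,t',\ldots,t',0,\ldots,0)$ above its roots. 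This is controlled by tracking the barrier functions $\Phi^0_P$ (in the $z$-direction) and $\Phi^i_P$ (in the $z_i$-direction), and using Lemma \ref{bar}: if $\Phi^i<1$ at a point above the roots, then $(1-\partial_{z_i})$ followed by a shift of size $\delta \ge 1 + 1/(1-\Phi^i)$ keeps us above the roots while increasing $\Phi^0$ only by a controlled amount dictated by convexity.

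The explicit parameter choice from \cite{MSS} is $x=(1+\sqrt{\epsilon})^2$ and $t = 1 + 1/\sqrt{\epsilon}$. At this initial point, using $\sum_i A_i = \bI$ and $\tr(A_i)\le\epsilon$, one verifies the uniform estimates
\[
\Phi^0_P(x,t,\ldots,t) \le \frac{1}{1+\sqrt{\epsilon}}, \qquad \Phi^i_P(x,t,\ldots,t) \le \frac{\sqrt{\epsilon}}{1+\sqrt{\epsilon}},
\]
both strictly less than $1$. The per-step increment budget then exactly matches the shift $t = 1+1/\sqrt{\epsilon}$, so after $n$ applications of $(1-\partial_{z_i})$ paired with a $z_i$-shift of size $t$ we arrive at the point $(x,0,\ldots,0)$ above the roots of the fully differentiated polynomial. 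Since $\mu[A_1,\ldots,A_n]$ is monic and real-rooted, this forces $\maxroot \mu[A_1,\ldots,A_n] \le x = (1+\sqrt{\epsilon})^2$.

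The main obstacle I anticipate is the quantitative bookkeeping in the barrier step: one must show that the combined $\Phi^0$-increment accumulated over all $n$ iterations remains below $1$, and this requires a uniform bound that does not deteriorate with $n$. The trace condition $\tr(A_i)\le\epsilon$ enters precisely at this point, because the initial value of $\Phi^i$ is controlled by $\tr(A_i)/t$, and the per-step $\Phi^0$-increment turns out to match the initial deficit $1-\Phi^i$ through the identity $t(1-\Phi^i) = 1$. Everything else — real-rootedness, reduction to the equality case via Lemma \ref{mono}, and the analytic properties of barrier functions — is a direct appeal to results already established in the excerpt.
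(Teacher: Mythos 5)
Your overall plan coincides with the paper's: real-rootedness comes from Lemma \ref{crs} with all $\ve_i=1$, the inequality $\sum A_i\le \bI$ is upgraded to an equality by splitting $M=\bI-\sum A_i$ into low-trace positive pieces and invoking the monotonicity Lemma \ref{mono} (this is exactly the content of the paper's one-line remark ``the assumption\dots can be relaxed\dots due to Lemma \ref{mono}''), and the equality case is then delegated to the Marcus--Spielman--Srivastava barrier argument. Your explicit matrix-splitting reduction is correct and a useful unpacking of what the paper leaves implicit; using $\mathbf 0 \le M_{m+j}$ with \eqref{norm2}, and the fact that adjoining zero blocks does not change $\mu$, gives precisely the claimed inequality of maximum roots.

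Where your write-up goes astray is in recalling the details of the MSS barrier step, though this does not change the verdict that your route is the paper's route. Specifically: (a) the starting point for $Q_0(x,z_1,\dots,z_n)=\det(x\bI+\sum z_iA_i)$ must have \emph{negative} $z$-coordinates, e.g.\ $(x_0,-\delta,\dots,-\delta)$, because \cite[Lemma 5.11]{MSS} moves the point \emph{up} by $\delta>0$ in each $z_i$ after applying $(1-\partial_{z_i})$, and you need to land at $(x_0,0,\dots,0)$; a downward shift from a positive $t$ is not what that lemma provides. (b) The shift parameter should be $\delta=1+\sqrt{\epsilon}$ rather than $1+1/\sqrt{\epsilon}$; with $x_0=(1+\sqrt\epsilon)^2$ this gives $x_0-\delta=\sqrt\epsilon(1+\sqrt\epsilon)$, hence $\Phi^i_{Q_0}\le\epsilon/(x_0-\delta)=\sqrt\epsilon/(1+\sqrt\epsilon)$ and $\Phi^i+1/\delta=1$, which is the tight MSS condition. (c) Tracking $\Phi^0$ in the $x$-direction and worrying about its accumulated growth is both unnecessary and, as stated, incorrect: at your starting point $\Phi^0_{Q_0}=d/(x+t)$ is dimension-dependent, so the uniform bound $1/(1+\sqrt\epsilon)$ cannot hold; and in any case \cite[Lemma 5.11]{MSS} guarantees that \emph{all} barrier coordinates are non-increasing after each shift-and-differentiate step, so there is no accumulation to control. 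The quantity you want to close the argument is simply that $\Phi^j_{Q_0}+1/\delta\le 1$ at the (correctly placed) initial point, applied iteratively in the $z_j$-directions; the $x$-direction barrier never enters.

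None of this affects the structural soundness of your proposal: the reduction to the equality case is right, and the remaining step is a citation of \cite[Theorem 5.1]{MSS}, which is exactly what the paper does. If you intend to actually reproduce the barrier computation rather than cite it, you should fix the sign of the starting point, replace $1+1/\sqrt\epsilon$ with $1+\sqrt\epsilon$, and drop the $\Phi^0$ bookkeeping.
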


The proof of Theorem \ref{mixed} relies on results controlling roots of a polynomial $(1-\partial_{z_i})Q$ in terms of roots of a real stable polynomial $Q$ and a barrier function $\Phi^i_Q$, see \cite[Lemma 5.10 and Lemma 5.11]{MSS}. Here, we shall show a simple converse of these lemmas.

\begin{lemma}\label{con}
Let $Q \in \R[z_1,\ldots,z_m]$ be a real stable polynomial,  $c\in \R$, and $i=1,\ldots,m$. If $z\in \R^m$ is above roots of $(1+c\partial_{z_i})Q$, then $z+c\mathbf e_i$ is above roots of $Q$.
\end{lemma}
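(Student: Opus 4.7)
The plan is to reduce the multivariate statement to a univariate claim about real-rooted polynomials and then resolve it via the partial-fraction expansion of $p'/p$. Fix any $\tau\in\R^m$ with $\tau\ge 0$ componentwise and $\tau_i=0$, and consider the univariate restriction
\[
p(s) \;=\; Q(z+\tau+s\mathbf e_i)\in\R[s].
\]
Since $Q$ is real stable and the coordinates $z_j+\tau_j$ for $j\neq i$ are real, successive specialization shows $p(s)$ is real-rooted; the same argument applied to $(1+c\partial_{z_i})Q$, which is real stable by Lemma~\ref{spd}, shows $p(s)+cp'(s)$ is real-rooted as well. The hypothesis that $z$ lies above the roots of $(1+c\partial_{z_i})Q$ yields $p(s)+cp'(s)>0$ for all $s\ge 0$. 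Every point of the form $z+c\mathbf e_i+t$ with $t\ge 0$ can be written as $z+\tau+(c+t_i)\mathbf e_i$ for an admissible choice of $\tau$, so the desired conclusion reduces to the univariate statement: \emph{if $p\in\R[s]$ is real-rooted and $p+cp'>0$ on $[0,\infty)$, then $p>0$ on $[c,\infty)$.}

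Write $p(s)=\alpha\prod_{i=1}^d(s-\lambda_i)$ with $\lambda_1\le\cdots\le\lambda_d$; the leading coefficient $\alpha$ must be positive, since otherwise $p+cp'$ (which shares that coefficient) would be negative for large $s$. Let $\mu_d$ denote the largest root of $p+cp'$; the hypothesis forces $\mu_d<0$. The key inequality I aim to establish is
\[
\lambda_d \;\le\; \mu_d+c,
\]
which, combined with $\mu_d<0$, gives $\lambda_d<c$ and hence $p(s)>0$ for every $s\ge c>\lambda_d$. To prove it one may assume, after an infinitesimal perturbation of the coefficients and continuous dependence of roots, that the roots of $p$ are simple and that $\mu_d$ is not one of them. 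Then $(p+cp')(\mu_d)=0$ together with $p(\mu_d)\neq 0$ yields the partial-fraction identity
\[
\sum_{i=1}^d\frac{1}{\mu_d-\lambda_i} \;=\; -\frac{1}{c}.
\]

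The argument now branches on the sign of $c$. For $c>0$, the standard interlacing of the roots of $p$ and of $p+cp'$ places $\mu_d\in(\lambda_{d-1},\lambda_d)$ when $d\ge 2$ (and yields $\mu_1=\lambda_1-c$ directly when $d=1$), so the $d-1$ terms with $i<d$ are positive while $1/(\mu_d-\lambda_d)$ is negative, forcing $1/(\mu_d-\lambda_d)\le -1/c$ and hence $\lambda_d-\mu_d\le c$. For $c<0$ the interlacing is reversed, giving $\mu_d>\lambda_d$, so every term of the sum is positive and $1/(\mu_d-\lambda_d)$ is its largest entry; thus $1/(\mu_d-\lambda_d)\le -1/c=1/|c|$, whence $\mu_d-\lambda_d\ge -c$, i.e.\ $\lambda_d\le\mu_d+c$ once more. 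The reduction to the univariate problem is entirely routine; the only genuine obstacle is this key inequality, whose proof needs the careful sign analysis above together with the standard perturbation that handles repeated roots.
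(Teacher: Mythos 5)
Your proof is correct and performs the same initial reduction as the paper, namely restricting $Q$ along nonnegative shifts in the variables $z_j$, $j\neq i$, to obtain a univariate real-rooted polynomial $p$ with $p+cp'>0$ on $[0,\infty)$, and reducing the lemma to the claim that $\maxroot p<c$. From there, however, you take a genuinely different route. The paper works with the differential inequality directly: a comparison argument applied to $e^{t/c}q(t)$ shows that $q$ has a fixed sign on $[0,\maxroot q)$ (positive if $c<0$, negative if $c>0$), which forces all roots but possibly the largest one to be negative, and then the logarithmic derivative $\sum_j (-c)/(t-\lambda_j)<1$ is evaluated at $t=0$ to conclude $\maxroot q<c$. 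You instead appeal to the classical interlacing of the roots of $p$ and $p+cp'$ (which can itself be deduced by Rolle applied to $e^{t/c}p(t)$) to locate $\mu_d=\maxroot(p+cp')$ relative to $\lambda_d=\maxroot p$, and then evaluate the partial-fraction identity $\sum_i 1/(\mu_d-\lambda_i)=-1/c$ at $\mu_d$ rather than at $0$, extracting the cleaner bound $\lambda_d\le\mu_d+c$ after a standard perturbation to simple roots. Both arguments are sound. What yours buys is an explicit structural inequality relating $\maxroot p$ to $\maxroot(p+cp')$, which the paper never isolates; what it costs is an extra appeal to interlacing and the perturbation/continuity step, neither of which the paper's direct sign analysis requires.
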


\begin{proof} First, we reduce the proof to univariate case. Fix any $t_j \ge 0$, $j\in [m] \setminus \{i\}$. Consider univariate restriction of $Q$ given by
\[
q(t) = Q(z_1+t_1, \ldots, z_{i-1}+t_{i-1}, z_i+t, z_{i+1}+t_{i+1}, \ldots, z_m+t_m), \qquad t\in \R.
\]
Since $q$ is a restriction of a real stable polynomial $Q$, a polynomial $q$ is real-rooted.
It suffices to show that $c$ is above roots of $q$. Indeed, since $t_j \ge 0$ are arbitrary, this implies that $z+c\mathbf e_i$ is above roots of $Q$.

We can assume that $c\ne 0$, since the conclusion is trivial for $c=0$.
Since $z$ is above the roots of $Q$, the leading coefficient of $q$ is positive. By our hypothesis, $0$ is above roots of $q+cq'$. Hence, 
\begin{equation}\label{qqp}
q(t)> -c q'(t) \qquad\text{for all } t \ge 0.
\end{equation}
Let $\lambda_1, \ldots, \lambda_r$ be all roots of $q$ with multiplicities. Since $q$ is a restriction of a real stable polynomial $Q$, they are all real. The proof now splits depending on the sign of $c$.

First suppose that $c<0$. If $q(t_0)=0$ for some $t_0 \ge 0$, then \eqref{qqp} would imply that $q(t) \le 0$ for all $t \ge t_0$. This contradicts the fact that the leading coefficient of $q$ is positive. Hence, $0$ is above the roots of $q$. In other words, $\maxroot q = \max_j \lambda_j<0$ and $q(t) > 0 $ for all $t\ge 0$.
It remains to show that actually $c$ is above roots of $q$.
By \eqref{qqp} we have
\[
\frac{-c q'(t)}{q(t)} = \sum_{j=1}^r \frac{-c}{t-\lambda_j}<1 \qquad\text{for }t\ge 0.
\]
Setting $t=0$ implies that $\lambda_j<c$ for all $j=1,\ldots,r$. Hence, $c$ is above roots of $q$.

Next suppose that $c>0$. Without loss of generality, we can assume that $\lambda_1= \maxroot q = \max_i \lambda_i$. If $\lambda_1 \le 0$, then there is nothing to show. Otherwise, \eqref{qqp} implies that $q(t)<0$ for all $0 \le t <\lambda_1$. Hence, $\lambda_j<0$ for all $j=2,\ldots,m$. By \eqref{qqp} we have
\[
1<\frac{-c q'(t)}{q(t)} = \sum_{j=1}^r \frac{-c}{t-\lambda_j}< \frac{-c}{t-\lambda_1} \qquad\text{for }0 \le t <\lambda_1.
\]
Setting $t=0$ implies that $\lambda_1<c$. Hence, $c$ is above roots of $q$.
\end{proof}

Next we shall establish above the roots properties of multivariate polynomials appearing in Definition \ref{dmcp} of mixed characteristic polynomials. For a collection of $d\times d$ matrices $A_1,\ldots, A_m$, define its determinantal polynomial
\begin{equation}\label{Q0}
Q_0(x,z_1,\ldots,z_m)= \det\bigg( x \mathbf I + \sum_{i=1}^m z_i A_i \bigg) \in \C[x,z_1,\ldots,z_m].
\end{equation}
 For a fixed choice of signs $\ve_i = \pm 1$, define polynomials
\begin{equation}\label{Qm}
Q_k= \prod_{i=1}^k (1-\ve_i\partial_{z_i}) Q_0, \qquad k=1, \ldots,m.
\end{equation}

\begin{lemma}\label{abp} Let $A_1,\ldots,A_m$ be 
positive semidefinite hermitian $d\times d$ matrices. Let $\ve_i=\pm 1$ for $i=1,\ldots,m$.
If $x_0>0$ and $x_0> \maxroot \mu[\ve_1A_1,\ldots,\ve_mA_m]$, then the point $(x_0,0,\ldots,0)\in \R^{m+1}$ is above roots of the polynomial $Q_m$ given by \eqref{Qm}.
\end{lemma}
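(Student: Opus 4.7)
The plan is to show that $Q_m(x_0+t_0, t_1, \ldots, t_m) > 0$ for all $t_0, t_1, \ldots, t_m \ge 0$, by reducing the claim to a bound on the largest root of an auxiliary mixed characteristic polynomial.

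First, I would apply the substitution $w_i = z_i - t_i$ in the definition of $Q_m$ to absorb the shifts into the base of the determinant. Setting $M := (x_0+t_0)\bI + \sum_{i=1}^m t_i A_i$, which is positive definite since $x_0 > 0$, $t_i \ge 0$, and $A_i \succeq 0$, this gives
\[
Q_m(x_0+t_0, t_1, \ldots, t_m) = \prod_{i=1}^m(1-\ve_i\partial_{w_i})\det\biggl(M + \sum_{i=1}^m w_i A_i\biggr)\bigg|_{w=0}.
\]
Using the identity $\det(M + \sum_i w_i A_i) = \det(M)\det(\bI + \sum_i w_i B_i)$ with $B_i := M^{-1/2}A_i M^{-1/2} \succeq 0$, this becomes
\[
Q_m(x_0+t_0, t_1, \ldots, t_m) = \det(M)\cdot \mu[\ve_1 B_1, \ldots, \ve_m B_m](1).
\]
Since $\det(M) > 0$ and by Lemma~\ref{crs} the polynomial $\mu[\ve_1 B_1, \ldots, \ve_m B_m]$ is monic and real-rooted of degree~$d$, it suffices to prove $\maxroot \mu[\ve_1 B_1, \ldots, \ve_m B_m] < 1$. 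By the scaling formula~\eqref{norm5}, the hypothesis $\maxroot\mu[\ve_1 A_1, \ldots, \ve_m A_m] < x_0$ is equivalent to $\maxroot\mu[\ve_1 A_1/x_0, \ldots, \ve_m A_m/x_0] < 1$, which is precisely the desired bound in the special case $M = x_0\bI$.

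The main obstacle is to establish the general inequality
\[
\maxroot \mu[\ve_1 B_1, \ldots, \ve_m B_m] \le \maxroot \mu[\ve_1 A_1/x_0, \ldots, \ve_m A_m/x_0].
\]
Although $M \ge x_0 \bI$, the matrices $B_i = M^{-1/2}A_i M^{-1/2}$ and $A_i/x_0$ do not in general compare in the positive semidefinite order, so Lemma~\ref{mono} does not apply directly. To circumvent this, I would use the continuous deformation $M(s) = x_0\bI + s(M - x_0\bI)$ for $s \in [0,1]$, which interpolates $B_i(0) = A_i/x_0$ and $B_i(1) = B_i$, and combine continuity of the largest root as a function of $s$ with Lemma~\ref{mono} to preclude that $\maxroot \mu[\ve_1 B_1(s), \ldots, \ve_m B_m(s)]$ can reach~$1$ as $s$ increases from~$0$ to~$1$.
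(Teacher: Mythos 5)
Your algebraic reduction is correct: absorbing the translation $z_i \mapsto z_i + t_i$ into the base matrix $M = (x_0+t_0)\mathbf I + \sum_i t_i A_i$ and factoring out $\det(M)$ does give
\[
Q_m(x_0+t_0,t_1,\ldots,t_m) = \det(M)\,\mu[\ve_1 B_1,\ldots,\ve_m B_m](1), \qquad B_i = M^{-1/2}A_i M^{-1/2},
\]
and since each $\mu[\ve_1 B_1,\ldots,\ve_m B_m]$ is monic, real-rooted of degree $d$ by Lemma~\ref{crs}, proving $\maxroot\mu[\ve_1 B_1,\ldots,\ve_m B_m]<1$ would indeed suffice. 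The base case $t=0$, where $M=x_0\mathbf I$, is handled correctly via the scaling formula~\eqref{norm5}.

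However, the step you yourself flag as the ``main obstacle'' is a genuine gap, and the proposed fix does not close it. Lemma~\ref{mono} requires a positive semidefinite ordering between the matrices being compared, and along the path $B_i(s)=M(s)^{-1/2}A_iM(s)^{-1/2}$ neither $B_i(s')\le B_i(s)$ nor the reverse holds for $s'<s$ in general (the congruence $X\mapsto X^{-1/2}A X^{-1/2}$ is not operator monotone in $X$, even though $M(s')\le M(s)$). So Lemma~\ref{mono} simply cannot be invoked anywhere along the deformation. Continuity of the largest root in $s$ only tells you that the maxroot is a continuous function; it provides no obstruction to it crossing~$1$, which is precisely what needs to be ruled out. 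In fact, the assertion ``$\maxroot\mu[\ve_1 B_1(s),\ldots]<1$ for all $s\in[0,1]$ and all $t\ge 0$'' is, after unwinding the substitution, exactly the statement of the lemma itself, so the deformation argument as described is circular.

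By contrast, the paper's proof has two distinct pieces, both of which your approach lacks a substitute for: (1) using Lemma~\ref{mono} in the variable \emph{order} to show that every intermediate polynomial $Q_k$ is positive at the origin $(x_0,0,\ldots,0)$ --- here the PSD comparisons are with the zero matrix, which is always valid --- and then (2) an induction over $k$ that propagates the ``above the roots'' property from $Q_k$ to $Q_{k+1}=(1-\ve_{k+1}\partial_{z_{k+1}})Q_k$ using the monotonicity (for $\ve_{k+1}=1$) and non-negativity (for $\ve_{k+1}=-1$) of the barrier function $\Phi^{k+1}_{Q_k}$ from Lemma~\ref{bar}. The barrier function is the mechanism that lets you move into the positive orthant, and your proposal does not invoke it or any replacement. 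To salvage your route, you would need an argument comparing $\maxroot\mu[\ve_1 B_1,\ldots,\ve_m B_m]$ to $\maxroot\mu[\ve_1 A_1/x_0,\ldots,\ve_m A_m/x_0]$ that does not rest on a PSD comparison of the $B_i$ with the $A_i/x_0$, and such an argument is likely to reproduce the barrier estimates in disguise.
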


\begin{proof} By Definition \ref{dmcp} and \eqref{Qm} for any $k=1,\ldots,m$ we have
\begin{equation}\label{Qm2}
\mu[\ve_1A_1,\ldots,\ve_k A_k](z) = Q_k(z,0,\ldots,0), \qquad z\in\C.
\end{equation}
By permuting matrices $A_i$ we can assume that for some $l=0,\ldots, m+1$ we have 
\[
\ve_i=\begin{cases} -1 & i=1,\ldots,l,
\\
1 & i=l+1,\ldots,m.
\end{cases}
\]
Here, $l=0$ or $l=m$ means that all $\ve_i$ are equal to $1$ or $-1$, respectively.
By Lemma \ref{mono} for any $k=l,\ldots,m$, we have 
\[
\begin{aligned}
\maxroot \mu[\ve_1 A_1,\ldots, \ve_m A_m] &\ge \maxroot \mu[\ve_1 A_1,\ldots,\ve_k A_k,\mathbf 0,\ldots,\mathbf 0] 
\\
&= \maxroot \mu[\ve_1 A_1,\ldots,\ve_k A_k],
\end{aligned}
\]
where $\mathbf 0$ is $d\times d$ zero matrix. Likewise, for any $k=1,\ldots,l$,
\[
\maxroot \mu[\ve_1 A_1,\ldots, \ve_k A_k] \le \maxroot \mu[\mathbf 0,\ldots,\mathbf 0] =0.
\]
Hence, the assumption that $x_0> \maxroot \mu[\ve_1A_1,\ldots,\ve_mA_m]$ and $x_0>0$, and \eqref{Qm2} yield that
\begin{equation}\label{qk}
Q_{k}(x_0,0,\ldots,0) >0 \qquad \text{for } k=1,\ldots,m.
\end{equation}

By induction we will show that $(x_0,0 \ldots,0)$ is above roots of $Q_k$ for any $k=0,1,\ldots,m$. Since $x_0>0$, the point $(x_0,0,\ldots,0)$ is above roots of $Q_0$. Suppose that the induction hypothesis is true for $Q_k$ some $k=0,1,\ldots,m-1$. That is, for any $t_j \ge 0$, $j=1,\ldots,m$, we have
\begin{equation}\label{qk3}
Q_{k}(x_0,t_1,\ldots,t_m) >0.
\end{equation}
We claim that
\begin{equation}\label{qk2}
\frac{(1-\ve_{k+1} \partial_{z_{k+1}})Q_k(x_0,t_1,\ldots, t_m)}{Q_k(x_0,t_1,\ldots, t_m)} = 1- \ve_{k+1}\Phi^{k+1}_{Q_k}(x_0,t_1,\ldots, t_m) >0.
\end{equation}
Indeed, by \eqref{qk} and $Q_{k+1}=(1-\ve_{k+1}\partial_{z_{k+1}})Q_k$, the above is true when $(t_1,\ldots,t_m)= (0, \ldots, 0)$. When $\ve_{k+1}=1$, by the monotonicity of the barrier function in Lemma \ref{bar}, we deduce the inequality \eqref{qk2} holds for arbitrary $t_j \ge 0$, $j=1,\ldots,m$. When $\ve_{k+1}=-1$, we deduce the same conclusion using the non-negativity of the barrier function. Either way, we conclude that $(x_0,0,\ldots,0)$ is above roots of $Q_{k+1}$.
\end{proof}

The following result is the last ingredient needed for the proof of Theorem \ref{thmp}. The proof of Lemma \ref{norm} is somewhat reminiscent of the proof of \cite[Theorem 5.1]{MSS}, which inductively deduces the above roots property of $Q_{k+1}$ from the same property for $Q_k$ and the convexity of barrier functions.

\begin{lemma}\label{norm}
If $A_1,\ldots,A_m$ are positive semidefinite hermitian $d\times d$ matrices and $\ve_i=\pm 1$, then
\begin{equation}\label{norm3}
\bigg\| \sum_{i=1}^m \ve_i A_i \bigg\| \le \maxroot( \mu[\ve_1 A_1,\ldots, \ve_m A_m] \cdot  \mu[-\ve_1 A_1,\ldots, -\ve_m A_m]).
\end{equation}
\end{lemma}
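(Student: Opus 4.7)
Since $S=\sum_{i=1}^m\ve_iA_i$ is hermitian, $\|S\|=\max(\lambda_{\max}(S),\lambda_{\max}(-S))$. Write $\mu_1=\mu[\ve_1A_1,\ldots,\ve_mA_m]$ and $\mu_2=\mu[-\ve_1A_1,\ldots,-\ve_mA_m]$; both are real-rooted by Lemma \ref{crs}, so $\maxroot(\mu_1\mu_2)=\max(\maxroot\mu_1,\maxroot\mu_2)$. The plan is to prove the two one-sided bounds $\lambda_{\max}(S)\le\max(\maxroot\mu_1,0)$ and $\lambda_{\max}(-S)\le\max(\maxroot\mu_2,0)$, and then remove the stray $0$ by a short symmetry argument.

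The heart of the argument is to show that any $x_0>\max(\maxroot\mu_1,0)$ satisfies $x_0>\lambda_{\max}(S)$. By Lemma \ref{abp} such an $x_0$ makes $(x_0,0,\ldots,0)\in\R^{m+1}$ above the roots of $Q_m=\prod_{i=1}^m(1-\ve_i\partial_{z_i})Q_0$. I then peel off the operators one variable at a time by iterating Lemma \ref{con} in the $z_i$-direction with $c=-\ve_i$: each application drops $Q_k$ to $Q_{k-1}$ and shifts the $z_i$-coordinate from $0$ to $-\ve_i$. After $m$ steps the point $(x_0,-\ve_1,\ldots,-\ve_m)$ is above the roots of the determinantal polynomial $Q_0(x,z_1,\ldots,z_m)=\det(xI+\sum_{i=1}^m z_iA_i)$. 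In particular, for every $s\ge0$,
\[
\det((x_0+s)I-S)=Q_0(x_0+s,-\ve_1,\ldots,-\ve_m)>0,
\]
which forces $\lambda_{\max}(S)<x_0$: any eigenvalue $\lambda\ge x_0$ of $S$ would give a zero of the determinant at $s=\lambda-x_0\ge 0$. Taking the infimum over $x_0$ yields $\lambda_{\max}(S)\le\max(\maxroot\mu_1,0)$, and applying the same argument with $\ve_i$ replaced by $-\ve_i$ yields $\lambda_{\max}(-S)\le\max(\maxroot\mu_2,0)$.

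To discard the trailing $0$, I invoke the scaling identity $\mu[tB_1,\ldots,tB_m](z)=t^d\mu[B_1,\ldots,B_m](z/t)$ already used in the proof of Lemma \ref{mono}. Taking $t=-1$ and $B_i=\ve_iA_i$ gives $\mu_2(z)=(-1)^d\mu_1(-z)$, so the roots of $\mu_2$ are exactly the negatives of the roots of $\mu_1$; hence $\maxroot\mu_2=-\minroot\mu_1$ and consequently $\max(\maxroot\mu_1,\maxroot\mu_2)=\max(\maxroot\mu_1,-\minroot\mu_1)\ge 0$. This renders the $0$ redundant and delivers \eqref{norm3}. The main obstacle in executing this plan is the bookkeeping in the iterated application of Lemma \ref{con}: at each step one must check that the new point lies above the roots of the next polynomial down the chain and that the coordinate shifts accumulate correctly to the prescribed signs $(-\ve_1,\ldots,-\ve_m)$; once this is set up, the eigenvalue bound falls out purely from the positivity of $Q_0$ along the ray $s\mapsto(x_0+s,-\ve_1,\ldots,-\ve_m)$.
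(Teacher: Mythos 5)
Your proposal is correct and follows essentially the same route as the paper: invoke Lemma \ref{abp} to place $(x_0,0,\ldots,0)$ above the roots of $Q_m$, peel off the operators $(1-\ve_i\partial_{z_i})$ one at a time via Lemma \ref{con} to land at $(x_0,-\ve_1,\ldots,-\ve_m)$ above the roots of $Q_0$, conclude positivity of $\det(xI-S)$ for $x\ge x_0$, and then repeat with $\ve_i\mapsto-\ve_i$, using the reflection identity $\mu_2(z)=(-1)^d\mu_1(-z)$ to tie the two bounds together. The only cosmetic difference is that you establish the positivity condition $x_0>0$ a posteriori by noting $\max(\maxroot\mu_1,\maxroot\mu_2)\ge0$, whereas the paper verifies it up front from $x_0>\maxroot(\mu_1\mu_2)$; the substance is identical.
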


\begin{proof} 
Take any $x_0>  \maxroot( \mu[\ve_1 A_1,\ldots, \ve_m A_m] \cdot  \mu[-\ve_1 A_1,\ldots, -\ve_m A_m])$. 
Observe that $\mu[-\ve_1 A_1,\ldots, -\ve_m A_m] (z) = (-1)^d \mu[\ve_1 A_1,\ldots, \ve_m A_m] (-z)$
for any $z\in \C$. Hence,
\[
\begin{aligned}
\maxroot \mu[-\ve_1 A_1,\ldots, -\ve_m A_m] & = - \minroot \mu[\ve_1 A_1,\ldots, \ve_m A_m] 
\\
&\ge - \maxroot \mu[\ve_1 A_1,\ldots, \ve_m A_m] .
\end{aligned}
\]
This implies that $x_0>0$.

Let $Q_0$ be the polynomial given by \eqref{Q0}.
 By Lemma \ref{abp} the point $(x_0,0,\ldots,0) \in \R^{m+1}$ is above roots of polynomial $Q_m$ given by \eqref{Qm}. We shall apply repeatedly Lemma \ref{con}. Since $Q_{m}=(1-\ve_m \partial_{z_m})Q_{m-1}$, the point $(x_0,0, \ldots, 0,-\ve_m)$ is above roots of $Q_{m-1}$.  Applying Lemma \ref{con} for $Q_{m-1}=(1-\ve_{m-2} \partial_{z_{m-1}})Q_{m-2}$ shows the point $(x_0,0, \ldots, 0,-\ve_{m-1},-\ve_m)$ is above roots of $Q_{m-2}$. Repeating this argument shows that the point $(x_0,-\ve_1,\ldots,-\ve_m) \in \R^{m+1}$ is above roots of $Q_0$. Hence,
\begin{equation}\label{norm15}
Q_0(x,-\ve_1,\ldots,-\ve_m) = \det \bigg(x \mathbf I - \sum_{i=1}^m \ve_i A_i \bigg) >0 \qquad\text{for all }x\ge x_0.
\end{equation}
Using the assumption that $x_0> \maxroot \mu[-\ve_1 A_1,\ldots, -\ve_m A_m]$, by the symmetric argument we deduce that
\begin{equation}\label{norm17}
\det \bigg(x \mathbf I + \sum_{i=1}^m \ve_i A_i \bigg) >0 \qquad\text{for all }x\ge x_0.
\end{equation}
Since matrices $A_i$ are hermitian, \eqref{norm15} and \eqref{norm17} yield  
\[
x_0 > \maxroot\det \bigg(x \mathbf I - \sum_{i=1}^m \ve_i A_i \bigg)\bigg(x \mathbf I + \sum_{i=1}^m \ve_i A_i \bigg) =
\bigg\| \sum_{i=1}^m \ve_i A_i \bigg\|.
\]
This proves \eqref{norm3}.
\end{proof}

As an immediate corollary of Lemma \ref{norm}, we have the following norm bound for the sum of positive semidefinite matrices $A_1,\ldots, A_m$ 
\begin{equation}\label{norm10}
\bigg\| \sum_{i=1}^m  A_i \bigg\| \le \maxroot \mu[ A_1,\ldots, A_m] .
\end{equation}
We are now ready to prove Theorem \ref{thmp}. Note the proof does not use the full strength of our results as it relies on the special case of interlacing family of polynomials when all signs $\ve_1=\ldots=\ve_m=1$ in Lemma \ref{max}.

\begin{proof}[Proof of Theorem \ref{thmp}]
The assumption \eqref{thmp1} translates into the assumption \eqref{mixed1} for $A_i=\E{}{X_i}$. Applying \eqref{norm10}, Lemma \ref{max}, and then Theorem \ref{mixed} yields with positive probability
\[
 \left\| \sum_{i=1}^m X_i \right\| \le \maxroot(\mu[X_1,\ldots, X_m]) \le \maxroot(\mu[A_1,\ldots, A_m]) \le (1+\sqrt{\epsilon})^2.
\]
Actually, the first inequality holds with probability $1$, the last inequality is deterministic, and only the middle inequality happens with positive probability by the virtue of Lemma \ref{max}.
\end{proof}

It is worth adding that Theorem \ref{thmp} can be improved when the control on ranks of random matrices $X_i$ is assumed. Theorem \ref{mixed2} was first shown by the author, Casazza, Marcus, and Speegle \cite[Theorem 1.5]{BCMS} when rank $k=2$. The bound for higher ranks was shown by Xu, Xu, and Zhu \cite[Theorem 1.9]{XXZ2} using multivariate barrier argument introduced by Leake and Ravichandran \cite{RL}. A similar result in a general setting of hyperbolic polynomials and hyperbolicity cones, albeit with weaker bounds for small values of $\epsilon$, was shown by Br\"anden \cite[Theorem 5.6]{Bra}.

\begin{theorem}\label{mixed2}
Let $k\in \N$ and $0<\epsilon\le (k-1)^2/k$. Suppose $A_1, \ldots, A_m$ are $d \times d$ positive semidefinite matrices of rank at most $k$ satisfying
\begin{equation*}
\sum_{i=1}^m A_i \le \bI 
\qquad\text{and}\qquad
\tr(A_i) \le \epsilon \quad\text{for all i}.
\end{equation*}
Then, 
\[
\maxroot \mu[A_1,\ldots, A_m]\le \bigg( \sqrt{1-\frac{\epsilon}{k-1}}+\sqrt{\epsilon}\bigg)^2.
\]
\end{theorem}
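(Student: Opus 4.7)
The plan is to adapt the multivariate barrier method of Marcus, Spielman, and Srivastava used in the proof of Theorem \ref{mixed}, enhanced by a rank-sensitive refinement in the spirit of Leake and Ravichandran \cite{RL}. As in the rank-unrestricted case, one starts from the determinantal polynomial $Q_0$ in \eqref{Q0}, which is real stable by \cite[Proposition 1.12]{BB}. The crucial new feature under the hypothesis $\rank(A_i)\le k$ is that the degree of $Q_0$ in each variable $z_i$ is now at most $k$, not merely at most $d$. This bounded-degree property is preserved by each of the operators $(1-\partial_{z_j})$ for $j\ne i$, so every intermediate polynomial $Q_\ell = \prod_{i=1}^\ell (1-\partial_{z_i})Q_0$ has degree at most $k$ in each remaining variable throughout the barrier process.

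The heart of the proof is a strengthened barrier update lemma: if $z\in\R^{m+1}$ is above the roots of a real stable polynomial $Q$ whose degree in $z_i$ is at most $k$, and the barrier value $\phi=\Phi_Q^i(z)$ satisfies an appropriate smallness condition, then there is a shift $\delta>0$ such that $z+\delta\mathbf{e}_i$ is above the roots of $(1-\partial_{z_i})Q$, and the barriers in the remaining directions $j\ne i$ obey a quantitative estimate of the form
\[
\Phi_{(1-\partial_{z_i})Q}^{j}(z+\delta\mathbf{e}_i) \;\le\; \Phi_Q^j(z) \;+\; \frac{-\partial_{z_j}\Phi_Q^i(z)}{1-\phi-\tfrac{1}{k-1}\phi^2}.
\]
The extra term $\tfrac{1}{k-1}\phi^2$ in the denominator is the rank-$k$ gain over \cite[Lemmas 5.10 and 5.11]{MSS}. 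It arises because, viewing $z_i\mapsto Q(z)$ as a real-rooted univariate polynomial of degree at most $k$, the Newton inequalities yield a sharper lower bound on the second-order barrier $-\partial_{z_i}^2\log Q$ in terms of $\phi^2/(k-1)$, strengthening the crude bound used in the rank-unrestricted case.

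With the refined update in hand one runs the standard diagonal barrier argument. Choose $x_0>0$ and a uniform shift $t>0$ so that the point $(x_0,t,\ldots,t)\in\R^{m+1}$ is above the roots of $Q_0$ and so that the hypotheses \eqref{mixed1} translate into the quantitative estimates $\Phi^x_{Q_0}(x_0,t,\ldots,t)\le 1/(x_0-1)$ and $\Phi^{z_i}_{Q_0}(x_0,t,\ldots,t)\le \epsilon/t$ uniformly in $i$. Applying the refined update $m$ times along the $z_1,\ldots,z_m$ directions, the point $(x_0,t,\ldots,t)$ remains above the roots of $Q_m$ at each step, so by restricting to $z_1=\cdots=z_m=0$ (justified by the monotonicity of the barrier function in Lemma \ref{bar} combined with a limiting argument using Lemma \ref{mono} to send $t\downarrow 0$) one obtains $\maxroot \mu[A_1,\ldots,A_m]\le x_0$. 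Optimizing $x_0$ over admissible choices of $(t,\phi)$ subject to the update invariants gives, after a routine calculus exercise,
\[
x_0 \;=\; \bigg(\sqrt{1-\tfrac{\epsilon}{k-1}}+\sqrt{\epsilon}\,\bigg)^{2},
\]
and the hypothesis $\epsilon\le(k-1)^2/k$ is precisely what is needed to ensure that the feasibility region for this optimization is nonempty.

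The main obstacle will be establishing the refined barrier update inequality with the correct rank-dependent constant $1/(k-1)$; this is the technical core of the Leake--Ravichandran method \cite{RL} as adapted to the matrix setting by Xu, Xu, and Zhu \cite{XXZ2}, and it genuinely does not follow from the one-parameter barrier arguments used in Section \ref{S2}. Once this sharper update is in place, the remainder of the proof (the diagonal trajectory, the optimization, and the passage from $t>0$ to $t=0$) is a direct adaptation of the framework already developed for Theorem \ref{mixed}.
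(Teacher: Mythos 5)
The paper does not actually prove Theorem~\ref{mixed2}: it states the result and attributes it to \cite[Theorem 1.5]{BCMS} for $k=2$ and to \cite[Theorem 1.9]{XXZ2}, via the Leake--Ravichandran barrier method \cite{RL}, for general $k$. So there is no in-paper proof for your attempt to be measured against; the correct ``proof'' from the paper's standpoint is a citation.

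That said, your sketch does identify the right circle of ideas (the one used in the cited references), but it is not a proof, and you yourself flag the central gap: the refined barrier-update lemma with the rank-dependent constant $1/(k-1)$ is asserted, not established, and this is precisely the nontrivial content of \cite{RL} and \cite{XXZ2}. Two further points of concern. First, the displayed update inequality
\[
\Phi_{(1-\partial_{z_i})Q}^{j}(z+\delta\mathbf{e}_i) \;\le\; \Phi_Q^j(z) \;+\; \frac{-\partial_{z_j}\Phi_Q^i(z)}{1-\phi-\tfrac{1}{k-1}\phi^2}
\]
does not obviously encode a ``gain'': since $-\partial_{z_j}\Phi_Q^i(z)\ge 0$ by Lemma~\ref{bar}, shrinking the denominator from $1-\phi$ to $1-\phi-\tfrac{1}{k-1}\phi^2$ \emph{increases} the right-hand side, i.e., gives a weaker per-step bound, and the formula carries no $\delta$-dependence that would reflect the compensating decrease from the shift as in \cite[Lemma 5.11]{MSS}. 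The actual rank-sensitive improvement in \cite{RL,XXZ2} is organized around a different potential (a mixed-determinant/compatible-family argument), not by inserting a correction term into the MSS denominator. Second, the asserted starting estimate $\Phi^{z_i}_{Q_0}(x_0,t,\ldots,t)\le\epsilon/t$ appears to have the wrong variable in the denominator; Jacobi's formula together with $\sum_i A_i\le\bI$ gives $\Phi^{z_i}_{Q_0}(x_0,t,\ldots,t)=\tr\big((x_0\bI+t\sum_j A_j)^{-1}A_i\big)\le\tr(A_i)/x_0\le\epsilon/x_0$. None of this makes your roadmap wrong in spirit, but as written the proposal would need the technical core supplied from \cite{XXZ2} (or \cite{RL}) rather than from the sketched inequality, which is what the paper itself does by citing those sources.
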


Using the same argument as in the proof of Theorem \ref{thmp}, we deduce the following corollary of Theorem \ref{mixed2}.

\begin{corollary} Let $k\in \N$ and $0<\epsilon\le (k-1)^2/k$. If the random matrices $X_i$ have ranks at most $k$ and satisfy all the assumptions of Theorem \ref{thmp} then 
\begin{equation}
\P{}{\bigg\| \sum_{i=1}^m X_i \bigg\| \leq \bigg( \sqrt{1-\frac{\epsilon}{k-1}}+\sqrt{\epsilon}\bigg)^2
 } > 0.
\end{equation}
\end{corollary}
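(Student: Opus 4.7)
The plan is to mimic the proof of Theorem~\ref{thmp} essentially verbatim, substituting Theorem~\ref{mixed2} for Theorem~\ref{mixed} at the concluding step. Setting $A_i := \E{}{X_i}$, the hypothesis \eqref{thmp1} translates directly into $\sum_{i=1}^m A_i \le \bI$ and $\tr(A_i) \le \epsilon$. Together with the rank-$\le k$ assumption inherited from the $X_i$'s, the matrices $A_1,\ldots,A_m$ then satisfy all hypotheses of Theorem~\ref{mixed2}.

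I would then reproduce the three-step chain used in the proof of Theorem~\ref{thmp}. First, \eqref{norm10} applied to an arbitrary realization gives $\bigl\|\sum_{i=1}^m X_i\bigr\| \le \maxroot\mu[X_1,\ldots,X_m]$ with probability one. Second, Lemma~\ref{max} specialized to $\ve_1=\cdots=\ve_m=1$ (the ingredient already invoked in Theorem~\ref{thmp}, powered by the interlacing family of Proposition~\ref{ifpp}(i)) yields $\maxroot\mu[X_1,\ldots,X_m] \le \maxroot\mu[A_1,\ldots,A_m]$ with positive probability. Third, Theorem~\ref{mixed2} deterministically bounds the right-hand side by $\bigl(\sqrt{1-\epsilon/(k-1)}+\sqrt{\epsilon}\bigr)^2$. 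Concatenating the three inequalities delivers the desired estimate on the same positive-probability event produced in the second step.

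The argument needs no new technical ingredient: all the barrier-function and stability-preservation machinery is packaged inside Theorem~\ref{mixed2}, and all the interlacing machinery is packaged inside Lemma~\ref{max}. The only point requiring any attention is bookkeeping---exactly one of the three steps is probabilistic, while the other two are uniform over outcomes, so the composite bound holds on the positive-probability event supplied by Lemma~\ref{max}. Since the three-line template from Theorem~\ref{thmp} already handles this bookkeeping, the main (modest) obstacle is simply verifying clause by clause that each hypothesis of Theorem~\ref{mixed2} carries over to the expectations $A_i$, after which the proof is immediate.
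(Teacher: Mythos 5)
Your proof reproduces the paper's one-sentence justification exactly: run the three-step chain from the proof of Theorem~\ref{thmp} with Theorem~\ref{mixed2} substituted for Theorem~\ref{mixed}. The step you dismiss as "bookkeeping" is, however, the one place where the argument is not automatic. You assert that the rank-$\le k$ bound is "inherited" by $A_i=\E{}{X_i}$, but expectations of low-rank positive semidefinite matrices can have strictly larger rank: for instance, if $X_i$ takes the two rank-$k$ values $\tfrac{\epsilon}{k}\sum_{j=1}^{k}e_je_j^*$ and $\tfrac{\epsilon}{k}\sum_{j=k+1}^{2k}e_je_j^*$ each with probability $\tfrac12$, then $\rank X_i=k$ always while $\rank\E{}{X_i}=2k$, with the trace constraint $\E{}{\tr X_i}=\epsilon$ still met. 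Theorem~\ref{mixed2}, as stated, bounds $\maxroot\mu[A_1,\ldots,A_m]$ only for deterministic $A_i$ of rank at most $k$, so the final inequality in your chain (and, implicitly, in the paper's) does not follow once $\rank\E{}{X_i}>k$.

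Making this watertight requires either the additional hypothesis $\rank\E{}{X_i}\le k$, or a root bound keyed to the ranks of the realizations rather than of the expectations; the expected-rank refinements of Br\"and\'en and of Xu--Xu--Zhu cited around Theorem~\ref{mixed2} are designed to supply exactly that. You have matched the paper's approach and its level of terseness, and the paper's own justification is equally brief and does not address this point either, but the transfer of the rank hypothesis to the expectations is the genuine mathematical content of the corollary and should not be waved through as clause-by-clause verification.
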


\section{Higher rank improvement of Kyng-Luh-Song discrepancy result}\label{S3}

In this section we give the proof of Theorem \ref{kls}. This is a generalization of a matrix discrepancy result due to Kyng, Luh, and Song \cite{KLS}, which strengthens Weaver's KS$_2$ conjecture shown by Marcus, Spielman, and Srivastava \cite{MSS}, and its generalization by Akemann and Weaver \cite{AW}. We show that the rank one assumption in the matrix discrepancy result \cite[Theorem 1.4]{KLS} can be dropped.

In a nutshell, the proof of Theorem \ref{kls} involves the product $f_{\ve_1,\ldots,\ve_m}$ of two mixed characteristic polynomials as in \eqref{qint0}. By Lemma \ref{norm}, the maximum root of $f_{\ve_1,\ldots,\ve_m}$ controls the norm $\| \sum_{i=1}^m \ve_i A_i \|$ of the linear combination of positive definite matrices $A_1,\ldots,A_m$. Since $\{f_{\ve_1,\ldots,\ve_m}\}$ forms an interlacing family of polynomials, it suffices to estimate the maximum root of the expected polynomial $\E{}{f_{\xi_1,\ldots,\xi_m}}$. In the case when matrices $A_1,\ldots,A_m$ have rank one, this reduces to the expected characteristic polynomial from \cite[Proposition 5.4]{KLS} given by
\[
\E{}{\det\bigg( x^2 \mathbf I - \big( \sum_{i=1}^m (\xi_i - \E{}{\xi_i}) A_i \big)^2\bigg)}.
\]
For higher rank matrices, we introduce a quadratic analogue of mixed characteristic polynomial.

\begin{definition} \label{dmcp2}
Let $A_1, \ldots, A_m$ be $d\times d$ matrices. A quadratic mixed characteristic polynomial is defined for $x\in \C$ by
\begin{equation*}
\mu_2[A_1,\ldots,A_m](x) =  
\bigg(\prod_{i=1}^m (1 - \partial_{z_i}\partial_{w_i}) \bigg)
 \det \bigg( x \mathbf I + \sum_{i=1}^m z_i A_i\bigg)
 \det \bigg( x \mathbf I + \sum_{i=1}^m w_i A_i \bigg)
  \bigg|_{\genfrac{}{}{0pt}{2}{z_1=\ldots=z_m=0}{w_1=\ldots=w_m=0}}.
\end{equation*}
\end{definition}

The following lemma is an analogue of Lemma \ref{mux}.

\begin{lemma}\label{muxx}
Let $A_1, \ldots, A_m$ be $d\times d$ positive semidefinite matrices. Let $\xi_1,\ldots ,\xi_m$ be jointly independent random variables, which take finitely many real values.
Define a random polynomial
\begin{equation}\label{muxx0}
f_{\xi_1,\ldots,\xi_m}(x) = \mu[\xi_1 A_1,\ldots,\xi_m A_m](x)\mu[-\xi_1 A_1,\ldots,-\xi_m A_m](x) \in \R[x].
\end{equation}
 Then, $\E{}{f_{\xi_1,\ldots, \xi_m}}$ is a real-rooted and monic polynomial of degree $2d$. Moreover, if $\E{}{\xi_i}=0$ for all $i=1,\ldots,m$, then
\begin{equation}\label{muxx1}
\E{}{f_{\xi_1,\ldots, \xi_m}}
= \mu_2[\var{\xi_1}^{1/2} A_1,\ldots, \var{\xi_m}^{1/2} A_m].
\end{equation}
\end{lemma}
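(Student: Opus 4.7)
The plan splits naturally into the two assertions of the lemma.

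For the first assertion (real-rootedness, monicity of degree $2d$) there is almost nothing to do beyond quoting earlier results. By Lemma~\ref{crs} each of the two factors $\mu[\xi_1 A_1,\ldots,\xi_m A_m]$ and $\mu[-\xi_1 A_1,\ldots,-\xi_m A_m]$ in \eqref{muxx0} is monic of degree $d$ for every outcome of $(\xi_1,\ldots,\xi_m)$, so the product $f_{\xi_1,\ldots,\xi_m}$ is a monic polynomial of degree $2d$; consequently so is its expectation. Real-rootedness of $\mathbb{E}[f_{\xi_1,\ldots,\xi_m}]$ is precisely the conclusion of Theorem~\ref{qint}, which asserts that the family $\{f_{\varepsilon_1,\ldots,\varepsilon_m}\}$ is an interlacing family of polynomials in the sense of Definition~\ref{ifp}.

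For the identification formula \eqref{muxx1} in the centered case, I would start from the differential‑operator representation \eqref{qint1} of $f_{\varepsilon_1,\ldots,\varepsilon_m}$ and the computation \eqref{qint2} of the expected random operator. Taking the expectation, every factor in the product over $i$ becomes $1-\mathbb{E}[\xi_i]\partial_{z_i}+\mathbb{E}[\xi_i]\partial_{w_i}-\mathbb{E}[\xi_i^2]\partial_{z_i}\partial_{w_i}$, and when $\mathbb{E}[\xi_i]=0$ this collapses to $1-\operatorname{Var}(\xi_i)\,\partial_{z_i}\partial_{w_i}$. Therefore
\[
\mathbb{E}[f_{\xi_1,\ldots,\xi_m}](x)=\bigg(\prod_{i=1}^m\bigl(1-\operatorname{Var}(\xi_i)\,\partial_{z_i}\partial_{w_i}\bigr)\bigg)P(z)P(w)\Big|_{z=w=0},
\]
where $P(z)=\det(x\mathbf I+\sum_i z_i A_i)$.

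The remaining step is to match this with the definition of $\mu_2$ applied to the rescaled matrices $B_i=\operatorname{Var}(\xi_i)^{1/2}A_i$. Setting $s_i=\operatorname{Var}(\xi_i)^{1/2}$ and $u_i=s_iz_i$, $v_i=s_iw_i$ on the indices with $s_i>0$, the chain rule gives $\partial_{z_i}\partial_{w_i}=s_i^{2}\partial_{u_i}\partial_{v_i}=\operatorname{Var}(\xi_i)\partial_{u_i}\partial_{v_i}$, and $\det(x\mathbf I+\sum_i z_i B_i)=P(u)$, $\det(x\mathbf I+\sum_i w_i B_i)=P(v)$; evaluating at $u=v=0$ is the same as evaluating at $z=w=0$. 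Substituting into Definition~\ref{dmcp2} produces exactly the right‑hand side of the displayed formula above, giving $\mu_2[s_1A_1,\ldots,s_mA_m](x)=\mathbb{E}[f_{\xi_1,\ldots,\xi_m}](x)$. Indices with $\operatorname{Var}(\xi_i)=0$ are harmless: there the random factor is $1$ on the expectation side, and on the $\mu_2$ side $B_i=0$ makes $\partial_{z_i}\partial_{w_i}$ annihilate $P(z)P(w)$, so the operator factor $(1-\partial_{z_i}\partial_{w_i})$ acts trivially. I do not expect any real obstacle; the only thing that requires a moment's care is the rescaling step, and it is only a change of variables.
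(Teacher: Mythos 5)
Your argument is correct and follows essentially the same route as the paper: real-rootedness comes from Theorem~\ref{qint}, monicity and degree~$2d$ from Lemma~\ref{crs}, and the identity~\eqref{muxx1} is obtained by plugging $\mathbb{E}[\xi_i]=0$ into~\eqref{qint1}--\eqref{qint2} and rescaling variables. The only stylistic difference is that you treat the indices with $\operatorname{Var}(\xi_i)=0$ as a separate case, whereas the paper performs the substitution $z_i\mapsto \operatorname{Var}(\xi_i)^{1/2}z_i$, $w_i\mapsto \operatorname{Var}(\xi_i)^{1/2}w_i$ directly (the chain-rule identity $\operatorname{Var}(\xi_i)\,\partial_{z_i}\partial_{w_i}g\big|_{z=w=0}=\partial_{z_i}\partial_{w_i}\bigl[g(\operatorname{Var}(\xi)^{1/2}z,\operatorname{Var}(\xi)^{1/2}w)\bigr]\big|_{z=w=0}$ holds for any nonnegative scalars, degenerate or not), making the split unnecessary; but your handling is equally valid.
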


\begin{proof} The fact that $\E{}{f_{\xi_1,\ldots, \xi_m}}$ is a real-rooted polynomial follows from Theorem \ref{qint}. The formula \eqref{muxx1} is then a consequence of \eqref{qint1} and \eqref{qint2}
\[
\begin{aligned}
 \E{}{f_{\xi_1,\ldots, \xi_m}} = &
\bigg(\prod_{i=1}^m (1 - \E{}{\xi_i^2}\partial_{z_i}\partial_{w_i})
\bigg)
\det \bigg( x \mathbf I + \sum_{i=1}^m z_i A_i\bigg)
 \det \bigg( x \mathbf I + \sum_{i=1}^m w_i A_i \bigg)
  \bigg|_{\genfrac{}{}{0pt}{2}{z_1=\ldots=z_m=0}{w_1=\ldots=w_m=0}}
\\
= &
\bigg(\prod_{i=1}^m (1 - \partial_{z_i}\partial_{w_i}) \bigg)
\\
& \det \bigg( x \mathbf I + \sum_{i=1}^m z_i \var{\xi_i}^{1/2} A_i\bigg)
 \det \bigg( x \mathbf I + \sum_{i=1}^m w_i \var{\xi_i}^{1/2} A_i \bigg)
  \bigg|_{\genfrac{}{}{0pt}{2}{z_1=\ldots=z_m=0}{w_1=\ldots=w_m=0}}.
\end{aligned} \qedhere
  \]
\end{proof}

Our goal is to establish the bound on the largest root of a quadratic mixed polynomial in Definition \ref{dmcp2}.
Anari and Oveis Gharan \cite[Lemma 4.8]{AG} have shown a crucial result which provides the control of the barrier function of $(1-\partial^2_{z_j})p$ in terms of the barrier function of a real stable polynomial $p$. It is an analogue of a result due to Marcus, Spielman, and Srivastava \cite[Lemma 5.11]{MSS}, which controls the barrier function of $(1-\partial_{z_j})p$ in terms of the barrier function of $p$. The following is a slight variation of this result for the operator $p \mapsto (1-c^2\partial^2_{z_j})p$, which was shown by Kyng, Luh, and Song \cite[Lemma A.1]{KLS}. The assumption that $c\in [0,1]$ in \cite{KLS} can be relaxed by $c\ge 0$ as shown by a rescaling argument below.

\begin{lemma}\label{AG} Let $p\in \R[z_1,\ldots,z_m]$ be stable and let $c\ge 0$. Suppose that $z\in \R^m$ lies above the roots of $p$ and
\begin{equation}\label{AG1}
c^2\bigg(\frac 2{\delta_j} \Phi^j_p(z) + (\Phi^j_p(z))^2 \bigg) \le 1
\qquad\text{for some }j\in [m]\text{ and }\delta_j>0.
\end{equation}
Then, $z+\delta_j \mathbf e_j$ lies above the roots of $(1-c^2\partial^2_{z_j})p$ and
\begin{equation}\label{AG2}
\Phi^i_{(1-c^2\partial^2_{z_j})p}(z+\delta_j \mathbf e_j) \le \Phi^i_p(z)
\qquad\text{for all }i\in [m].
\end{equation}
\end{lemma}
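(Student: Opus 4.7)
The plan is to derive the result from the KLS version (which handles $c\in[0,1]$) by a simple rescaling of the $j$-th variable, so that the general $c\ge 0$ case is reduced to $c=1$. For $c=0$ the operator $1-c^2\partial_{z_j}^2$ is the identity and both conclusions are immediate (with the only hypothesis being that $z$ lies above the roots of $p$, which is given). So assume $c>0$.

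First I would introduce the rescaled polynomial
\[
\tilde p(z_1,\ldots,z_m) = p(z_1,\ldots,z_{j-1},\, c z_j,\, z_{j+1},\ldots,z_m),
\]
which is stable by the scaling preservation property listed after Theorem \ref{bb09}. A direct computation gives $\partial_{z_j}^2 \tilde p(z) = c^2 (\partial_{z_j}^2 p)(z_1,\ldots,c z_j,\ldots,z_m)$, hence
\[
\bigl(1-\partial_{z_j}^2\bigr)\tilde p(z) = \bigl[\bigl(1-c^2\partial_{z_j}^2\bigr) p\bigr](z_1,\ldots,c z_j,\ldots,z_m).
\]
Let $\tilde z = (z_1,\ldots,z_{j-1}, z_j/c, z_{j+1},\ldots,z_m)$ and $\tilde\delta_j = \delta_j/c$, so that $\tilde p(\tilde z) = p(z)$ and $\tilde z + \tilde\delta_j \mathbf e_j$ corresponds to $z + \delta_j \mathbf e_j$ under the substitution $z_j \mapsto c z_j$. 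A short calculation of partial derivatives yields the barrier relations
\[
\Phi^j_{\tilde p}(\tilde z) = c\,\Phi^j_p(z), \qquad \Phi^i_{\tilde p}(\tilde z) = \Phi^i_p(z) \ \ (i\ne j).
\]

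Next I would verify the hypotheses of the KLS lemma with parameter $c=1$ at the point $\tilde z$ with step $\tilde\delta_j$. That $\tilde p$ is above its roots at $\tilde z$ follows from the same for $p$ at $z$, and the barrier relation above together with hypothesis \eqref{AG1} gives
\[
\frac{2}{\tilde\delta_j}\Phi^j_{\tilde p}(\tilde z) + \bigl(\Phi^j_{\tilde p}(\tilde z)\bigr)^2 = c^2\bigg(\frac{2}{\delta_j}\Phi^j_p(z) + \bigl(\Phi^j_p(z)\bigr)^2\bigg) \le 1,
\]
which is exactly the KLS hypothesis. Applying the KLS lemma (with $c=1$) gives that $\tilde z + \tilde\delta_j \mathbf e_j$ lies above the roots of $(1-\partial_{z_j}^2)\tilde p$ and that $\Phi^i_{(1-\partial_{z_j}^2)\tilde p}(\tilde z + \tilde\delta_j \mathbf e_j) \le \Phi^i_{\tilde p}(\tilde z)$ for all $i\in[m]$.

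Finally I would translate back to $p$. The above-the-roots property transfers under the rescaling because $(1-\partial_{z_j}^2)\tilde p$ and $(1-c^2\partial_{z_j}^2)p$ agree after the substitution $z_j\mapsto c z_j$; therefore $(1-c^2\partial_{z_j}^2)p$ is above its roots at $z+\delta_j\mathbf e_j$. The same substitution relation yields $\Phi^j_{(1-c^2\partial_{z_j}^2)p}(z+\delta_j\mathbf e_j) = (1/c)\,\Phi^j_{(1-\partial_{z_j}^2)\tilde p}(\tilde z + \tilde\delta_j \mathbf e_j)$, and for $i\ne j$ the barrier values are unchanged by the rescaling. Combining these with the KLS inequality and the formulas $\Phi^j_{\tilde p}(\tilde z) = c\Phi^j_p(z)$, $\Phi^i_{\tilde p}(\tilde z) = \Phi^i_p(z)$ produces \eqref{AG2}. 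The only real work is keeping the bookkeeping of the factors of $c$ straight; in particular they cancel precisely in the $i=j$ barrier inequality, which is what makes the scaling argument go through.
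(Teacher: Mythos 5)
Your proof is correct and uses essentially the same idea as the paper: reduce $c\ge 0$ to $c=1$ by a linear change of variable and invoke the Anari--Oveis Gharan barrier update lemma, with the factors of $c$ cancelling in the barrier inequalities. The only cosmetic difference is that you rescale just the $j$-th coordinate ($z_j\mapsto cz_j$), whereas the paper rescales all coordinates simultaneously via $p_c(z)=p(cz_1,\ldots,cz_m)$; both are covered by the scaling stability property and give the same bookkeeping.
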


\begin{proof} The case $c=0$ is trivial, so we can assume that $c>0$. The case $c=1$ was shown in \cite[Lemma 4.8]{AG}. The general case follows by a rescaling argument. Define polynomial $p_c(z_1,\ldots,z_m)=p(cz_1,\ldots,c z_m)$. Observe that $w\in \R^m$ is above roots of $p$ if and only if $c^{-1}w$ is above roots of $p_c$. A simple calculation shows that 
\[
(1-\partial^2_{z_j})p_c(w)=(1-c^2\partial^2_{z_j})p(cw) \qquad w\in \R^m.
\]
Moreover, if $w$ is above the roots of $p_c$ or $z(1-c^2\partial^2_{z_j})p_c$, respectively, then
\[
\Phi^j_{p_c}(w)= c \Phi^j_p(cw) \quad\text{and}\quad
\Phi^j_{(1-\partial^2_{z_j})p_c}(w)= c \Phi^j_{(1-c^2\partial^2_{z_j})p}(cw). 
\]
Thus, $c^{-1}z$ is above the roots of $p_c$ and by \eqref{AG1} we have
\[
\frac {2c}{\delta_j} \Phi^j_{p_c}(c^{-1}z) + (\Phi^j_{p_c}(c^{-1}z))^2 \le 1.
\]
By \cite[Lemma 4.8]{AG}, $z+c^{-1}\delta_j\mathbf e_j$ lies above the roots of $(1-\partial^2_{z_j})p_c$ and
\[
\Phi^i_{(1-\partial^2_{z_j})p_c}(c^{-1}z+c^{-1}\delta_j \mathbf e_j) \le \Phi^i_{p_c}(c^{-1}z)
\qquad\text{for all }i\in [m].
\]
This yields \eqref{AG2}.
\end{proof}

Iterating Lemma \ref{AG} with $c=1/\sqrt{2}$ yields the following corollary.

\begin{corollary} \label{AG10}
Let $p\in \R[z_1,\ldots,z_m]$ be stable. Let $S \subset [m]$. Suppose that $z\in \R^m$ lies above the roots of $p$ and for some $\delta_j>0$, $j\in S$, we have
\[
\frac 1{\delta_j} \Phi^j_p(z) + \frac 12 (\Phi^j_p(z))^2 \le 1
\qquad\text{for all }j \in S.
\]
Then, the point $z+\sum_{j\in S} \delta_j \mathbf e_j$ lies above the roots of $\prod_{j\in S} (1-\tfrac 12\partial^2_{z_j})p$.
\end{corollary}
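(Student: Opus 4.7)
The plan is to induct on $|S|$, peeling off one coordinate at a time via Lemma \ref{AG} applied with $c = 1/\sqrt{2}$, and using the ``barrier-decrease'' conclusion of that lemma to verify that the hypothesis of the corollary persists for the remaining coordinates.

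The base case $|S| = 0$ is trivial. For the inductive step I would fix any $j_0 \in S$. With $c^2 = 1/2$ the assumption of Lemma \ref{AG} at coordinate $j_0$ reads exactly $\frac{1}{\delta_{j_0}}\Phi^{j_0}_p(z) + \frac{1}{2}(\Phi^{j_0}_p(z))^2 \le 1$, which is precisely the hypothesis of the corollary at $j_0$. Thus Lemma \ref{AG} yields that $z + \delta_{j_0}\mathbf{e}_{j_0}$ lies above the roots of $p_1 := (1 - \tfrac{1}{2}\partial^2_{z_{j_0}})p$, and moreover
\[
\Phi^i_{p_1}(z + \delta_{j_0}\mathbf{e}_{j_0}) \le \Phi^i_p(z) \qquad \text{for all } i \in [m].
\]

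Before invoking the inductive hypothesis on $p_1$ over $S' := S \setminus \{j_0\}$, two things must be verified. Firstly, $p_1$ must be stable: this follows from Theorem \ref{bb10} since the symbol $1 - \tfrac 12 w_{j_0}^2 = (1 - w_{j_0}/\sqrt{2})(1 + w_{j_0}/\sqrt{2})$ is stable, or equivalently by factoring $1 - \tfrac 12 \partial^2_{z_{j_0}} = (1 - \tfrac 1{\sqrt 2}\partial_{z_{j_0}})(1 + \tfrac 1{\sqrt 2}\partial_{z_{j_0}})$ and applying Lemma \ref{spd}(i) twice. Secondly, the hypothesis of the corollary at each remaining $j \in S'$ must still hold for $p_1$ at the new point $z + \delta_{j_0}\mathbf{e}_{j_0}$. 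By Lemma \ref{bar} barrier functions are non-negative, and the quadratic $x \mapsto x/\delta_j + x^2/2$ is non-decreasing on $[0,\infty)$; combined with the displayed inequality above this gives
\[
\frac{1}{\delta_j}\Phi^j_{p_1}(z + \delta_{j_0}\mathbf{e}_{j_0}) + \frac{1}{2}\bigl(\Phi^j_{p_1}(z + \delta_{j_0}\mathbf{e}_{j_0})\bigr)^2 \le \frac{1}{\delta_j}\Phi^j_p(z) + \frac{1}{2}\bigl(\Phi^j_p(z)\bigr)^2 \le 1.
\]

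Applying the inductive hypothesis to $p_1$ with index set $S'$ (and the same $\delta_j$ for $j \in S'$) then produces a point $(z + \delta_{j_0}\mathbf{e}_{j_0}) + \sum_{j \in S'}\delta_j\mathbf{e}_j = z + \sum_{j\in S}\delta_j \mathbf{e}_j$ lying above the roots of $\prod_{j\in S'}(1-\tfrac 12 \partial^2_{z_j}) p_1 = \prod_{j\in S}(1-\tfrac 12 \partial^2_{z_j})p$, the last equality by commutativity of partial derivatives. The only genuinely non-mechanical step is the monotonicity argument in the previous paragraph: it is what allows the one-shot guarantee of Lemma \ref{AG} to be compounded across multiple coordinates without losing the quadratic barrier-budget inequality. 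Everything else is bookkeeping.
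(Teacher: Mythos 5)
Your proof is correct and follows essentially the same route as the paper: iterate Lemma \ref{AG} with $c=1/\sqrt{2}$ one coordinate at a time, using the barrier-decrease conclusion \eqref{AG2} to keep the hypothesis valid at the next stage. The paper states this tersely (``using Lemma \ref{AG} and the monotonicity of the barrier function''), whereas you spell out the key point explicitly -- that non-negativity of barrier functions together with the monotonicity of $x \mapsto x/\delta_j + x^2/2$ on $[0,\infty)$ propagates the quadratic budget inequality -- which is a welcome clarification.
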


\begin{proof}
By permuting variables $z_1,\ldots, z_m$ if necessary, we can assume that $S=\{1,\ldots,n\}$ for some $n\in [m]$. For $k=0,\ldots,n$, define a polynomial $p_k$ and point $y_k$ by
\[
p_k=\prod_{j=1}^k (1-\tfrac 12\partial^2_{z_j})p,
\qquad
y_k= z+ \sum_{j=1}^k \delta_j \mathbf e_j \in \R^m.
\]
By convention, $p_0=p$ and $y_0=z$. Then, using Lemma \ref{AG} and the monotonicity of the barrier function we show inductively that $y_k$ lies above the roots of $p_k$ for all $k \in [m]$.
\end{proof}

Using Corollary \ref{AG10} we obtain the following result, which extends the barrier argument in the proof of \cite[Proposition 5.4]{KLS} to matrices of higher rank.

\begin{lemma}\label{qx}
 Let $B_1, \ldots, B_m$ be $d \times d$ positive semidefinite matrices such that
\begin{equation}\label{qx0}
\max_{i} \tr(B_i) \le 1 
\quad\text{and}\quad
\sum_{i=1}^m  \tr(B_i) B_i \le \mathbf I.
\end{equation}
Let
\begin{equation}\label{qx1}
Q(x,z_1,\ldots,z_m,w_1,\ldots,w_m)= \det \bigg( x \mathbf I + \sum_{i=1}^m z_i B_i \bigg)\det \bigg( x \mathbf I + \sum_{i=1}^m w_i B_i \bigg).
\end{equation}
Then, for any subset $S \subset [m]$, the point $(4,0,\ldots,0)$ lies above the roots of the polynomial
\begin{equation}\label{qx2}
\prod_{i\in S} (1 - \tfrac 12\partial^2_{z_i\equiv w_i})Q.
\end{equation}
Here, $\partial^2_{z_i\equiv w_i}$ denotes the composition of diagonalization operator that identifies variables $z_i$ and $w_i$, which is followed by the second order partial derivative $\partial^2_{z_i}$ in variable $z_i$.
\end{lemma}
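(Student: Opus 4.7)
The plan is to apply Corollary \ref{AG10} after first diagonalizing $w_i \equiv z_i$ for each $i \in S$. Let
\[
\tilde P(x, z_1, \ldots, z_m, \{w_j\}_{j \notin S}) := Q\big(x, z_1, \ldots, z_m, w_1, \ldots, w_m\big)\big|_{w_i = z_i,\, i \in S},
\]
which is real stable because diagonalization preserves stability, and the polynomial in \eqref{qx2} coincides with $\prod_{i \in S}(1-\tfrac{1}{2}\partial^2_{z_i})\tilde P$. Choose $\delta_i = \tr(B_i)$ for $i \in S$ (discarding indices with $B_i = 0$, which contribute trivially), and let $z^*$ denote the point whose $x$-coordinate is $4$, whose $z_i$-coordinate is $-\delta_i$ for each $i \in S$, and whose remaining coordinates vanish. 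Since $z^* + \sum_{i \in S}\delta_i \mathbf e_{z_i} = (4, 0, \ldots, 0)$, it suffices by Corollary \ref{AG10} to verify that $z^*$ lies above the roots of $\tilde P$ and that the barrier hypothesis of that corollary holds at $z^*$ for every $i \in S$.

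Set $M := 4\mathbf I - \sum_{i \in S}\tr(B_i) B_i$. The assumption $\sum_{i = 1}^m \tr(B_i) B_i \le \mathbf I$ in \eqref{qx0} yields $M \ge 3\mathbf I$. For any $t \ge 0$ componentwise, each of the two determinantal factors of $\tilde P(z^* + t)$ is the determinant of a matrix of the form $M$ plus a nonnegative shift in $x$ plus a PSD combination of the $B_k$, which is positive definite and hence has positive determinant; this gives $\tilde P(z^* + t) > 0$, so $z^*$ lies above the roots of $\tilde P$.

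For the barrier condition, observe that at $z^*$ both determinantal factors of $\tilde P$ reduce to $\det M$ (the variables $w_j$ and $z_k$ for $k \notin S$ vanish, and the remaining ones coincide in both factors). A direct differentiation then gives, for each $i \in S$,
\[
\Phi^{z_i}_{\tilde P}(z^*) = 2\, \tr(M^{-1} B_i) \le \tfrac{2}{3}\tr(B_i) = \tfrac{2}{3}\delta_i,
\]
where the inequality uses $M \ge 3 \mathbf I$ together with the standard fact that $A \le B$ for positive semidefinite $A, B$ implies $\tr(A C) \le \tr(B C)$ for any positive semidefinite $C$. Since $\delta_i = \tr(B_i) \le 1$ by \eqref{qx0},
\[
\frac{1}{\delta_i}\Phi^{z_i}_{\tilde P}(z^*) + \frac{1}{2}\big(\Phi^{z_i}_{\tilde P}(z^*)\big)^2 \le \frac{2}{3} + \frac{2\delta_i^2}{9} \le \frac{8}{9} < 1,
\]
so Corollary \ref{AG10} applies and the claim follows. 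The delicate point in this scheme is computing the barrier at the off-origin point $z^*$, but the choice $\delta_i = \tr(B_i)$ is engineered so that the trace hypothesis $\sum \tr(B_i) B_i \le \mathbf I$ immediately furnishes the bound $M \ge 3\mathbf I$, which simultaneously controls both the above-the-roots property and the barrier estimate.
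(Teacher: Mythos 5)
Your proof is correct and follows essentially the same strategy as the paper's: place a starting point below $(4,0,\ldots,0)$ with shifted $z_i$-coordinates, verify it is above the roots, control the barrier via the lower bound on the relevant matrix, and invoke Corollary \ref{AG10}. The differences are minor but worth noting. You choose $\delta_i=\tr(B_i)$ and shift only the $z_i$-coordinates with $i\in S$, which makes the target of Corollary \ref{AG10} land exactly at $(4,0,\ldots,0)$; the paper instead takes $\delta_i=2\tr(B_i)$ and shifts every coordinate, arriving at a point with some negative coordinates and then concluding by the coordinatewise monotonicity of the ``above the roots'' property. Your barrier verification gives $\tfrac{2}{3}+\tfrac{2}{9}=\tfrac{8}{9}<1$ (slack), whereas the paper's choice makes the inequality tight at $\alpha=4$, $t=2$. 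One small expository point: the chain $\tr(M^{-1}B_i)\le\tfrac{1}{3}\tr(B_i)$ uses operator monotonicity of the inverse map to pass from $M\ge 3\mathbf I$ to $M^{-1}\le\tfrac{1}{3}\mathbf I$ before applying trace monotonicity; you should name that step, as the paper does. You are also right to discard indices with $B_i=0$ so that $\delta_i>0$ as required by Corollary \ref{AG10}; this detail is left implicit in the paper.
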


\begin{proof}
By Lemma \ref{spd} the operator $(1-\tfrac 12\partial^2_{z_i})=(1-\tfrac 1{\sqrt{2}}\partial_{z_i})(1+\tfrac 1{\sqrt{2}}\partial_{z_i})$ preserves stability. Since diagonalization operator, which identifies variables $z_i$ and $w_i$, also preserves stability, so does the operator $(1 - \tfrac 12\partial^2_{z_i\equiv w_i})$. Moreover, the operators $(1 - \tfrac 12\partial^2_{z_i\equiv w_i})$, $i=1,\ldots,m$, commute. Since $Q$ is a real stable polynomial, so is the polynomial \eqref{qx2}.

Let $0<t<\alpha$ be constants to be determined later. As we will see $\alpha=4$ and $t=2$ turn out to be optimal choices. Define $\delta_i= t \tr(B_i)$, $i=1,\ldots,m$. We claim that the point with coordinates $x=\alpha$, $z_i=w_i=-\delta_i$ for $i\in [m]$, is above the roots of the polynomial \eqref{qx1}. Indeed, take $x\ge \alpha$ and $z_i, w_i \ge -\delta_i$ for $i\in [m]$. Then, by \eqref{qx0} we have
\begin{equation}\label{qx4}
\alpha \mathbf I + \sum_{i=1}^m z_i B_i \ge 
\alpha \mathbf I - \sum_{i=1}^m \delta_i B_i \ge
\alpha \mathbf I - t \sum_{i=1}^m \tr(B_i) B_i \ge (\alpha-t) \mathbf I.
\end{equation}
An analogous calculation in variables $w_i$, $i\in [m]$, shows that 
\[
Q(x,z_1,\ldots,z_m,w_1,\ldots,w_m) \ge \det((\alpha-t)\mathbf I)^2 = (\alpha-t)^{2d} >0.
\]

Let $p$ be a polynomial obtained from the polynomial $Q$ by identifying variables $z_i$ and $w_i$ for $j\in S$. By rearranging variables if necessary, we can assume that $S=[n]$ for some $n\in [m]$. Then,
\[
p(x,z_1,\ldots,z_m, w_{n+1}, \ldots,w_m) = 
\det \bigg( x \mathbf I + \sum_{i=1}^m z_i B_i \bigg)\det \bigg( x \mathbf I + \sum_{i=1}^m z_i B_i + \sum_{i=n+1}^m w_i B_i \bigg).
\]
Let $j\in S$. By the product rule and Jacobi's formula we have
\[
\begin{aligned}
\Phi^j_p= 
\frac{\partial_{z_j}p}{p} = &
\frac{1}{p}
\partial_{z_i} \det \bigg( x \mathbf I + \sum_{i=1}^m z_i B_i \bigg)\det \bigg( x \mathbf I + \sum_{i=1}^m z_i B_i + \sum_{i=n+1}^m w_i B_i \bigg)
\\
&+ \frac{1}{p}
 \det \bigg( x \mathbf I + \sum_{i=1}^m z_i B_i \bigg) \partial_{z_i}\det \bigg( x \mathbf I + \sum_{i=1}^m z_i B_i + \sum_{i=n+1}^m w_i B_i \bigg)
 \\
= & \tr\bigg(\bigg( x \mathbf I + \sum_{i=1}^m z_i B_i \bigg)^{-1} B_j\bigg) + 
\tr\bigg(\bigg( x \mathbf I + \sum_{i=1}^m z_i B_i + \sum_{i=n+1}^m w_i B_i \bigg)^{-1} B_j\bigg).
\end{aligned}
\]
By the operator monotonicity of the inverse map $X \mapsto X^{-1}$ on positive definite matrices and \eqref{qx4}, we have
\[
\tr\bigg(\bigg( x \mathbf I + \sum_{i=1}^m z_i B_i \bigg)^{-1} B_j\bigg) \le \frac{ \tr( B_j)}{\alpha -t}  = \frac{\delta_j}{(\alpha-t) t}.
\]
A similar estimate holds for the second term.
Thus, evaluating the barrier function at the point $\mathbf z$ with coordinates $x=\alpha$, $z_i=-\delta_i$, $i\in [m]$, and $w_i=-\delta_i$, $i \in [m] \setminus [n]$, yields
\[
\Phi^j_p(\mathbf z) \le \frac{ 2\tr( B_j)}{\alpha -t} = \frac{2\delta_j}{(\alpha-t) t}.
\]
Recall that the point $\mathbf z$ lies above the roots of $p$.
In order to apply Corollary \ref{AG10}, for any $j\in S$, we must require that 
\[
\frac 1{\delta_j} \Phi^j_p(\mathbf z) + \frac 12 (\Phi^j_p(\mathbf z))^2 \le 1.
\]
Since $ \tr( B_j) \le 1$, the above follows from
\[
\frac{2}{(\alpha-t) t}+ \frac{2}{(\alpha-t)^2} \le 1.
\]
An elementary calculation shows that the smallest value of $\alpha$ for which this holds is $\alpha=4$ and the corresponding $t=2$. By Corollary \ref{AG10}, the point $\mathbf z+ \sum_{j\in S} \delta_j \mathbf e_{z_j}$, with coordinates $x=4$, $z_i=0$ for $i\in [n]$, $z_i=w_i=-\delta_i$ for $i\in [m] \setminus [n]$, is above the roots of the polynomial $\prod_{j\in S} (1-\tfrac 12\partial^2_{z_j})p$, which coincides with the polynomial \eqref{qx2}. Consequently, $(4,0,\ldots,0)$ is also above the roots of the polynomial \eqref{qx2}. 
\end{proof}

To prove next lemma we will use the Helton-Vinnikov theorem \cite{HV} in a convenient formulation given by Borcea and Br\"and\'en \cite[Corollary 6.7]{BB}. The proof of the second part of Theorem \ref{HV} is elementary and can be found in \cite[Corollary 3.4]{BCMS} or \cite[Remark 5.6]{MSS}.

\begin{theorem}\label{HV}
Let $p \in \R[x,y]$ be a real polynomial of degree $d \in \N$. Then $p$ is stable if and only if there exists two  positive semidefinite $d \times d$ matrices $A,B$ and a hermitian $d \times d$ matrix $C$ such that 
\begin{equation}\label{HV1}
p(x,y)=\pm \det( xA +y B +C).
\end{equation}
In addition, if the point $(0,0)$ us above the roots of $p$, then the sign in \eqref{HV1} is positive and both $A+B$ and $C$ are positive definite.
\end{theorem}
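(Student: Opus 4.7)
The equivalence in \eqref{HV1} has two directions of very different depth, and I plan to handle them separately. The reverse direction (stability $\Rightarrow$ representation) is the Helton--Vinnikov theorem \cite{HV} resolving the Peter Lax conjecture, and I would invoke it rather than reprove it, in the convenient form of \cite[Corollary 6.7]{BB}. The main obstacle in any honest proof is exactly this step: it requires genuine algebraic-geometric input about determinantal representations of plane curves and cannot be obtained by a short manipulation of $p$. The remaining content is the easy direction and the positivity refinement, which I would write out along the lines of \cite[Corollary 3.4]{BCMS} or \cite[Remark 5.6]{MSS}.

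For the easy direction, suppose $p(x,y)=\pm\det(xA+yB+C)$ with $A,B$ positive semidefinite and $C$ hermitian, and assume toward a contradiction that $p(x_0,y_0)=0$ for some $(x_0,y_0)\in\C_+^2$. Pick a nonzero $v$ with $(x_0A+y_0B+C)v=0$; pairing with $v$ gives
\[
x_0\lan Av,v\ran+y_0\lan Bv,v\ran+\lan Cv,v\ran=0.
\]
Because $\lan Av,v\ran,\lan Bv,v\ran\ge 0$ and $\lan Cv,v\ran\in\R$, taking imaginary parts and using $\Im(x_0),\Im(y_0)>0$ forces $\lan Av,v\ran=\lan Bv,v\ran=0$, hence $Av=Bv=0$, and therefore $Cv=0$. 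This would make $\det(xA+yB+C)$ identically zero, contradicting the hypothesis $\deg p=d$.

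For the positivity refinement, assume the representation holds and that $(0,0)$ is above the roots of $p$, i.e., $p(s,t)>0$ for all $s,t\ge 0$. Since $p$ has total degree exactly $d$, its top-degree homogeneous part equals $\det(xA+yB)$, which must be nonzero; in particular $\det(A+B)\ne 0$, and the positive semidefiniteness of $A+B$ upgrades to positive definiteness. Next, restrict to the diagonal: the polynomial $q(\lambda)=p(\lambda,\lambda)=\pm\det(\lambda(A+B)+C)$ is real stable (diagonalization preserves stability), hence real-rooted, and it satisfies $q(\lambda)>0$ for every $\lambda\ge 0$. Setting $M=A+B>0$ and $N=M^{-1/2}CM^{-1/2}$, which is hermitian, I would rewrite $q(\lambda)=\pm\det(M)\det(\lambda I+N)$; its roots are $-\nu_i$, where $\nu_i$ are the eigenvalues of $N$, so positivity of $q$ on $[0,\infty)$ forces every $\nu_i>0$. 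Hence $N$, and then $C=M^{1/2}NM^{1/2}$, is positive definite, and the identity $q(0)=\pm\det(M)\det(N)>0$ combined with $\det(M)\det(N)>0$ pins down the sign as $+$.
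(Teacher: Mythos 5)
Your overall strategy matches the paper's exactly: the paper states Theorem~\ref{HV} as a quoted result, invoking the Helton--Vinnikov theorem via \cite[Corollary 6.7]{BB} for the genuine (stability $\Rightarrow$ determinantal representation) direction, and deferring the elementary converse and positivity refinement to \cite[Corollary 3.4]{BCMS} and \cite[Remark 5.6]{MSS}. You correctly identify which direction is deep and which is elementary, and you actually spell out the elementary parts, which the paper does not. Your arguments for the easy direction (pairing with a kernel vector, taking imaginary parts, using positive semidefiniteness of $A,B$ to force $Av=Bv=0$ and then $Cv=0$ from the original kernel relation) and for the positivity refinement (diagonal restriction, congruence by $M^{\pm 1/2}$ with $M=A+B$, sign determination from $q(0)=p(0,0)>0$) are both correct and are essentially the standard arguments in the cited sources.

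One step in the positivity refinement is stated a bit too quickly: from ``the top-degree homogeneous part $\det(xA+yB)$ must be nonzero'' you conclude ``in particular $\det(A+B)\ne 0$.'' For a general nonzero homogeneous bivariate polynomial this inference would fail, since it could vanish on the line $x=y$. It is true here because $A,B$ are positive semidefinite: if $(A+B)v=0$ for some $v\ne 0$ then $\langle Av,v\rangle+\langle Bv,v\rangle=0$, hence $Av=Bv=0$, hence $\det(xA+yB)\equiv 0$ --- exactly the same positive-semidefinite kernel argument you already used in the easy direction. Worth adding one sentence to close this, but it is not a substantive gap. Everything else is sound, and the route is the same one the paper implicitly endorses.
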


\begin{lemma}\label{qz} Suppose $p \in \R[x_1,\ldots,x_n,z,w]$ be a real stable polynomial. Let $(t_1,\ldots,t_n)\in \R^n$. If $(t_1,\ldots,t_n,0,0)$ is above the roots of $p$ and $(t_1,\ldots,t_n)$ is above the roots of the polynomial
\begin{equation}\label{qz1}
(1 - \tfrac 12 \partial^2_{z \equiv w})p \bigg|_{z=0} \in \R[x_1,\ldots,x_n],
\end{equation}
then $(t_1,\ldots,t_n)$ is also above the roots of the polynomial
\begin{equation}\label{qz2}
(1 - \partial_{z} \partial_{w}) p \bigg|_{z=w=0} \in \R[x_1,\ldots,x_n].
\end{equation}
\end{lemma}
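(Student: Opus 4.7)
The plan is to reduce the statement to a pointwise positivity assertion by fixing an arbitrary nonnegative shift in the $x$-variables. Specifically, fix $s_1,\ldots,s_n \ge 0$ and set $\tau = (t_1+s_1,\ldots,t_n+s_n)$. Consider the bivariate restriction $P(z,w) := p(\tau,z,w) \in \R[z,w]$. Since restriction preserves real stability, $P$ is real stable; moreover, the hypothesis that $(t_1,\ldots,t_n,0,0)$ lies above the roots of $p$ forces $(0,0)$ to lie above the roots of $P$. A direct computation using $\partial_z^2[P(z,z)]|_{z=0} = \partial_z^2 P(0,0) + 2\partial_z\partial_w P(0,0) + \partial_w^2 P(0,0)$ yields the algebraic identity
\[
[(1-\partial_z\partial_w)p](\tau,0,0) = [(1-\tfrac 12 \partial^2_{z\equiv w})p|_{z=0}](\tau) + \tfrac 12 \partial_z^2 P(0,0) + \tfrac 12 \partial_w^2 P(0,0).
\]
Because the first term on the right is strictly positive by the second hypothesis on $(t_1,\ldots,t_n)$, it suffices to show that $\partial_z^2 P(0,0) \ge 0$ and $\partial_w^2 P(0,0) \ge 0$.

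To verify $\partial_z^2 P(0,0) \ge 0$, consider the univariate restriction $R(z) := P(z,0)$. Since restriction preserves stability, $R$ is real-rooted; since $(0,0)$ is above the roots of $P$, we have $R(u) = P(u,0) > 0$ for every $u \ge 0$, so all roots of $R$ are strictly negative and its leading coefficient is positive. Factoring $R(z) = c \prod_i (z+\nu_i)$ with $c > 0$ and $\nu_i > 0$ and expanding in elementary symmetric polynomials shows that every coefficient of $R$ is nonnegative; in particular $R''(0) = \partial_z^2 P(0,0) \ge 0$. The symmetric argument applied to $w \mapsto P(0,w)$ yields $\partial_w^2 P(0,0) \ge 0$. (Equivalently, one can invoke the Helton-Vinnikov representation $P(z,w) = \det(zA+wB+C)$ of Theorem \ref{HV} and apply Jacobi's formula twice to obtain $\partial_z^2 P(0,0) = \det(C)[(\tr(C^{-1}A))^2 - \tr((C^{-1}A)^2)] \ge 0$, the inequality following because $C^{-1}A$ has the same spectrum as the positive semidefinite matrix $C^{-1/2}AC^{-1/2}$, whose eigenvalues $\lambda_1,\ldots,\lambda_d$ satisfy $(\sum \lambda_i)^2 \ge \sum \lambda_i^2$.)

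Since $(s_1,\ldots,s_n)$ was arbitrary, this shows that $(t_1,\ldots,t_n)$ lies above the roots of the polynomial $(1-\partial_z\partial_w)p|_{z=w=0} \in \R[x_1,\ldots,x_n]$, completing the argument. The main obstacle is spotting the exact algebraic identity relating the two operators via diagonalization of $z$ and $w$; once it is in hand, the required positivity of the second partial derivatives of a bivariate real stable polynomial at a point above its roots follows at once from univariate real-rootedness and nonnegativity of elementary symmetric functions of positive numbers.
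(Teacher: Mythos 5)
Your proof is correct, and it takes a genuinely lighter route than the paper. Both arguments begin the same way: fix a nonnegative shift $s$ in the $x$-variables and restrict to the bivariate real stable polynomial $P(z,w)=p(\tau,z,w)$ with $(0,0)$ above its roots. But where the paper then invokes the Helton--Vinnikov representation $P(z,w)=\det(zA+wB+C)$ and computes both $(1-\partial_z\partial_w)P|_{z=w=0}$ and $(1-\tfrac12\partial^2_{z\equiv w})P|_{z=0}$ explicitly in traces of $\tilde A=C^{-1/2}AC^{-1/2}$ and $\tilde B=C^{-1/2}BC^{-1/2}$, finally comparing them via $\tr(M^2)\le(\tr M)^2$ for $M\succeq 0$, you isolate the exact chain-rule identity
$(1-\partial_z\partial_w)P(0,0)=(1-\tfrac12\partial^2_{z\equiv w})P|_{z=0}+\tfrac12\partial_z^2P(0,0)+\tfrac12\partial_w^2P(0,0)$
and then observe that $\partial_z^2P(0,0)\ge 0$ needs only the univariate fact that $R(z)=P(z,0)$ is real-rooted with all roots negative and positive leading coefficient, hence has nonnegative coefficients. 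This removes the dependence on Helton--Vinnikov and on the operator monotonicity of traces, and in fact the paper's trace inequality $\tr(\tilde A^2)\le(\tr\tilde A)^2$ is precisely your $\partial_z^2P(0,0)\ge 0$ dressed in determinantal form, as your parenthetical remark makes explicit. The identity-based version is preferable in that it exposes the comparison as a literal equality plus two manifestly nonnegative correction terms, rather than as a coincidence emerging from two separate determinant expansions.
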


\begin{proof} Fix a point $(x_1,\ldots,x_n)\in \R^n$ such that $x_i \ge t_i$ for all $i\in [n]$. Consider a real stable polynomial $q(z,w)= p(x_1,\ldots,x_n,z,w)$. If $q$ is a constant polynomial, the the conclusion is trivial. Otherwise, by Theorem \ref{HV} we can represent $q(z,w)= \det (z A+w B+ C)$ for some positive semidefinite matrices $A$, $B$, and hermitian matrix $C$. By \cite[Remark 5.6]{MSS}, the matrix $A+B$ is positive definite. Define matrices $\tilde A= C^{-1/2}AC^{-1/2}$ and $\tilde B= C^{-1/2} B C^{-1/2}$. Then,
\[
\begin{aligned}
q(z,w)= \det (z A+w B+ C) &=\det (z C^{1/2}\tilde A C^{1/2} +w C^{1/2}\tilde B C^{1/2} + C)
\\
&= \det(C) \det(z \tilde A + w \tilde B + \mathbf I).
\end{aligned}
\]
By Jacobi's formula
\[
\partial_z q (z,w) = \det(C) \det(M) \tr(M^{-1}\tilde A)
\qquad\text{where }M=z\tilde A+ w \tilde B + \mathbf I.
\]
By the formula for the derivative of the inverse matrix \cite[Lemma 2.13]{AG} we have
\[
\partial_w M^{-1} = - M^{-1} B M^{-1}.
\]
Hence,
\[
\partial_w \partial_z q (z,w) = \det(C) ( \det(M) \tr(M^{-1}\tilde B) \tr(M^{-1}\tilde A) - \det(M) \tr(M^{-1}\tilde BM^{-1} \tilde A)).
\]
Thus,
\[
(1-\partial_w \partial_z) q \bigg|_{z=w=0}= \det(C) (1- \tr(\tilde A)\tr(\tilde B) + \tr(\tilde B \tilde A)).
\]
Likewise, letting $\tilde M=z(\tilde A+ \tilde B) + \mathbf I$ we have
\[
\begin{aligned}
\partial_z (q(z,z)) & = \det(C) \det({\tilde M}) \tr({\tilde M}^{-1}(\tilde A+\tilde B)),
\\
\partial^2_z (q(z,z)) & = \det(C) ( \det(\tilde M) (\tr({\tilde M}^{-1}(\tilde A+\tilde B)))^2  - \det(M) \tr({\tilde M}^{-1}(\tilde A+ \tilde B){\tilde M}^{-1} (\tilde A+\tilde B))
\end{aligned}
\]
Thus,
\[
\begin{aligned}
(1 &- \tfrac12 \partial^2_z)(q(z,z))\bigg|_{z=0} =\det(C)(1-\tfrac 12 (\tr(\tilde A + \tilde B))^2+ \tfrac 12 \tr((\tilde A + \tilde B)^2))
\\
&=\det(C)(1- \tr(\tilde A)\tr(\tilde B) + \tr(\tilde A \tilde B) + \tfrac 12( \tr(\tilde A^2) - (\tr\tilde A)^2 + \tr(\tilde B^2) - (\tr \tilde B)^2)).
\end{aligned}
\]
Note that for every positive semidefinite matrix we have $\tr(P^2) \le (\tr P)^2$, with equality happening if and only if rank of $P$ is $\le 1$. Therefore,
\[
(1 - \tfrac12 \partial^2_z)(q(z,z))\bigg|_{z=0} \le (1-\partial_w \partial_z) q \bigg|_{z=w=0}.
\]
Hence, if the point $(t_1, \ldots, t_n)$ is above the roof of the polynomial \eqref{qz1}, the the same point is also above the roots of the polynomial \eqref{qz2}.
\end{proof}

Finally, we can establish the bound on the largest root of the quadratic mixed characteristic polynomial.

\begin{theorem}\label{qxx}
 Let $B_1, \ldots, B_m$ be $d \times d$ positive semidefinite matrices satisfying \eqref{qx0}. Then, the largest root of the polynomial 
\[
\mu_2[B_1,\ldots,B_m](x)= \bigg(\prod_{i=1}^m (1- \partial_{w_i}\partial_{z_i})\bigg) \det \bigg( x \mathbf I + \sum_{i=1}^m z_i B_i \bigg)\det \bigg( x \mathbf I + \sum_{i=1}^m w_i B_i \bigg) \bigg|_{\genfrac{}{}{0pt}{2}{z_1=\ldots=z_m=0}{w_1=\ldots=w_m=0}}
\]
is at most 4.
\end{theorem}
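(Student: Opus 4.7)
The plan is to prove by induction on $|T|$ the following two-parameter strengthening: for every pair of disjoint subsets $S, T \subseteq [m]$, the point $(4, 0, \ldots, 0)$ lies above the roots of the interpolating polynomial
\[
P_{S, T} = \prod_{i \in T}(1 - \partial_{z_i}\partial_{w_i}) \prod_{j \in S}\bigl(1 - \tfrac{1}{2}\partial^2_{z_j \equiv w_j}\bigr) Q \bigg|_{z_i = w_i = 0,\, i \in T},
\]
where $Q$ is the polynomial in \eqref{qx1}. At $S = \emptyset$, $T = [m]$ this specializes to $P_{\emptyset, [m]} = \mu_2[B_1, \ldots, B_m](x)$, a monic polynomial of degree $2d$ which is real-rooted because it arises as an expected polynomial via Lemma \ref{muxx} (with Rademacher $\xi_i$), so the point $x=4$ being above its roots gives the desired bound $\maxroot \mu_2 \le 4$. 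Real stability of each $P_{S, T}$ propagates from that of $Q$: the operator $(1 - \tfrac{1}{2}\partial^2_{z_j \equiv w_j})$ preserves stability by factoring the second-order derivative as $(1 - \tfrac{1}{\sqrt{2}}\partial_{z_j})(1 + \tfrac{1}{\sqrt{2}}\partial_{z_j})$ (Lemma \ref{spd}(i)) and then diagonalizing, while $(1 - \partial_{z_i}\partial_{w_i})$ has Borcea--Br\"and\'en symbol $1 - w_i w'_i$ whose stability follows from the short observation that $w_i w'_i = 1$ with $w_i \in \C_+$ forces $w'_i \notin \C_+$.

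The base case $|T| = 0$ is exactly Lemma \ref{qx}. For the inductive step, fix $(S, T)$ with $|T| = k + 1$, choose $i^* \in T$, and let $T_0 = T \setminus \{i^*\}$. Since the operators indexed by $[m] \setminus \{i^*\}$ and the restriction at $z_i = w_i = 0$ for $i \in T_0$ all commute with the operations on the pair $(z_{i^*}, w_{i^*})$, one obtains the two key identities
\[
\bigl(1 - \tfrac{1}{2}\partial^2_{z_{i^*} \equiv w_{i^*}}\bigr) P_{S, T_0} = P_{S \cup \{i^*\}, T_0}, \qquad (1 - \partial_{z_{i^*}}\partial_{w_{i^*}}) P_{S, T_0}\bigg|_{z_{i^*} = w_{i^*} = 0} = P_{S, T}.
\]
Apply Lemma \ref{qz} to the real stable polynomial $p = P_{S, T_0}$ with distinguished variables $(z_{i^*}, w_{i^*})$: its first hypothesis (above-roots of $p$ at the point with $z_{i^*} = w_{i^*} = 0$) is the inductive statement at $(S, T_0)$, and its second hypothesis (above-roots of $(1 - \tfrac{1}{2}\partial^2_{z_{i^*} \equiv w_{i^*}}) p|_{z_{i^*} = 0}$) is the inductive statement at $(S \cup \{i^*\}, T_0)$ specialized to $z_{i^*} = 0$. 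Both pairs have $|T_0| = k$ and are therefore covered by the inductive hypothesis, and the conclusion of Lemma \ref{qz} supplies the above-roots property for $P_{S, T}$, closing the induction.

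The main obstacle is the design of the induction. A naive linear induction that converts $(1 - \tfrac{1}{2}\partial^2_{z_i \equiv w_i})$ into $(1 - \partial_{z_i}\partial_{w_i})$ successively along a single chain stalls, because the first hypothesis of Lemma \ref{qz} requires above-roots for the polynomial in which the $i^*$-th operator is \emph{absent} (neither form applied), not merely present in the converted form, and this intermediate object is not produced by the previous step of such a chain. Broadening the induction to the full two-parameter family $\{P_{S, T}\}_{S \cap T = \emptyset}$ and inducting on $|T|$ alone repairs this: at level $|T| = k + 1$ both hypotheses of Lemma \ref{qz} reduce to instances at level $|T_0| = k$ with the $S$-parameter changed only by the possible insertion of the single element $\{i^*\}$.
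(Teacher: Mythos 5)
Your proof is correct and is essentially the paper's argument: both strengthen the claim to a two-parameter family indexed by disjoint sets (your $(S,T)$ are the paper's $(S,S')$), induct on $|T|$ with Lemma \ref{qx} as the base case, and close the induction by applying Lemma \ref{qz} with its two hypotheses supplied by the inductive statement at $(S,T_0)$ and at $(S\cup\{i^*\},T_0)$. The added discussion of why a naive single-chain induction stalls is a nice explanatory remark, but the underlying mechanism is the same as in the paper.
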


\begin{proof}
Let where $Q$ be the polynomial given by \eqref{qx1}.
We will show a stronger conclusion that for any disjoint sets $S,S' \subset [m]$, the point $(4,0,\ldots,0)$ lies above the roots of the polynomial
\begin{equation}\label{qxx2}
p(x,(z_i)_{i\in [m] \setminus S'}, (w_i)_{i\in [m] \setminus (S \cup S')})=\prod_{i\in S} (1 - \tfrac 12\partial^2_{z_i\equiv w_i}) \prod_{i\in S'} (1- \partial_{z_i}\partial_{w_i})Q \bigg|_{z_i=w_i=0,\ i \in S'}.
\end{equation}
Indeed, this claim is easily shown by induction on the size of the set $S'$. By Lemma \ref{spd} the polynomial $p$ is  stable. If $S'=\emptyset$, this claim is just Lemma \ref{qx}. Next suppose that for some $k=0,\ldots,m-1$, the point $(4,0,\ldots,0)$ lies above the roots of every polynomial $p$ of the form \eqref{qxx2} corresponding to any set $S'$ of size $k$ and any set $S$ with $S \cap S'=\emptyset$.  
To show that the same holds by adding an extra element $i_0\in [m] \setminus S $ to the set $S'$, observe that by induction hypothesis the point $(4,0,\ldots,0)$ lies above the roots of 
\[
(1 - \tfrac 12 \partial^2_{z_{i_0} \equiv w_{i_0}})p \bigg|_{z_{i_0}=0}.
\]
By Lemma \ref{qz}, the same point $(4,0,\ldots,0)$ lies above the roots of 
\[
(1 - \partial_{z_{i_0}}\partial_{w_{i_0}})p \bigg|_{z_{i_0}=w_{i_0}=0}
=
\prod_{i\in S} (1 - \tfrac 12\partial^2_{z_i\equiv w_i}) \prod_{i\in S'\cup\{i_0\}} (1- \partial_{z_i}\partial_{w_i})Q \bigg|_{z_i=w_i=0,\ i \in S'\cup\{i_0\}}.
\]
This shows that claim. Taking $S=\emptyset$ and $S'=[m]$ yields the lemma.
\end{proof}

We are now ready to prove Theorem \ref{kls}.

\begin{proof}[Proof of Theorem \ref{kls}]
Replacing random variables $\xi_i$ by their mean zero counterpart $\xi_i - \E{}{\xi_i}$, we can assume without loss of generality that $E{}{\xi_i}=0$ for all $i\in [m]$. Indeed, this change does not affect the definition of $\sigma^2$ nor the conclusion \eqref{kls2}. Likewise, by rescaling random variables $\xi_i$ by the factor $1/\sigma$, we can also assume that $\sigma=1$.

Consider a random polynomial $f_{\xi_1,\ldots,\xi_m}$ as in Lemma \ref{muxx}. By Theorems \ref{maxint} and \ref{qint}, with positive probability
\[
\maxroot f_{\xi_1,\ldots,\xi_m} \le \maxroot \E{}{f_{\xi_1,\ldots,\xi_m}}.
\]
By Lemma \ref{muxx} and Theorem \ref{qxx} with $B_i=\var{\xi_i}^{1/2}A_i$ we have
\[
\maxroot \E{}{f_{\xi_1,\ldots,\xi_m}}
=\maxroot \mu_2[\var{\xi_1}^{1/2} A_1,\ldots, \var{\xi_m}^{1/2} A_m] \le 4.
\]
Applying Lemma \ref{norm} yields that for some outcome we have
\[
\bigg\| \sum_{i=1}^m \xi_i A_i \bigg\|
\le \maxroot f_{\xi_1,\ldots,\xi_m} \le 4. 
\]
This proves \eqref{kls2}.
\end{proof}

The positive definite assumption in Theorem \ref{kls} is not essential and can be relaxed by hermitian assumption with slightly worse constant.

\begin{corollary}\label{klsh}
Suppose that $\xi_1,\ldots, \xi_m$ are independent scalar random variables which take finitely many values.  Let $B_1, \ldots, B_m$ be $d \times d$ hermitian matrices. Define 
\begin{equation}\label{klsh1}
\sigma^2= \max \bigg( \max_{i=1,\ldots,m} \var{\xi_i} \tr(|B_i|)^2, \bigg\| \sum_{i=1}^m \var{\xi_i} \tr(|B_i|) |B_i| \bigg\| \bigg),
\end{equation}
where $|B_i|=(B_iB_i^*)^{1/2}$ denotes the absolute value of $B_i$.
Then, for some outcome
\begin{equation}\label{klsh2}
\bigg\| \sum_{i=1}^m (\xi_i-\E{}{\xi_i}) B_i \bigg\| \le 8 \sigma .
\end{equation}
\end{corollary}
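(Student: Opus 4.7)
The plan is to reduce Corollary~\ref{klsh} to Theorem~\ref{kls} by a block-diagonal dilation, with the factor of $2$ increase in the constant (from $4$ to $8$) coming from a single triangle inequality at the end. The key subtlety to keep in mind is that applying Theorem~\ref{kls} twice separately---once to the positive parts of the $B_i$ and once to the negative parts---does \emph{not} suffice, because the ``positive probability'' outcomes Theorem~\ref{kls} guarantees for two different matrix families need not coincide; instead we must arrange a single application that controls both parts simultaneously.

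First I would decompose each hermitian $B_i$ into its positive and negative parts, $B_i = P_i - N_i$, where $P_i = (|B_i|+B_i)/2$ and $N_i = (|B_i|-B_i)/2$ are positive semidefinite with orthogonal ranges, so that $|B_i| = P_i + N_i$. Then I would form the $2d \times 2d$ block-diagonal positive semidefinite matrices
\[
A_i = \begin{pmatrix} P_i & 0 \\ 0 & N_i \end{pmatrix}, \qquad i=1,\ldots,m.
\]
The point of this embedding is that $\sum_i \xi_i A_i$ is block-diagonal with diagonal blocks $\sum_i \xi_i P_i$ and $\sum_i \xi_i N_i$, so its operator norm equals the larger of the two individual block norms.

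Next I would verify that the $\sigma$-parameter attached to $\{A_i\}$ in Theorem~\ref{kls} is dominated by the $\sigma$ of \eqref{klsh1}. Since $\tr(A_i)=\tr(P_i)+\tr(N_i)=\tr(|B_i|)$, we have $\max_i \var{\xi_i}\tr(A_i)^2=\max_i \var{\xi_i}\tr(|B_i|)^2 \le \sigma^2$. For the matrix term, $\sum_i \var{\xi_i}\tr(A_i)A_i$ is block-diagonal with blocks $\sum_i \var{\xi_i}\tr(|B_i|)P_i$ and $\sum_i \var{\xi_i}\tr(|B_i|)N_i$; the operator inequalities $0 \le P_i \le |B_i|$ and $0 \le N_i \le |B_i|$, multiplied by the nonnegative scalar $\var{\xi_i}\tr(|B_i|)$ and summed, show that each block is dominated in the positive semidefinite order by $\sum_i \var{\xi_i}\tr(|B_i|)|B_i|$. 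Hence the norm of the block-diagonal matrix is at most $\sigma^2$, confirming that Theorem~\ref{kls} applies with the same bound $\sigma$.

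Finally, Theorem~\ref{kls} applied to $\{A_i\}$ with the original random variables $\xi_i$ produces a \emph{single} outcome for which $\|\sum_i(\xi_i-\E{}{\xi_i})A_i\| \le 4\sigma$, and by the block structure this one outcome simultaneously gives both $\|\sum_i(\xi_i-\E{}{\xi_i})P_i\| \le 4\sigma$ and $\|\sum_i(\xi_i-\E{}{\xi_i})N_i\| \le 4\sigma$. The triangle inequality applied to $B_i = P_i - N_i$ then yields
\[
\bigg\| \sum_{i=1}^m (\xi_i-\E{}{\xi_i}) B_i \bigg\|
\le \bigg\| \sum_{i=1}^m (\xi_i-\E{}{\xi_i})P_i \bigg\|
+ \bigg\| \sum_{i=1}^m (\xi_i-\E{}{\xi_i})N_i \bigg\|
\le 8\sigma,
\]
which is \eqref{klsh2}. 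There is no real obstacle beyond recognizing that the block-diagonal embedding is the correct device for securing a \emph{common} outcome controlling both $P_i$- and $N_i$-sums at once, which is exactly what the triangle inequality requires.
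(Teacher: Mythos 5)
Your proof is correct and follows essentially the same route as the paper: the block-diagonal dilation $A_i=\operatorname{diag}((B_i)_+,(B_i)_-)$, the verification $\tr(A_i)=\tr(|B_i|)$ and $A_i\le\operatorname{diag}(|B_i|,|B_i|)$ to dominate the $\sigma$-parameter, a single application of Theorem~\ref{kls}, and the triangle inequality on $B_i=(B_i)_+-(B_i)_-$. Your explicit remark on why the block embedding is needed---to secure a \emph{common} outcome controlling the positive and negative parts simultaneously---is a point the paper's proof uses implicitly but does not spell out.
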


\begin{proof}
Consider $2d\times 2d$ matrices $A_1, \ldots, A_m$ given in block diagonal form as
\[
A_i = \begin{bmatrix} (B_i)_+ & \\ & (B_i)_-
\end{bmatrix},
\]
where $B_+$ and $B_-$ denote positive and negative parts of a hermitian matrix $B$. Clearly, $\tr(A_i)=\tr(|B_i|)$ and
\[
A_i \le \begin{bmatrix} |B_i| & \\ & |B_i|
\end{bmatrix}.
\]
Hence, the constant $\sigma$ in \eqref{kls1} is dominated by that in \eqref{klsh1}.
By Theorem \ref{kls} there exists an outcome such that \eqref{kls2} holds. That is, for some outcome we have
\[
\bigg\| \sum_{i=1}^m (\xi_i-\E{}{\xi_i}) (B_i)_+ \bigg\|\le 4\sigma
\quad\text{and}\quad
\bigg\| \sum_{i=1}^m (\xi_i-\E{}{\xi_i}) (B_i)_- \bigg\| \le 4 \sigma.
\]
Since $B_i=(B_i)_+-(B_i)_-$, the above yields \eqref{klsh2}.
\end{proof}

\section{Akemann-Weaver conjecture}\label{S4}

In this section we prove the Akemann-Weaver conjecture which was posed in \cite{AW}. That is, we establish the approximate Lyapunov's theorem for trace class operators generalizing rank one Lyapunov-type result of Akemann and Weaver \cite[Theorem 2.4]{AW} by simultaneously improving the bound from $O(\epsilon^{1/8})$ to $O(\epsilon^{1/2})$. The bound improvement for rank one operators was first shown by Kyng, Luh, and Song \cite[Corollary 1.8]{KLS} with explicit small constant that was further improved in \cite{XXZ1}. We show that the same bound holds for higher ranks positive definite matrices as well as positive trace class operators thus resolving the Akemann-Weaver conjecture in affirmative.

\begin{theorem}
\label{aw}
Let $I$ be countable and $\epsilon>0$. Suppose that $\{T_i\}_{i\in I}$ is a family of positive trace class operators in a separable Hilbert space $\mathcal{H}$ such that
\begin{equation}\label{aws}
\sum_{i\in I} T_i \le \bI \quad\text{and}\quad \tr(T_i)\le \epsilon \qquad\text{for all }i \in I.
\end{equation}
Suppose that $0\le t_i \le 1$ for all $i\in I$. Then, there exists a subset of indices $I_0 \subset I$ such that
\begin{equation}\label{aw0}
\bigg\| \sum_{i\in I_0} T_i  - \sum_{i\in I} t_i T_i  \bigg\| \le 2 \epsilon^{1/2}.
\end{equation}
\end{theorem}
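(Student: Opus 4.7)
The plan is to reduce Theorem \ref{aw} to the matrix discrepancy estimate Theorem \ref{kls} via a Bernoulli sampling, and then extend from finite matrices to general trace-class operators through a two-stage approximation: first a finite-dimensional compression for fixed finite $I$, then a diagonal compactness extraction for countable $I$.

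In the case of a finite index set $I$ and a finite-dimensional Hilbert space, I would take independent Bernoulli random variables $\xi_i$ with $\P{}{\xi_i=1}=t_i$ and $\P{}{\xi_i=0}=1-t_i$, so that $\E{}{\xi_i}=t_i$ and $\var{\xi_i}=t_i(1-t_i)\le 1/4$. Applying Theorem \ref{kls} with $A_i=T_i$ and using $\tr(T_i)\le\epsilon$ together with $\sum_iT_i\le\bI$, the parameter satisfies
\[
\sigma^2 \le \max\bigg(\frac{\epsilon^2}{4},\,\bigg\|\frac{\epsilon}{4}\sum_i T_i\bigg\|\bigg)\le \frac{\epsilon}{4},
\]
where we may assume $\epsilon\le 1$ since otherwise $2\sqrt{\epsilon}\ge 2\ge 2\|\sum_i T_i\|$ makes \eqref{aw0} trivial by taking $I_0=\emptyset$. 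Theorem \ref{kls} then produces an outcome with $\|\sum_i(\xi_i-t_i)T_i\|\le 4\sigma\le 2\sqrt{\epsilon}$, and the choice $I_0=\{i:\xi_i=1\}$ yields \eqref{aw0}.

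For finite $I$ on an infinite-dimensional separable Hilbert space $\Hil$, I would pass to finite dimensions by compression: fix an increasing sequence of finite-rank orthogonal projections $P_N$ with $P_N\to\bI$ strongly. The compressions $P_NT_iP_N$ are positive semidefinite matrices on $\range P_N$ that still satisfy $\sum_i P_NT_iP_N=P_N(\sum_iT_i)P_N\le P_N\le\bI$ and $\tr(P_NT_iP_N)\le\tr(T_i)\le\epsilon$. The finite-dimensional step produces, for each $N$, a subset $I_0^N\subset I$ satisfying the bound for the compressions. Since $I$ is finite, one subset $I_0$ arises for infinitely many $N$; because each $T_i$ is trace class, $P_NT_iP_N\to T_i$ in trace norm and hence in operator norm, so passing to the limit along that subsequence transfers the bound to the $T_i$ themselves.

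Finally, for countable $I$, enumerate $I=\{i_1,i_2,\ldots\}$, set $F_n=\{i_1,\ldots,i_n\}$, and apply the previous step to each finite subfamily $\{T_i\}_{i\in F_n}$ (for which $\sum_{i\in F_n}T_i\le\bI$ and $\tr(T_i)\le\epsilon$) to obtain $\xi^n\in\{0,1\}^{F_n}$ with $\|\sum_{i\in F_n}(\xi^n_i-t_i)T_i\|\le 2\sqrt{\epsilon}$. Extending each $\xi^n$ to $\{0,1\}^I$ by zero and extracting a pointwise convergent subsequence $\xi^{n_k}\to\xi$ by a diagonal argument on the compact metrizable space $\{0,1\}^I$, I set $I_0=\{i:\xi_i=1\}$. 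The main obstacle is that $\sum_it_iT_i$ and $\sum_i\xi_iT_i$ converge only in the strong operator topology, so the norm bound must be verified via test vectors. Since $D:=\sum_{i\in I_0}T_i-\sum_{i\in I}t_iT_i$ is self-adjoint, $\|D\|=\sup_{\|v\|=1}|\langle Dv,v\rangle|$; for a unit vector $v$ and $\delta>0$, choose a finite $G\subset I$ with $\sum_{i\notin G}\langle T_iv,v\rangle<\delta$ and then $k$ large enough that $G\subset F_{n_k}$ and $\xi_i=\xi^{n_k}_i$ for all $i\in G$. Splitting $\langle Dv,v\rangle$ into the $F_{n_k}$-contribution $\sum_{i\in F_{n_k}}(\xi^{n_k}_i-t_i)\langle T_iv,v\rangle$ (bounded by $2\sqrt{\epsilon}$ via the operator-norm estimate from the previous step), a coincidence correction on $F_{n_k}\setminus G$ (bounded by $\sum_{i\notin G}\langle T_iv,v\rangle<\delta$), and a tail outside $F_{n_k}$ (which tends to zero as $k\to\infty$ since $\sum_i\langle T_iv,v\rangle\le 1$ converges), one obtains $|\langle Dv,v\rangle|\le 2\sqrt{\epsilon}+\delta$, and letting $\delta\to 0$ completes the proof.
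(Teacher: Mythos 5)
Your proposal is correct and follows essentially the same route as the paper: Bernoulli random variables with $\var{\xi_i}\le 1/4$ feeding into Theorem \ref{kls} to get the $2\sqrt{\epsilon}$ bound in finite dimensions, followed by a finite-rank approximation and pigeonhole for finite index sets, and a compactness/diagonal extraction for countable $I$. The only cosmetic difference is that you use compressions $P_N T_i P_N$ where the paper uses approximating operators $T_i^{(n)}$ with $0\le T_i^{(n)}\le T_i$, and you spell out the diagonal argument and test-vector verification rather than citing the pinball principle from \cite[Lemma 2.7]{MB}; both of these are equivalent in effect.
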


Before proving Theorem \ref{aw} we explore some consequences of a higher rank extension of Marcus-Spielman-Srivastava result, Theorem \ref{thmp}.

\begin{lemma}\label{limp}
Let $\epsilon>0$. Suppose $A_1, \ldots, A_m$ are $d \times d$ positive semidefinite matrices satisfying 
\begin{equation}
A:=\sum_{i=1}^m A_i \le \bI 
\qquad\text{and}\qquad
\tr(A_i) \le \epsilon \quad\text{for all i}.
\end{equation}
Let $t_k>0$, $k=1,\ldots, r$, where $r\in \N$, be such that $\sum_{k=1}^r t_k=1$. Then, there exists a partition $\{I_1,\ldots, I_r\}$ of $[m]$ such that such that
\begin{equation}\label{limp0}
 \sum_{i\in I_k} A_i  \le t_k (A  + (2\sqrt{r\epsilon} + r\epsilon) \mathbf I).
\end{equation}
\end{lemma}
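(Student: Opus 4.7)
The plan is to build a suitable family of positive semidefinite random matrices of size $rd \times rd$ to which Theorem \ref{thmp} applies, and then read off the partition from a single favorable outcome.

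First, introduce jointly independent random indices $\xi_1, \ldots, \xi_m$ with $\PP(\xi_i = k) = t_k$ for $k \in [r]$, and set $I_k := \{i \in [m] : \xi_i = k\}$, which yields a random partition of $[m]$. For each $i \in [m]$, define the $rd \times rd$ block-diagonal random matrix
\[
\tilde{A}_i = \diag\bigg( \frac{\mathbf 1_{\xi_i=1}}{t_1} A_i, \ldots, \frac{\mathbf 1_{\xi_i=r}}{t_r} A_i \bigg).
\]
Then $\tilde A_i$ is positive semidefinite, $\E{}{\tilde A_i}$ has each of its $r$ diagonal blocks equal to $A_i$, and $\E{}{\tr \tilde A_i} = r \tr(A_i) \le r\epsilon$. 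Applying Theorem \ref{thmp} to the family $\{\tilde A_i\}$ alone would give $\sum_i \E{}{\tilde A_i} = \diag(A,\ldots,A) \le \bI_{rd}$ and produce only $\|\sum_{i \in I_k} A_i\| \le t_k(1+\sqrt{r\epsilon})^2$, which is weaker than the desired matrix bound with $A$ on the right.

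To get the sharper bound, adjoin a deterministic ``filler'' that pushes the mean all the way up to the identity. Since $\bI - A$ is positive semidefinite, write $\bI - A = \sum_{j=1}^N B_j$ with each $B_j$ positive semidefinite and $\tr(B_j) \le \epsilon$; this is achieved by taking the spectral decomposition of $\bI - A$ and subdividing any eigenvalue that exceeds $\epsilon$ into several copies of the corresponding rank-one projection. For each $j$, let $\tilde B_j$ be the deterministic $rd \times rd$ block-diagonal matrix having every diagonal block equal to $B_j$, so $\tr(\tilde B_j) = r\tr(B_j) \le r\epsilon$. The enlarged family $\{\tilde A_i\}_{i=1}^m \cup \{\tilde B_j\}_{j=1}^N$ is jointly independent, consists of positive semidefinite matrices with expected trace at most $r\epsilon$, and its expectations now sum to $\diag(A + (\bI - A),\ldots, A + (\bI - A)) = \bI_{rd}$.

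Applying Theorem \ref{thmp} (with parameter $r\epsilon$) to this enlarged family produces, with positive probability, an outcome for which
\[
\bigg\| \sum_{i=1}^m \tilde A_i + \sum_{j=1}^N \tilde B_j \bigg\| \le (1 + \sqrt{r\epsilon})^2.
\]
The sum is block-diagonal with $k$-th block equal to $\tfrac{1}{t_k}\sum_{i \in I_k} A_i + (\bI - A)$, which is positive semidefinite, so the norm estimate gives the matrix inequality $\tfrac{1}{t_k}\sum_{i \in I_k} A_i + (\bI - A) \le (1+\sqrt{r\epsilon})^2 \bI$ for every $k$. Rearranging and using $(1+\sqrt{r\epsilon})^2 - 1 = 2\sqrt{r\epsilon} + r\epsilon$, then multiplying by $t_k$, yields \eqref{limp0}.

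The main conceptual step is Step~3: the naive block-diagonal construction only yields a bound of the form $t_k(1+\sqrt{r\epsilon})^2 \bI$, which is strictly weaker than the claimed $t_k(A + (2\sqrt{r\epsilon}+r\epsilon)\bI)$ whenever $\|A\| < 1$. The idea of adjoining the deterministic filler $\bI - A$, decomposed into small-trace pieces so that Theorem \ref{thmp} still applies, is precisely what shifts the MSS baseline from $\diag(A,\ldots,A)$ to $\bI_{rd}$, thereby placing the excess $2\sqrt{r\epsilon} + r\epsilon$ on $\bI$ while leaving $A$ in its correct position on the right-hand side.
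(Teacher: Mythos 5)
Your proof is correct and follows essentially the same route as the paper: the paper's random matrices $X_i$ are exactly your $\tilde A_i$ (block-diagonal with a $t_k^{-1}A_i$ in the randomly chosen $k$-th block), the deterministic fillers $Y_j$ built from a small-trace decomposition of $\mathbf I - A$ are your $\tilde B_j$, and both proofs then apply Theorem \ref{thmp} with parameter $r\epsilon$ to the combined family and read off the block-wise inequality. No gap.
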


\begin{proof}
Since $\mathbf I-A$ is positive semidefinite, we can find a collection $B_1,\ldots,B_{m'}$ of positive semidefinite matrices  such that
\[
\mathbf I-A = \sum_{i=1}^{m'} B_i \qquad\text{and}\qquad \tr(B_i) \le \epsilon \text{ for }i \in [m'].
\]
Indeed, we can can construct such rank 1 matrices by choosing vectors $\{u_i\}_{i=1}^{m'}$ to be appropriately scaled eigenvectors of $\mathbf I-A$ so that $||u_i||^2 \le \epsilon$, and letting $B_i=u_i u_i^*$.

Let $X_1,\ldots,X_m$ be jointly independent $dr \times dr$ random matrices such that each matrix $X_i$ is block diagonal 
taking values
\[
\begin{bmatrix} t_1^{-1} A_i  &  & & \\  & \mathbf 0_d & & \\ &&\ddots & \\ &&& \mathbf 0_d \end{bmatrix}, \ldots,
\begin{bmatrix}    \mathbf 0_d & &&\\ &\ddots&&  \\ &&\mathbf 0_d& \\ &&&t_r^{-1} A_i\end{bmatrix}
\]
with probability $t_k$, $k=1,\ldots,r$, respectively. Here, $\mathbf 0_d$ denotes $d \times d$ zero matrix. Let $Y_1,\ldots,Y_{m'}$ be $dr \times dr$ deterministic block diagonal matrices taking one value
\[
Y_i=\begin{bmatrix} B_i & & \\
& \ddots & \\
& &  B_i
\end{bmatrix}
\text{ with probability $1$}.
\]
Then,
\[
\begin{aligned}
\sum_{i=1}^m \E{}{X_i} + \sum_{i=1}^{m'} \E{}{Y_i}
& =  \begin{bmatrix} \sum_{i=1}^m A_i &   &  \\   & \ddots &  \\  &  & \sum_{i=1}^m A_i \end{bmatrix} + 
\begin{bmatrix} \sum_{i=1}^{m'} B_i &   &  \\   & \ddots &  \\  &  & \sum_{i=1}^{m'} B_i \end{bmatrix} 
=\mathbf I_{dr},
\\
\E{}{\tr(X_i) }  & = \sum_{k=1}^r t_k (t_k)^{-1} \tr(A_i) \le r\epsilon \qquad i=1,\ldots,m,\\
\E{}{\tr(Y_i) }  & = \sum_{k=1}^r \tr(B_i) \le r\epsilon \qquad i=1,\ldots,m'.
\end{aligned}
\]
Hence, we can apply Theorem \ref{thmp} for random matrices $X_1,\ldots, X_m,Y_1,\ldots,Y_{m'}$ with $r\epsilon$ in place of $\epsilon$. Choose an outcome for which the bound in \eqref{thmp2} happens. For this outcome define
\[
I_k = \{i \in [m]: X_i \text{ has non-zero in $k^{\rm th}$ entry} \}, \qquad  k=1,\ldots,r.
\]
Thus, the block diagonal matrix
\[
\sum_{i=1}^m X_i + \sum_{i=1}^{m'} Y_i
= \begin{bmatrix}  t_1^{-1} \sum_{i\in I_1} A_i + \mathbf I - A &   &  \\   & \ddots &  \\  &  & t_r^{-1} \sum_{i\in I_r} A_i + \mathbf I - A \end{bmatrix}
\]
has norm bounded by $(1 + \sqrt{r \epsilon})^2$. Consequently, for any $k=1,\ldots, r$,
\[
\sum_{i\in I_k} A_i \le -t_k(\mathbf I - A) + t_k(1+2\sqrt{r\epsilon} + r\epsilon) \mathbf I = t_k A + t_k(2\sqrt{r\epsilon} + r\epsilon) \mathbf I.
\]
This proves \eqref{limp0}.
\end{proof}

Lemma \ref{limp} implies a higher rank of Weaver's $KS_r$ conjecture, which was conjectured in \cite{AW}. Corollary \ref{MSS} in the case of rank one matrices was shown by Marcus-Spielman-Srivastava in their solution of the Kadison-Singer problem \cite[Corollary 1.3]{MSS}, see also \cite[Lemma 2.1]{AW}.

\begin{corollary}[Higher rank $KS_r$ conjecture] \label{MSS}
Let $\epsilon>0$. Suppose $A_1, \ldots, A_m$ are $d \times d$ positive semidefinite matrices satisfying \eqref{mixed1}, i.e.,
\begin{equation}\label{ms1}
\sum_{i=1}^m A_i \le \bI 
\qquad\text{and}\qquad
\tr(A_i) \le \epsilon \quad\text{for all i}.
\end{equation}
Let $t_k>0$, $k=1,\ldots, r$, where $r\in \N$, be such that $\sum_{k=1}^r t_k=1$. Then, there exists a partition $\{I_1,\ldots, I_r\}$ of $[m]$ such that 
\begin{equation}\label{ms2}
\bigg\| \sum_{i\in  I_k} A_i \bigg\| \le t_k \left(1 + \sqrt{r \epsilon}\right)^2 \qquad\text{for all } k=1,\ldots,r.
\end{equation}
\end{corollary}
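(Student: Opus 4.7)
The plan is to deduce this immediately from Lemma \ref{limp}. Indeed, the hypotheses on $A_1,\ldots,A_m$ match verbatim, and the partition $\{I_1,\ldots,I_r\}$ supplied by the lemma satisfies
\[
\sum_{i \in I_k} A_i \le t_k\bigl(A + (2\sqrt{r\epsilon} + r\epsilon)\mathbf I\bigr), \qquad k=1,\ldots,r,
\]
where $A = \sum_{i=1}^m A_i$.

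Next I would use the hypothesis $A \le \mathbf I$ to replace the matrix $A$ by the identity on the right-hand side. This yields
\[
\sum_{i \in I_k} A_i \le t_k\bigl(1 + 2\sqrt{r\epsilon} + r\epsilon\bigr)\mathbf I = t_k (1 + \sqrt{r\epsilon})^2\, \mathbf I.
\]
Taking operator norms on both sides (and using that each $\sum_{i \in I_k} A_i$ is positive semidefinite, so its norm equals the smallest constant $c$ with $\sum_{i \in I_k} A_i \le c\,\mathbf I$) gives the desired bound \eqref{ms2}.

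Since Lemma \ref{limp} is already established, there is no real obstacle here; the corollary is essentially a cosmetic rephrasing of Lemma \ref{limp} obtained by discarding the finer matrix inequality in favor of the coarser norm inequality and using $A \le \mathbf I$. The only minor subtlety is to notice that the hypothesis $\sum_i A_i \le \mathbf I$ in Lemma \ref{limp} is used twice in the argument above: once implicitly in the proof of the lemma and once explicitly to pass from the matrix bound to the scalar constant $(1+\sqrt{r\epsilon})^2$.
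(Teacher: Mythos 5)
Your proof is correct and takes exactly the route the paper intends: the paper states Corollary \ref{MSS} as an immediate consequence of Lemma \ref{limp}, and passing from the matrix inequality via $A\le\mathbf I$ to the norm bound $(1+\sqrt{r\epsilon})^2$ is the intended (unwritten) one-line argument.
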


In the special case $r=2$ and $t_1=t_2=1/2$, Corollary \ref{MSS} implies that we can partition any set of positive semidefinite matrices with small traces that sum up to the identity matrix, into two subsets each of which sums up to approximately half the identity matrix. That is, under the assumption 
\begin{equation}\label{ms3}
\sum_{i=1}^m A_i = \bI 
\qquad\text{and}\qquad
\tr(A_i) \le \epsilon \quad\text{for all i},
\end{equation}
there exists a partition $\{I_1,I_2\} \subset [m]$ such that
\[
\bigg\| \sum_{i\in I_k} A_i - \tfrac12 \mathbf I \bigg\| = O(\sqrt{\epsilon}) \qquad \text{as }\epsilon \to 0, \ k=1,2.
\]

Next we extend Lemma \ref{limp} to an infinite dimensional setting. In the proof of Theorem \ref{imp} we control only upper bounds in a transition from finite to infinite collections of operators and deduce the lower bounds only at the very end (as a consequence of the upper bounds).


\begin{theorem}\label{imp}
Let $I$ be countable and $\epsilon>0$. Suppose that $\{T_i\}_{i\in I}$ is a family of positive trace class operators in a separable Hilbert space $\mathcal{H}$ satisfying 
\begin{equation}\label{aws0}
T:=\sum_{i\in I} T_i \le \bI \quad\text{and}\quad \tr(T_i)\le \epsilon \qquad\text{for all }i \in I.
\end{equation}
Let $t_k>0$, $k=1,\ldots, r$, where $r\in \N$, be such that $\sum_{k=1}^r t_k=1$. Then, there exists a partition $\{I_1,\ldots, I_r\}$ of $I$ such that 
\begin{equation}\label{imp0}
\bigg\| \sum_{i\in  I_k} T_i - t_k T  \bigg\|  
\le 2\sqrt{r\epsilon} +r\epsilon \qquad\text{for all } k=1,\ldots,r.
\end{equation}
\end{theorem}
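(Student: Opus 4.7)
The strategy is to bootstrap Lemma \ref{limp} from finite-dimensional matrices up to trace class operators via spectral truncation together with a compactness/diagonalization argument. Enumerate $I$ as a subset of $\N$ (the finite case reduces to a single limit). Fix an orthonormal basis $\{e_k\}_{k=1}^\infty$ of $\mathcal{H}$ and let $P_n$ denote the orthogonal projection onto $\spa(e_1,\ldots,e_n)$.

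For each $n\in\N$, I apply Lemma \ref{limp} to the finitely many positive semidefinite operators $P_n T_i P_n$, regarded as $n\times n$ matrices acting on $P_n\mathcal{H}$, ranging over $i \in I$ with $i\le n$. The hypotheses are verified since
\[
\sum_{i\in I,\, i\le n} P_n T_i P_n \le P_n T P_n \le P_n \le \bI_{P_n\mathcal{H}},
\]
and $\tr(P_n T_i P_n)\le \tr(T_i)\le \epsilon$. The lemma produces a partition $\{I_1^{(n)},\ldots,I_r^{(n)}\}$ of $I\cap[n]$ such that for each $k$,
\[
\sum_{i\in I_k^{(n)}} P_n T_i P_n \le t_k\bigg(\sum_{i\in I,\, i\le n} P_n T_i P_n + (2\sqrt{r\epsilon}+r\epsilon)\bI_{P_n\mathcal{H}}\bigg).
\]

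Next I extract a limiting partition. Define $f_n:I\to[r]$ by setting $f_n(i)=k$ on $I_k^{(n)}$, extending arbitrarily for $i>n$. Since $[r]^I$ is compact in the product topology, a diagonal argument yields a subsequence $(n_j)$ such that $f_{n_j}\to f$ pointwise for some $f:I\to[r]$; put $I_k:=f^{-1}(k)$. For any finite $F\subset I$, eventually $I_k^{(n_j)}\cap F = I_k\cap F$. Testing the preceding operator inequality against a fixed $x\in\mathcal{H}$ and using $P_n S P_n x \to Sx$ in norm for every bounded $S$, I let $n_j\to\infty$ to obtain
\[
\bigg\langle\sum_{i\in I_k\cap F} T_i\, x,\, x\bigg\rangle \le t_k\bigg(\langle Tx,x\rangle + (2\sqrt{r\epsilon}+r\epsilon)\|x\|^2\bigg).
\]
Letting $F$ exhaust $I$ (using that $\sum_{i\in F} T_i$ is a monotone increasing net of positive operators bounded above by $T$, hence strongly convergent) and varying $x$ yields
\[
\sum_{i\in I_k} T_i \le t_k\big(T+(2\sqrt{r\epsilon}+r\epsilon)\bI\big) \qquad\text{for every }k=1,\ldots,r.
\]

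The matching lower bound is then automatic. Since the $I_k$ partition $I$ and $T=\sum_{l}\sum_{i\in I_l} T_i$, summing the upper bounds over $l\ne k$ gives
\[
T - \sum_{i\in I_k} T_i = \sum_{l\ne k}\sum_{i\in I_l} T_i \le (1-t_k)\big(T + (2\sqrt{r\epsilon}+r\epsilon)\bI\big),
\]
hence $\sum_{i\in I_k} T_i - t_k T \ge -(1-t_k)(2\sqrt{r\epsilon}+r\epsilon)\bI$. Combined with $\sum_{i\in I_k} T_i - t_k T \le t_k(2\sqrt{r\epsilon}+r\epsilon)\bI$, this yields $\|\sum_{i\in I_k} T_i - t_k T\| \le \max(t_k,1-t_k)(2\sqrt{r\epsilon}+r\epsilon) \le 2\sqrt{r\epsilon}+r\epsilon$. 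The principal obstacle is the simultaneous handling of two limits, the spectral truncation $P_n\uparrow\bI$ and the exhaustion $F\uparrow I$; this is managed by passing to quadratic forms $\langle \cdot\, x, x\rangle$, where strong operator convergence is sufficient to preserve the inequalities.
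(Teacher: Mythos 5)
Your proof is correct, and it reaches the same conclusion via a mildly different route from the paper. The paper separates the argument into two stages: first it handles finite $I$, approximating each $T_i$ from below by finite-rank operators $T_i^{(n)} \le T_i$ with $\|T_i - T_i^{(n)}\|\to 0$, applying Lemma~\ref{limp} to the resulting matrices, and using the pigeonhole principle (there are only finitely many partitions of a finite $I$) to extract a single partition that works for infinitely many $n$; then, for countable $I$, it applies this finite result to $\{T_i\}_{i\in[n]}$ and invokes the pinball principle (or a compactness argument as in Akemann--Weaver) to stitch the nested finite partitions into a global one. You instead compress by orthogonal projections $P_n T_i P_n$ onto an exhausting family of finite-dimensional subspaces and run a single diagonal/compactness argument in $[r]^I$ that handles finite and countable $I$ uniformly, passing to quadratic forms to tame the two limits ($P_n\uparrow\bI$ and $F\uparrow I$). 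The endgame --- deriving the two-sided operator bound by summing the upper bounds over $l\ne k$ and then taking $\max(t_k,1-t_k)\le 1$ --- is exactly as in the paper. What your variant buys is a more streamlined, single-pass limiting argument; the paper's version is arguably cleaner in isolating the finite-$I$ statement as an intermediate result (useful since it needs only the pigeonhole principle, with the extra compactness machinery confined to the genuinely infinite case). One small point worth making explicit: sequential compactness of $[r]^I$ in the product topology uses countability of $I$, which is part of the hypothesis, so your diagonal extraction is legitimate.
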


\begin{proof}
First, we consider the case when $I$ is finite. For any $i\in I$, choose a sequence of positive finite rank operators $\{T^{(n)}_i\}$ such that
\begin{equation}\label{iaw3}
0 \le T_i^{(n)} \le T_i \qquad\text{and}\qquad \lim_{n\to\infty} || T_i - T^{(n)}_i || =0.
\end{equation}
Take any $n\in \N$. Since operators $\{T^{(n)}_i\}_{i\in I}$ act non-trivially on some finite dimensional subspace $\mathcal K \subset \mathcal H$ we can identify them with positive semidefinite matrices on $\C^d$, $d =\dim \mathcal K$.
By Lemma \ref{limp} applied for a finite collection $\{T^{(n)}_i\}_{i\in I}$, we obtain a partition $\{I_1^n, \ldots, I_r^n\}$ of $I$ such that
\begin{equation}\label{iaw4}
\sum_{i\in  I_k^n} T^{(n)}_i  \le t_k \bigg( \sum_{i\in I} T^{(n)}_i + (2\sqrt{r\epsilon} + r\epsilon) \mathbf I\bigg)
\le t_k ( T + (2\sqrt{r\epsilon} + r\epsilon) \mathbf I)
\qquad\text{for all } k=1,\ldots,r.
\end{equation}
By pigeonhole principle, for infinitely many $n\in\N$, $\{I_1^n, \ldots, I_r^n\}$ is some fixed partition $\{I_1,\ldots,I_r\}$ of $I$. Taking limit in \eqref{iaw4} as $n\to\infty$ yields 
\begin{equation}\label{iaw2}
\sum_{i\in  I_k} T_i  \le t_k ( T + (2\sqrt{r\epsilon} + r\epsilon) \mathbf I)
\qquad\text{for all } k=1,\ldots,r.
\end{equation}

Now suppose that $I$ is countable. We may assume $I=\N$. For any $n\in \N$, we apply the above to the family $\{T_i\}_{i\in [n]}$. This yields  a partition $\{I_1^n, \ldots, I_r^n\}$ of $[n]$  such that \eqref{iaw2} holds with $I^n_k$ in place of $I_k$, $k=1,\ldots,r$. To show the existence of a global partition of $\{I_1,\ldots,I_r\}$ of $\N$ satisfying \eqref{iaw2}, it suffices to apply the pinball principle, see \cite[Lemma 2.7]{MB}. Alternatively, we can use a compactness arguments as in the proof of \cite[Theorem 3.1]{AW}. Either way we obtain \eqref{iaw2}.

Finally, we deduce the estimate \eqref{imp0} from \eqref{iaw2}. Fix $k_0=1,\ldots,r$. Adding up \eqref{iaw2} over $k\ne k_0$ yields
\[
T - \sum_{i\in  I_{k_0}} T_i = \sum_{ \genfrac{}{}{0pt}{}{k=1}{k \ne k_0}}^r \sum_{i\in I_k} T_i \le (1-t_{k_0}) ( T + (2\sqrt{r\epsilon} + r\epsilon) \mathbf I).
\]
By rearranging we obtain
\begin{equation}\label{iaw6}
-(1-t_{k_0})  (2\sqrt{r\epsilon} + r\epsilon) \mathbf I \le \sum_{i\in  I_{k_0}} T_i - t_{k_0}T.
\end{equation}
Combining \eqref{iaw2} with \eqref{iaw6} yields 
\[
\bigg\|\sum_{i\in  I_{k_0}} T_i - t_{k_0}T \bigg\| \le \max(t_{k_0},1-t_{k_0}) (2\sqrt{r\epsilon} + r\epsilon).
\]
In particular, we have \eqref{imp0}.
\end{proof}

As an immediate corollary of Theorem \ref{imp} we obtain the special case of Theorem \ref{aw}, which simultaneously gives improved bounds and extension to the higher rank one case of \cite[Lemma 2.3]{AW}, see also \cite[Corollary 2.3]{FY}. 

\begin{corollary}\label{iawc} Suppose that $\{T_i\}_{i\in I}$ is a family of positive trace class operators in a separable Hilbert space $\mathcal{H}$ satisfying \eqref{aws0} for some $\epsilon>0$. Then, for any $0<t<1$, there exists a subset $I_ 0 \subset I$ such that
\[
\bigg\| \sum_{i \in I_0} T_i - t T \bigg\| \le  2\sqrt{2\epsilon} +2\epsilon =O(\sqrt{\epsilon}) \qquad\text{as }\epsilon \to 0.
\]
\end{corollary}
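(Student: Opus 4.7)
The plan is to apply Theorem~\ref{imp} directly with $r=2$. Given $0<t<1$, set $t_1:=t$ and $t_2:=1-t$, so that $t_1,t_2>0$ and $t_1+t_2=1$. The hypotheses \eqref{aws0} on the family $\{T_i\}_{i\in I}$ are precisely those required by Theorem~\ref{imp}, so the theorem yields a partition $\{I_1,I_2\}$ of $I$ satisfying
\[
\bigg\|\sum_{i\in I_k} T_i - t_k T\bigg\| \le 2\sqrt{2\epsilon}+2\epsilon, \qquad k=1,2.
\]
Taking $I_0:=I_1$ gives exactly the bound stated in the corollary, and since $2\sqrt{2\epsilon}+2\epsilon=O(\sqrt{\epsilon})$ as $\epsilon\to 0$, the asymptotic claim follows as well.

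There is essentially no obstacle: the corollary is just the $r=2$ specialization of Theorem~\ref{imp}, with the constant $2\sqrt{r\epsilon}+r\epsilon$ evaluated at $r=2$. The only thing worth double-checking is that the identification $I_0=I_1$ (as opposed to $I_2$) is allowed for any $t\in(0,1)$; this is clear because Theorem~\ref{imp} treats the two indices symmetrically. If one wished to bypass Theorem~\ref{imp}, the substantive work one would have to redo is the entire chain developed in Sections~\ref{S2}--\ref{S4}: the real-rootedness and monotonicity properties of the mixed characteristic polynomial (Lemmas~\ref{crs} and~\ref{mono}), the interlacing family argument of Theorem~\ref{qint} packaged with the barrier-function estimate of Theorem~\ref{qxx}, the higher-rank Marcus--Spielman--Srivastava bound Theorem~\ref{thmp}, and finally the finite-dimensional block-diagonal reduction of Lemma~\ref{limp} together with the pigeonhole/compactness passage from finite $I$ to countable $I$ carried out inside the proof of Theorem~\ref{imp}. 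With that machinery already in place, however, the corollary follows in a single line.
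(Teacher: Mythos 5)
Your proof is correct and is exactly the paper's intended argument: the paper introduces Corollary~\ref{iawc} with the phrase ``as an immediate corollary of Theorem~\ref{imp},'' and your specialization $r=2$, $t_1=t$, $t_2=1-t$, with $I_0:=I_1$ and the constant $2\sqrt{r\epsilon}+r\epsilon$ evaluated at $r=2$, is precisely that deduction. Nothing further is needed.
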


Using the approach of Akemann and Weaver \cite[Theorem 2.4]{AW} and Corollary  \ref{iawc} we can show Theorem \ref{aw} with a suboptimal bound  $O(\epsilon^{1/4})$.

\begin{proof}[Proof of Theorem \ref{aw} with non-optimal bound]
Take $n=\lfloor \epsilon^{-1/4} \rfloor$ and define subsets 
\[
I_k = \{ i \in I: (k-1)/n< t_i \le k/n \}, \qquad k=1,\ldots,n.
\]
Then, we apply Corollary \ref{iawc} for each family $\{T_i\}_{i\in I_k}$ for $t=k/n$ to find subsets $I_k' \subset I_k$ such that
\[
\bigg\| \sum_{i \in I'_k} T_i - \frac{k}{n}  \sum_{i\in I_k} T_i \bigg\| = O(\epsilon^{1/2}).
\]
Taking $I_0 = \bigcup_{k=1}^n I'_k$, we have
\begin{align*}
\bigg\| \sum_{i \in I_0} T_i -  \sum_{i\in I} t_i T_i \bigg\| 
& \le 
 \bigg\| \sum_{k=1}^n  \bigg(\sum_{i \in I'_k} T_i - \frac{k}{n}  \sum_{i\in I_k} T_i \bigg) \bigg\| +  \bigg\| \sum_{k=1}^n  \sum_{i\in I_k} (k/n-t_i)T_i \bigg\|
  \\
& \le    \sum_{k=1}^n  \bigg\| \sum_{i \in I'_k} T_i - \frac{k}{n}  \sum_{i\in I_k} T_i \bigg\| + \frac1n \bigg\|\sum_{i\in I} T_i \bigg\| 
\\
& \le n O(\epsilon^{1/2})  + O(\epsilon^{1/4}) =  O(\epsilon^{1/4}). \qedhere
\end{align*}
\end{proof}

To show Theorem \ref{aw} in full strength we shall employ Theorem \ref{kls} instead of Theorem \ref{thmp}.

\begin{proof}[Proof of Theorem \ref{aw}]
We can assume that $\epsilon<1$ since otherwise \eqref{aw0} holds trivially.
As in the proof of Theorem \ref{imp}, we first consider the case when $I$ is finite. For any $i\in I$, choose a sequence of positive finite rank operators $\{T^{(n)}_i\}_{n\in \N}$ such that \eqref{iaw3} holds.
For fixed $n\in \N$ operators $T^{(n)}_i$, $i \in I$, act non-trivially on some finite dimensional subspace $\mathcal K \subset \mathcal H$. Hence, we can identify them with positive semidefinite $d\times d$ matrices, where $d =\dim \mathcal K$.

Let $\xi_i$, $i\in I$, be independent random variables such that $\xi_i$ takes values $0$ and $1$ with probabilities $1-t_i$ and $t_i$, respectively. Since $\var{\xi_i} = t_i(1-t_i) \le 1/4$, by \eqref{aws} and \eqref{iaw3} we have
\[
\begin{aligned}
\sigma^2 &= \max \bigg( \max_{i\in I } \var{\xi_i} \tr(T_i^{(n)})^2, \bigg\| \sum_{i=1}^m \var{\xi_i} \tr(T_i^{(n)}) T_i^{(n)} \bigg\| \bigg)
\\
& \le \max \bigg( \frac 14 \epsilon^2, \frac 14 \epsilon  \bigg\| \sum_{i\in I} T_i \bigg\| \bigg) \le  \frac 14 \epsilon.
\end{aligned}
\]
By Theorem \ref{kls} applied for a finite collection $\{T^{(n)}_i\}_{i\in I}$, there exists a subset $J_n \subset I$ such that
\[
\bigg\| \sum_{i\in J_n} T_i^{(n)}  - \sum_{i\in I} t_i T_i^{(n)}  \bigg\| \le 2 \epsilon^{1/2}.
\]
By pigeonhole principle, for infinitely many $n\in\N$, $J_n$ is some fixed subset $I_0$ of $I$. Taking limit in \eqref{iaw4} as $n\to\infty$ yields 
\begin{equation}\label{kls5}
\bigg\| \sum_{i\in I_0} T_i  - \sum_{i\in I} t_i T_i  \bigg\| \le 2 \epsilon^{1/2}.
\end{equation}
This can be equivalently written as
\begin{equation}\label{kls6}
\sum_{i\in I_0} T_i \le \sum_{i\in I} t_i T_i + 2 \epsilon^{1/2} \mathbf I 
\quad\text{and}\quad
\sum_{i\in I \setminus I_0} T_i \le \sum_{i\in I} (1-t_i) T_i + 2 \epsilon^{1/2} \mathbf I.
\end{equation}

Now suppose that $I$ is countable. We may assume $I=\N$. For any $n\in \N$, we apply the above to the family $\{T_i\}_{i\in [n]}$. This yields  a partition $\{I_n, [n] \setminus I_n\}$ of $[n]$  such that \eqref{kls6} holds with $I_0$ and $I$ replaced by $I_n$ and $[n]$, respectively. To show the existence of a global partition of $\{I_0,I \setminus I_0\}$ of $I$ satisfying \eqref{kls6}, it suffices to apply the pinball principle, see \cite[Lemma 2.7]{MB} or a compactness arguments as in the proof of \cite[Theorem 3.1]{AW}. 
This yields \eqref{kls5}.
\end{proof}

\bibliographystyle{amsplain}

\begin{thebibliography}{99}

\bibitem{AA}
C. Akemann, J. Anderson, 
{\it Lyapunov theorems for operator algebras},
Mem. Amer. Math. Soc. {\bf 94} (1991), no. 458, iv+88 pp. 

\bibitem{AA2}
C. Akemann, J. Anderson, 
{\it The continuous Beck-Fiala theorem is optimal}, Discrete Math. {\bf 146} (1995), no. 1--3, 1--9. 

\bibitem{AW}
C. Akemann, N. Weaver, 
{\it A Lyapunov-type theorem from Kadison-Singer},
Bull. Lond. Math. Soc. {\bf 46} (2014), no. 3, 517--524. 

\bibitem{And}
J. Anderson, 
{\it Extensions, restrictions, and representations of states on $C?$-algebras},
Trans. Amer. Math. Soc. {\bf 249} (1979), no. 2, 303--329. 

\bibitem{AG}
N. Anari, S. Oveis Gharan,
{\it The Kadison-Singer Problem for Strongly Rayleigh Measures and Applications to Asymmetric TSP}, preprint {\tt arXiv:1412.1143}.

\bibitem{BB09}
J. Borcea, P. Br\"and\'en, 
{\it
The Lee-Yang and P\'olya-Schur programs. I. Linear operators preserving stability},
Invent. Math. {\bf 177} (2009), 541--569. 

\bibitem{BB}
J. Borcea, P. Br\"and\'en, 
{\it Multivariate P\'olya-Schur classification problems in the Weyl algebra},
Proc. Lond. Math. Soc. {\bf 101} (2010), 73--104. 

\bibitem{BT1}
J. Bourgain, L. Tzafriri, 
{\it Invertibility of ``large'' submatrices with applications to the geometry of Banach spaces and harmonic analysis},
Israel J. Math. {\bf 57} (1987), no. 2, 137--224. 

\bibitem{BT2}
J. Bourgain, L. Tzafriri, 
{\it Restricted invertibility of matrices and applications}, Analysis at Urbana, Vol. II (Urbana, IL, 1986--1987), 61--107,
London Math. Soc. Lecture Note Ser., 138, Cambridge Univ. Press, Cambridge, 1989. 

\bibitem{BT3}
J. Bourgain, L. Tzafriri, 
{\it On a problem of Kadison and Singer},
J. Reine Angew. Math. {\bf 420} (1991), 1--43. 

\bibitem{MB}
M. Bownik, {\it The Kadison-Singer Problem}, Frames and Harmonic Analysis, 63--92, Contemp. Math., 706, Amer. Math. Soc., Providence, RI, 2018.

\bibitem{MB2} 
M. Bownik, {\it Lypaunov's theorem for continuous frames}, Proc. Amer. Math. Soc. {\bf 146} (2018), no. 9, 3825--3838.

\bibitem{MB3} 
M. Bownik, {\it Continuous frames and the Kadison-Singer problem},  Coherent States and Their Applications - A Contemporary panorama, 63--88, Springer Proceedings in Physics 205, Springer, 2018.

\bibitem{BCMS}
M. Bownik, P. Casazza, A. Marcus, D. Speegle, {\it Improved bounds in Weaver and Feichtinger conjectures}, J. Reine Angew. Math. {\bf 749} (2019), 267--293.

\bibitem{BS}
M. Bownik, D. Speegle, 
{\it The Feichtinger conjecture for wavelet frames, Gabor frames and frames of translates},
Canad. J. Math. {\bf 58} (2006), no. 6, 1121--1143.

\bibitem{Bra07}
P. Br\"and\'en,
{\it Polynomials with the half-plane property and matroid theory}, Adv. Math. {\bf 216} (2007), 302--320. 

\bibitem{Bra}
P. Br\"and\'en, {\it Hyperbolic polynomials and the Kadison-Singer problem}, preprint {\tt arXiv:1809.03255}.

\bibitem{CCLV}
P. Casazza, O. Christensen, A. Lindner, R. Vershynin, 
{\it Frames and the Feichtinger conjecture},
Proc. Amer. Math. Soc. {\bf 133} (2005), no. 4, 1025--1033. 

\bibitem{CE}
P. Casazza, D. Edidin, 
{\it Equivalents of the Kadison--Singer problem},
Function spaces, 123--142,
Contemp. Math., 435, Amer. Math. Soc., Providence, RI, 2007. 

\bibitem{CEKP}
P. Casazza, D. Edidin, D. Kalra, V. Paulsen, 
{\it Projections and the Kadison--Singer problem},
Oper. Matrices {\bf 1} (2007), no. 3, 391--408.

\bibitem{CFTW}
P. Casazza, M. Fickus, J. Tremain, E. Weber,
{\it The Kadison--Singer problem in mathematics and engineering: a detailed account}, Operator theory, operator algebras, and applications, 299--355, Contemp. Math., 414, Amer. Math. Soc., Providence, RI, 2006.


\bibitem{CT}
P. Casazza, J. Tremain, 
{\it The Kadison--Singer problem in mathematics and engineering},
Proc. Natl. Acad. Sci. USA {\bf 103} (2006), no. 7, 2032--2039.


\bibitem{CT2}
P. Casazza and J. Tremain,{\it Consequences of the Marcus/Spielman/Srivastava solution of the Kadison-Singer problem}. New trends in applied harmonic analysis, 191--213, Appl. Numer. Harmon. Anal., Birkh\"auser/Springer, Cham, 2016.


\bibitem{Cohen}
M. Cohen, 
{\it Improved Spectral Sparsification and Kadison-Singer for Sums of Higher-rank Matrices}, talk available at {\tt https://www.birs.ca/events/2016/5-day-workshops/16w5111/videos}


 

\bibitem{FY}
O. Friedland, P. Youssef,
{\it Approximating matrices and convex bodies through Kadison-Singer},
Int. Math. Res. Not. IMRN 2019, no. 8, 2519--2537.




\bibitem{HV}
J. W. Helton, V. Vinnikov, 
{\it Linear matrix inequality representation of sets},
Comm. Pure Appl. Math. {\bf 60} (2007), no. 5, 654--674.


\bibitem{KS}
R. Kadison, I. Singer,
{\it Extensions of pure states},
Amer. J. Math. {\bf 81} (1959), 383--400. 

\bibitem{KLS}
R. Kyng, K. Luh, Z. Song, {\it Four deviations suffice for rank 1 matrices},
 Adv. Math. {\bf 375} (2020), 107366, 17 pp. 

\bibitem{MSS0}
A. W. Marcus, D. A. Spielman, N. Srivastava, 
{\it Interlacing families I: bipartite Ramanujan graphs of all degrees}, Ann. of Math. {\bf 182} (2015), no. 1, 307--325.

\bibitem{MSS}
A. W. Marcus, D. A. Spielman, N. Srivastava, {\it Interlacing Families II: mixed characteristic polynomials and the Kadison--Singer problem}, Ann. of Math. {\bf 182} (2015), no. 1, 327--350.



\bibitem{RL}
M. Ravichandran, J. Leake, 
{\it Mixed determinants and the Kadison-Singer problem},
Math. Ann. {\bf 377} (2020), 511--541. 




\bibitem{Tao}
T. Tao, {\it Real stable polynomials and the Kadison-Singer problem}, blog entry available at {\tt https://\break terrytao.wordpress.com/tag/kadison-singer-problem/}

\bibitem{Tro}
J. Tropp, {\it User-friendly tail bounds for sums of random matrices}, Found. Comput. Math. {\bf 12} (2012), no. 4, 389--434. 

\bibitem{We}
N. Weaver, 
{\it The Kadison--Singer problem in discrepancy theory},
Discrete Math. {\bf 278} (2004), no. 1--3, 227--239. 

\bibitem{XXZ1}
J. Xie, Z. Xu, Z. Zhu, 
{\it Upper and lower bounds for matrix discrepancy},
J. Fourier Anal. Appl. {\bf 28} (2022), no. 6, Paper No. 81, 23 pp. 

\bibitem{XXZ2}
Z. Xu, Z. Xu, Z. Zhu,
{\it Improved bounds in Weaver's \textrm{KS}$_r$ conjecture for high rank positive semidefinite matrices}, preprint {\tt arXiv:2106.09205}.









\end{thebibliography}

\end{document}